\crefname{hypothesis}{Hypothesis}{Hypotheses}
\newtheorem{example}{Example}[section]
\newtheorem{assumption}{Assumption}[section]
\title{Convergence rate analysis of a Dykstra-type projection algorithm
\thanks{Submitted to the editors \today.
\funding{Ting Kei Pong is supported partly by the Hong Kong Research Grants Council PolyU153002/21p.}}}
\author{Xiaozhou Wang\thanks{The Hong Kong Polytechnic University, Hong Kong, People's Republic of China
  (\email{xzhou.wang@connect.polyu.hk}).}
\and   Ting Kei Pong\thanks{The Hong Kong Polytechnic University, Hong Kong, People's Republic of China
  (\email{tk.pong@polyu.edu.hk}).}
}
\def\R{{\rm I\!R}}
\def\Argmin{\mathop{\rm Arg\,min}}
\numberwithin{equation}{section}
\definecolor{blue}{rgb}{0,0,0}
\begin{document}

\maketitle

\begin{abstract}
Given closed convex sets $C_i$, $i=1,\ldots,\ell$, and some nonzero linear maps $A_i$, $i = 1,\ldots,\ell$,
of suitable dimensions, the multi-set split feasibility problem aims at finding a point in $\bigcap_{i=1}^\ell A_i^{-1}C_i$ based on computing projections onto $C_i$ and multiplications by $A_i$ and $A_i^T$.
In this paper, we consider the associated best approximation problem, i.e., the problem of computing projections onto $\bigcap_{i=1}^\ell A_i^{-1}C_i$;
we refer to this problem as the best approximation problem in multi-set split feasibility settings (BA-MSF).
We adapt the Dykstra's projection algorithm, which is classical for solving the BA-MSF in the special case when all $A_i = I$,
 to solve the general BA-MSF. Our Dykstra-type projection algorithm is derived by applying (proximal) coordinate gradient descent to the Lagrange dual problem,
 and it only requires computing projections onto $C_i$ and multiplications by $A_i$ and $A_i^T$ in each iteration. Under a standard relative interior condition and a genericity assumption on the point we need to project,
 we show that the dual objective satisfies the Kurdyka-{\L}ojasiewicz property with an explicitly computable exponent
 on a neighborhood of the (typically unbounded) dual solution set when each $C_i$ is $C^{1,\alpha}$-cone reducible for some $\alpha\in (0,1]$: this class of sets covers the class of $C^2$-cone reducible sets, which include all polyhedrons, second-order cone, and the cone of positive semidefinite matrices as special cases. Using this, explicit convergence rate (linear or sublinear) of the sequence generated by the Dykstra-type
 projection algorithm is derived. Concrete examples are constructed to illustrate the necessity of some of our assumptions.
\end{abstract}

\begin{keywords}
Dykstra's projection algorithm, Kurdyka-{\L}ojasiewicz property, linear convergence, $C^{1,\alpha}$-cone reducibility
\end{keywords}

\begin{MSCcodes}
90C25, 90C30, 90C46, 90C90
\end{MSCcodes}

\section{Introduction}

The multi-set split feasibility (MSF) problems, first introduced by Censor et al.  \cite{Censor-Elfving-Kopf-Bortfeld-2005}, are generalizations of the two-set split feasibility problems \cite{Censor-Elfving-1994} and convex feasibility problems  \cite{Bauschke-Borwein-1996}. These kinds of problems arise naturally in many contemporary application fields such as image reconstruction; see \cite{Censor-Elfving-Kopf-Bortfeld-2005} and references therein.
The MSF problem aims at finding a point in the intersection of the linear preimage of a collection of finitely many closed convex sets, i.e.,
\begin{align}\label{MSF}
\mbox{Find $x\in\R^n$ such that  $A_ix \in C_i$ for $i=1,\ldots, \ell$,}
\end{align}
where $C_i\subseteq \R^{m_i}$, $i = 1,\ldots,\ell$, are closed convex sets and $A_i\in \R^{m_i\times n}$ for each $i$; moreover, the projections onto $C_i$ are assumed to be easy to compute, while computing projections onto the sets $A_i^{-1}C_i$ can be difficult (see \cite{Censor-Elfving-Kopf-Bortfeld-2005}).
The assumptions concerning projections naturally call for iterative schemes that leverage projections onto $C_i$ for solving \eqref{MSF}.
One such scheme is the CQ-algorithm proposed in \cite{Byrne-2002} for solving the MSF problem with $\ell=2$, which was later generalized to solve general MSF problems in \cite{Censor-Elfving-Kopf-Bortfeld-2005}.
{\color{blue}Recent works on solution methods for MSF problems can be found in \cite{Brooke-Censor-Gibali-2020, Censor-Reem-Zaknoon-2022, Yang-Yang-Su-2014, Burke-Curtis-Wang-Wang-2015, Chambolle-Pock-2016}.} In this paper, we focus on a natural but relatively less studied variant of the MSF problem.
Specifically, given a $\bar v\in \R^n$, we consider the problem of finding the point in $\bigcap_{i=1}^\ell A_i^{-1}C_i$ that is closest to $\bar v$.
In other words, we consider the following best approximation problem in multi-set split feasibility settings (BA-MSF):
\begin{equation}\label{problem}
\min_{x\in \R^n}\, f(x):=\frac{1}{2}\|x-\bar{v}\|^2 \quad \text{s.t. $A_i x\in C_i,\, i=1,\ldots, \ell$},
\end{equation}
where  $\bar{v}\in \R^n$ is a given vector, each $A_i\in\R^{m_i\times n}\backslash\{0\}$ and each $C_i$ is a closed convex set; moreover, we assume that
\[
\bigcap_{i=1}^\ell A_i^{-1}C_i\neq \emptyset.
\]

When $A_i = I$ for all $i$, we refer to the corresponding problem~\eqref{problem} as the best approximation (BA) problem.
In this case, a classical solution method is the Dykstra's projection algorithm proposed in \cite{Boyle-Dykstra-1986,Han-1988}.
Each iteration of this algorithm only requires computing projections onto each $C_i$ instead of $\bigcap_{i=1}^\ell C_i$,
which can be advantageous because the  projection onto $\bigcap_{i=1}^\ell C_i$ can be more difficult to compute in general.
One remarkable feature of the Dykstra's projection algorithm is that the sequence generated will converge to the unique solution of the BA problem as long as $\bigcap_{i=1}^\ell C_i\neq \emptyset$;
see \cite{Boyle-Dykstra-1986}. This is in contrast to splitting methods such as Douglas-Rachford splitting, which typically requires additional assumptions to guarantee convergence; see \cite[Corollary~28.3]{Bauschke-Combettes-2017}.
The Dykstra's projection algorithm and its variants have
 been studied extensively in recent years concerning its convergence properties and relations to other algorithms;
see \cite{Bauschke-Borwein-1994, Bauschke-Burachik-Herman-Kaya-2020,  Bauschke-2000, Hundal-Deutsch-1997, Pang-2015, Pang-2019,Gaffke-Mathar-1989,Deutsch-Hundal-1994} and references therein. It is now known that the Dykstra's projection algorithm can be derived as a suitable application of the coordinate gradient descent method to the dual problem of \eqref{problem} with $A_i = I$ for all $i$; see \cite{Han-1988,Gaffke-Mathar-1989}.  Moreover, for BA problems with \emph{polyhedral} $C_i$, it is known that the sequence generated by the Dykstra's projection algorithm converges linearly \cite{Deutsch-Hundal-1994, Luo-Tseng-1993, Tseng-Yun-2009}.
On the other hand, convergence rates in the case of nonpolyhedral $C_i$ are not very well understood.

In this paper, we will adapt the classical Dykstra's projection algorithm to solve \eqref{problem} and analyze the convergence rate of the resulting algorithm. Following \cite{Gaffke-Mathar-1989, Han-1988}, we derive our Dykstra-type projection algorithm by applying (proximal) coordinate gradient descent to a suitable Lagrange dual problem of \eqref{problem}, and each iteration of our algorithm only requires projections onto $C_i$ and multiplications by $A_i$ and $A_i^T$.\footnote{Our algorithm reduces to the classical Dykstra's projection algorithm when $A_i = I$ for all $i$.} Then, by imposing the standard relative interior condition $\bigcap_{i=1}^\ell A_i^{-1}{\rm ri}\,C_i\neq \emptyset$ and a genericity assumption on the point $\bar v$, we show that the objective of the dual problem satisfies Kurdyka-{\L}ojasiewicz (KL) property with exponent $1/(\alpha+1)$ for some $\alpha \in (0,1]$ on a neighborhood of the (typically unbounded) dual solution set when each $C_i$ is \emph{$C^{1,\alpha}$-cone reducible}. Based on this, we establish the linear or sublinear convergence of the sequences generated by the Dykstra-type
 projection algorithm depending on the value of $\alpha\in (0,1]$. The key novelty of our work lies in \emph{both} the convergence rate analysis and the identification of the $C^{1,\alpha}$-cone reducibility condition:
\begin{itemize}
  \item First, our convergence rate results do not follow directly from standard KL-based convergence analysis frameworks such as those in \cite{Attouch-Bolte-2009,Attouch-Bolte-Redont-Soubeyran-2010,Attouch-Bolte-Svaiter-2013}. Indeed, the sequences generated by the Dykstra-type projection algorithm can be unbounded in general while the standard convergence rate analysis based on KL property typically requires the boundedness of the sequence generated. Our analysis is made possible thanks to the fact that, under the assumption $\bigcap_{i=1}^\ell A_i^{-1}{\rm ri}\,C_i\neq \emptyset$, the solution set of the dual problem can be written as the sum of a compact convex set and a {\em subspace}, a key fact which was also established in \cite{Auslender-Cominetti-Crouziex-1993} for BA problem, i.e., \eqref{problem} with each $A_i = I$.
  \item Second, we would like to point out that $C^{1,\alpha}$-cone reducible (with $\alpha\in (0,1]$) sets are prevalent in applications. Indeed, it covers the notion of ${C^2}$-cone reducible sets introduced in \cite{Shapiro-2003},\footnote{This notion was known as cone reducibility in \cite{Shapiro-2003}, and as $C^2$-cone reducibility in \cite[Definition~3.135]{Bonnans-Shapiro-2000}. Here, we adopt the latter terminology to highlight the differentiability property.} which contains sets such as polyhedrons, second-order cone and positive semidefinite cone.
     {\color{blue}In addition, as we will see later, the $p$-norm ball with $p\in (1,+\infty)$ can be shown to be $C^{1,\alpha}$-cone reducible with $\alpha = \min\{p-1,1\}$; see Example~\ref{c-1a-cone} below.
      Furthermore, when $p\in(1,2)$,  the $p$-norm ball is $C^{1,\alpha}$-cone reducible with $\alpha = p-1$  but not $C^2$-cone reducible (see Remark~\ref{remark-C1a}),
      showing that the class of $C^{1,\alpha}$-cone reducible sets is strictly larger than the class of $C^2$-cone reducible sets.}
      Our result is thus applicable to analyzing our Dykstra-type projection algorithm for a wide range of sets. Moreover, since our Dykstra-type projection algorithm reduces to the classical Dykstra's projection algorithm when $A_i = I$ for all $i$, we can deduce convergence rate results for the classical Dykstra's projection algorithm on a large class of \emph{nonpolyhedral} $C_i$, whose convergence rate was previously unknown.
\end{itemize}

The paper is organized as follows. Section~\ref{sec2} presents  some preliminary materials.
In section~\ref{sec3}, we adapt the classical Dykstra's projection algorithm to solve the BA-MSF problem \eqref{problem} and present some basic convergence properties.
Our key contributions are in sections~\ref{sec4} and \ref{sec5}, where we establish the KL property of the dual objective under suitable assumptions, and use that to study the convergence rate of our Dykstra-type projection algorithm. We also present concrete examples to show the indispensability of some of our assumptions in deriving the error bound and convergence rate results.

\section{Notation and preliminaries}\label{sec2}

In this paper, we use $\R^n$ to denote the $n$-dimensional  Euclidean space. For an $x\in \R^n$, we let $\|x\|_p=\sqrt[p]{|x_1|^p+\cdots+|x_n|^p}$ denote the $p$-norm, where $p\in [1,\infty)$; we also use $\|x\|$ to denote the 2-norm for notational simplicity. For $x$ and $y\in\R^n$, we use $\langle x,y \rangle$ to denote their inner product. For a matrix $A\in \R^{m\times n}$, we use $
\|A\|$ to denote its operator norm. We use $I$ to denote the identity matrix, whose dimension should be clear from the context. We also use $B(x,\eta)$  to  denote  a  closed  ball  centered  at $x$ with radius $\eta
\ge 0$,  i.e., $B(x,\eta) =\{ u:\| u - x\| \leq \eta\}$.

An extended-real-valued  function $h:\R^n\rightarrow (-\infty, \infty]$ is said to be proper if  ${\rm dom }\, h:=\{x\in\R^n : h(x)<\infty \}$ is nonempty.
A proper function is said to be closed if it is lower semicontinuous.
For  a proper  convex function $h$,  its subdifferential $\partial h(x)$ at an $x\in \R^n$  is defined as
\[
\partial h(x)=\{ \xi\in\R^n : h(y)\geq h(x)+\langle\xi, y-x\rangle\ \mbox{for all } y\in \R^n \}.
\]
The domain of $\partial h$ is defined as ${\rm dom}\,\partial h:=\{ x\in \R^n: \partial h(x)\neq \emptyset\}$, and the convex conjugate of $h$ is given by
\[
h^*(y)=\sup\{\langle x, y\rangle-h(x) : x\in \R^n  \}.
\]
For a proper closed convex function $h$, we have the following equivalences concerning $\partial h$ and $h^*$ (see, for example, \cite[Proposition 11.3]{Rockafellar-Wets-2009}):
\begin{align}\label{young}
h(x)+h^*(y)=\langle x, y\rangle \Longleftrightarrow y\in \partial h(x)\Longleftrightarrow x \in \partial h^*(y).
\end{align}
The proximal operator of a proper closed convex function $h$ at an $x\in \R^n$ is defined as
\[
{\rm prox}_{h}(x) := \Argmin_{u\in \R^n}\left\{\frac12\|u - x\|^2 + h(u)\right\};
\]
recall that the above set of minimizers is a singleton for every $x\in \R^n$, and ${\rm prox}_h:\R^n\to \R^n$ is nonexpansive. Finally, for a proper closed convex function $h$ with $\Argmin h \neq \emptyset$, the following inequality holds (see \cite[Proposition~10.59]{Rockafellar-Wets-2009}):
\begin{align}\label{VA}
h(x)-\inf h\leq {\rm dist}(0,\partial h(x)){\rm dist}(x, \Argmin h)\ \ \ \mbox{for all } x\in \R^n,
\end{align}
where ${\rm dist}(x,C):=\inf\{\|x-y\| : y\in C\}$ is the distance of an $x\in\R^n$ to a set $C$.

For a nonempty closed convex set $C\subseteq \R^n$, we use ${\rm int}\,C$ and ${\rm ri}\,C$ to represent the interior and relative interior  of $C$, respectively.
The indicator function and support function of such a $C$ are respectively defined as
\begin{align*}
\delta_C(x)=
\begin{cases}
  0 & \mbox{if $x\in C$,}  \\
  +\infty & \mbox{otherwise},
\end{cases}\quad\quad {\rm and}\quad\quad
 \sigma_C(x)=\sup\{\langle x, y\rangle : y\in C  \}.
\end{align*}
The normal cone of a nonempty closed convex set $C$ at $x\in C$ is defined as
\[
\mathcal{N}_C(x):= \partial \delta_C(x) = \{ v\in\R^n : \langle v, u-x\rangle \leq 0\ \mbox{for all } u\in C\},
\]
and the projection of  $x\in\R^n$ onto $C$ is denoted by ${\rm Proj}_C(x):= {\rm prox}_{\delta_C}(x)$.
For a closed convex cone $K\subseteq \R^n$, its polar is defined as $K^\circ=\{ u\in \R^n : \langle u, x\rangle \leq 0\ \mbox{for all } x\in K\}$.

Next, we recall the following definition of KL property, which is important for analyzing convergence rate of various first-order methods;  see, for example,
\cite{Attouch-Bolte-2009, Attouch-Bolte-Redont-Soubeyran-2010, Attouch-Bolte-Svaiter-2013, Bolte-Sabach-Teboulle-2014, Li-Pong-2018}.\!\!\!\!\!
\begin{definition}[KL property and exponent]
A proper closed convex function $h:\R^n\rightarrow (-\infty,\infty]$  is said to  satisfy the KL property at $\hat{x}\in{\rm dom}\,\partial h$ if there are  $c\in(0,\infty]$, a neighborhood
$U$ of $\hat{x}$ and a continuous concave function $\varphi:[0,c)\rightarrow [0,\infty)$ with $\varphi(0)=0$ such that
\begin{enumerate}[{\rm (i)}]
  \item $\varphi$ is continuously differentiable on $(0,c)$ and $\varphi'>0$ on $(0,c)$;
  \item $\varphi'(h(x)-h(\hat{x})){\rm dist}\,(0,\partial h(x))\geq 1$
whenever $x\in U \cap \{x: h(\hat{x})< h(x)< h(\hat{x})+c\}$.
\end{enumerate}
 If $h$ has KL property at $\hat{x}$ with the function $\varphi(v)=\alpha_0 v^{1-\theta}$ for some $\alpha_0 > 0$ and $\theta\in[0,1)$, then we say that $h$ satisfies the KL property at $\hat{x}$ with exponent $\theta$.

 A proper closed convex function $h$ satisfying the KL property with exponent $\theta\in [0,1)$ at every point in ${\rm dom}\,\partial h$ is called a KL function with
exponent $\theta$.
\end{definition}

KL property with exponent $\theta\in [0,1)$ is closely related to other notions of error bounds. For example, according to \cite[Theorem 5]{Bolte-Nguyen-Peypouquet-Suter-2017}, a proper closed convex function $h$ satisfies the KL property at an $\bar{x}\in \Argmin h$ with exponent $\theta\in[0,1)$ if and only if there exist positive constants ${c}_1$ and ${c}_2$ and a neighborhood  $U_{\bar{x}}$ of $\bar{x}$ such that
\[
c_1 {\rm dist}(x, \Argmin h)\le (h(x)-h(\bar{x}))^{1-\theta}\ \ \mbox{for all } x\in U_{\bar{x}} \cap \{x: h(\bar{x})< h(x)< h(\bar{x})+{c}_2\}.
\]

Before ending this section, we briefly review the (classical) Dykstra's projection algorithm proposed in \cite{Boyle-Dykstra-1986, Han-1988}.
The Dykstra's projection algorithm  was  developed to solve the following BA problem
\begin{equation}\label{problem-BA}
\min_{x\in \R^n}\frac{1}{2}\|x-\bar{v}\|^2 \quad \text{s.t. $x\in C_i,\, i=1,\ldots, \ell$},
\end{equation}
where each $C_i$ is a closed convex set and $\bigcap_{i=1}^\ell C_i\neq \emptyset$. Clearly, Problem~\eqref{problem-BA} is a special instance of \eqref{problem} with $A_i = I$ for all $i$. The Dykstra's projection algorithm is presented as Algorithm~\ref{alg-Dykstra-original} below. Note that it makes use of ${\rm Proj}_{C_i}$ in each iteration.
 \begin{algorithm}[h]
\caption{Dykstra's projection algorithm for \eqref{problem-BA}} \label{alg-Dykstra-original}
\begin{enumerate}[\textbf{Step} 1.]
  \item  Choose  $y^{0}_1=\cdots=y^0_\ell=0\in\R^n$ and $ x^0 = x^0_\ell=\bar{v}\in\R^n$.
  Set $t=0$.
  \item Set $x^{t+1}_0=x^t_\ell$. For $i=1,\ldots, \ell$, compute
\begin{equation*}\displayindent-15pt\displaywidth\textwidth
\begin{split}
& x^{t+1}_i= {\rm Proj}_{C_i}(y^t_i+x^{t+1}_{i-1}),\\
& y^{t+1}_i=y^t_i+ x^{t+1}_{i-1}-{\rm Proj}_{C_i}(y^t_i+x^{t+1}_{i-1}).
 \end{split}
\end{equation*}
  \item  Set ${x}^{t+1}=x^{t+1}_\ell$.  Update $t\leftarrow t+1$ and go to Step 2.
\end{enumerate}
\end{algorithm}

It was shown in \cite{Gaffke-Mathar-1989, Han-1988} that Algorithm~\ref{alg-Dykstra-original} is equivalent to a (proximal) coordinate gradient descent (CGD) method for solving the (negative of the) dual problem of \eqref{problem-BA}, which is given as
\begin{align*}
\min_{y_1,\ldots,y_\ell} {\frac{1}{2}\left\|\sum_{i=1}^\ell  y_i-\bar{v}\right\|^2-\frac{1}{2}\|\bar{v}\|^2}+\sum_{i=1}^\ell\sigma_{C_i}(y_i);
\end{align*}
Indeed, with $y^0_1=\cdots=y^0_\ell=0$, for $i=1,\ldots, \ell$, one can show that the $y^{t+1}_i$ in Algorithm \ref{alg-Dykstra-original} can be equivalently obtained  as
\begin{align*}
y^{t+1}_i := \Argmin_{y_i\in \R^{m_i}}\left\{ \left\langle \sum_{j=1}^{i-1}  y^{t+1}_j+\sum_{j=i}^{\ell}  y^{t}_j-\bar{v}, y_i-y^t_i\right\rangle+\frac{1}{2}\|y_i-y^t_i\|^2+\sigma_{C_i}(y_i)\right\}.
\end{align*}
The following theorem collects some known convergence results of Algorithm~\ref{alg-Dykstra-original}.
\begin{theorem}[Convergence properties of Algorithm \ref{alg-Dykstra-original}]\label{the-dykstra-ori}
Consider \eqref{problem-BA}.  Let $\{x^t\}$ be generated by Algorithm \ref{alg-Dykstra-original}. Then the following statements hold.
\begin{enumerate}[{\rm (i)}]
  \item It holds that $\lim_{t\rightarrow +\infty} x^t=x^*$, where $x^*$ is the unique solution of \eqref{problem-BA}.
  \item If each $C_i$ is polyhedral, then there exist $a_1>0$, $a_0\in (0,1)$ and a positive integer $\bar{t}$ such that
   \[
  \|x^t-x^*\|\leq a_1 a_0^t, \quad \forall\, t\geq \bar{t}.
  \]
\end{enumerate}
\end{theorem}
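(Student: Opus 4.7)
The plan is to leverage the dual reformulation that the excerpt already spelled out: Algorithm~\ref{alg-Dykstra-original} coincides with a cyclic (proximal) coordinate gradient descent (CGD) applied to the dual objective
\[
D(y_1,\ldots,y_\ell)=\frac{1}{2}\Big\|\textstyle\sum_{i=1}^\ell y_i-\bar v\Big\|^2-\frac{1}{2}\|\bar v\|^2+\sum_{i=1}^\ell\sigma_{C_i}(y_i),
\]
together with the primal-dual identification
$x_\ell^{t+1}=\bar v-\sum_{i=1}^\ell y_i^{t+1}$,
which is easily verified by induction using the Moreau decomposition ${\rm prox}_{\sigma_{C_i}}+{\rm Proj}_{C_i}={\rm Id}$ and the update rule for $y_i^{t+1}$ shown in the excerpt. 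Under $\bigcap_i C_i\neq\emptyset$, Fenchel duality gives strong duality, so $D$ attains its infimum and the associated primal point is the unique minimizer $x^*$ of~\eqref{problem-BA}.

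For part~(i), I would first observe that each inner step is a proximal gradient step with stepsize~$1$ on the $1$-smooth coordinate block, so the sufficient decrease inequality
\[
D(y^{t+1})\le D(y^t)-\tfrac{1}{2}\sum_{i=1}^\ell\|y_i^{t+1}-y_i^t\|^2
\]
holds and yields $\sum_{t,i}\|y_i^{t+1}-y_i^t\|^2<\infty$. The subtle point is that $\{y^t\}$ itself need not be bounded even when $x^*$ is unique, so one cannot pass to a convergent subsequence directly. Instead, I would translate the argument to the primal side by combining the optimality condition $y_i^{t+1}\in\mathcal{N}_{C_i}(x_i^{t+1})$ with telescoping of $\tfrac12\|x_i^{t+1}-x^*\|^2$ along the cycle $i=1,\ldots,\ell$. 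Because $x^*\in C_i$ and $y_i^{t+1}\in\mathcal{N}_{C_i}(x_i^{t+1})$, each projection step contracts toward $x^*$ up to a controlled cross term, and summing over $t$ together with the square-summability of the dual increments forces $x_i^{t+1}\to x^*$ and $y_i^{t+1}-y_i^t\to 0$. This is essentially the Boyle--Dykstra argument, and it gives $x^t\to x^*$ despite the possible unboundedness of $\{y^t\}$.

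For part~(ii), the polyhedrality of each $C_i$ makes $\sigma_{C_i}$ a polyhedral convex function, so $D$ is the sum of a convex quadratic and a separable polyhedral term. I would invoke a Hoffman-type/piecewise-linear error bound to conclude that on any sublevel set of $D$ there exist constants such that
\[
{\rm dist}(y,\Argmin D)\le \kappa\,{\rm dist}(0,\partial D(y)),
\]
equivalently (by the $\varphi(v)=\alpha_0\sqrt{v}$ reformulation) that $D$ has the KL property with exponent $1/2$ on the relevant level set. Combining this error bound with the sufficient decrease inequality above in the standard Luo--Tseng/CGD framework (cf.\ \cite{Luo-Tseng-1993,Tseng-Yun-2009,Deutsch-Hundal-1994}) gives Q-linear decay $D(y^t)-\inf D\le C\rho^t$ for some $\rho\in(0,1)$, which by the primal identity $x^t=\bar v-\sum_i y_i^t$ and a standard duality argument transfers to $\|x^t-x^*\|\le a_1 a_0^t$.

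The main obstacle in both parts is the potential unboundedness of the dual iterates, which prevents a straightforward application of compactness-based KL convergence theory. In part~(i) it is handled by switching to Fejer-type monotonicity of the \emph{primal} sequence relative to $x^*$, and in part~(ii) it is handled by noting that the polyhedral Hoffman error bound is global (does not require the iterates themselves to stay bounded; only the dual \emph{residual} matters), so linear convergence of $D(y^t)$ persists even if $\{y^t\}$ drifts along a recession direction of $\Argmin D$.
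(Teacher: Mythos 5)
Your proposal is essentially correct and follows the same route as the paper, whose ``proof'' of this theorem is purely by citation: item (i) is attributed to Boyle--Dykstra \cite{Boyle-Dykstra-1986} (the primal Fej\'{e}r-type/telescoping argument you sketch is exactly that proof), and item (ii) to the Luo--Tseng/Tseng--Yun error-bound framework \cite{Luo-Tseng-1993,Tseng-Yun-2009} combined with the dual CGD interpretation, which is precisely what the paper later carries out in detail in Theorem~\ref{the-rate-polyhedral}. One imprecision worth fixing: the claim that $\bigcap_{i=1}^\ell C_i\neq\emptyset$ alone yields attainment of $\inf D$ is false in general (zero duality gap holds, but the dual infimum need not be attained without, e.g., $\bigcap_{i=1}^\ell {\rm ri}\,C_i\neq\emptyset$); this does no harm here because part (i) does not use dual attainment and in part (ii) attainment follows from polyhedrality via the Frank--Wolfe theorem, as the paper notes in the proof of Theorem~\ref{the-rate-polyhedral}, but it should not be asserted as a consequence of strong duality.
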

\begin{proof}
Item (i) was established in \cite{Boyle-Dykstra-1986}. Item (ii) can be deduced from \cite{Luo-Tseng-1993, Tseng-Yun-2009}; see also Theorem~\ref{the-rate-polyhedral} below.
\end{proof}

\section{A Dykstra-type projection algorithm for BA-MSF problems} \label{sec3}
In this section, we describe how  Algorithm \ref{alg-Dykstra-original} can be adapted to solve the BA-MSF problem \eqref{problem} and discuss some basic convergence properties of the resulting algorithm. We call the resulting algorithm a Dykstra-type projection algorithm.
From now on, for notational simplicity, we write
\begin{align}\label{A-D}
 {\bm A}\in\R^{m\times n} \mbox{ with } {\bm A}^T:=
 \begin{bmatrix}
   A_1^T& \cdots &A_\ell^T
 \end{bmatrix}  \quad \text{and}\quad D:=C_1\times\cdots\times C_\ell\subseteq\R^{m},
\end{align}
where $m=\sum_{i=1}^\ell m_i$; moreover, we denote the elements in $\R^m$ in boldface, i.e., ${\bm y} = (y_1,\ldots,y_\ell)$ with ${\bm y}\in \R^m$ and $y_i \in \R^{m_i}$ for $i = 1,\ldots,\ell$.

Following the development in \cite{Gaffke-Mathar-1989}, by considering a CGD method for solving the dual problem of \eqref{problem}, we present a Dykstra-type projection algorithm for solving the BA-MSF problem \eqref{problem}. To this end, {\color{blue}we first define $\psi:\R^{n+m}\rightarrow(-\infty,+\infty]$ as follows
\begin{align}\label{psi}
\psi (x,{\bm y}):=f(x)+\delta_D({\bm A}x+{\bm y}).
\end{align}
Then it holds that for every $\bar{x}\in \R^n$ and $\bar{{\bm y}}\in \R^m$
\begin{align}
\psi^*(\bar{x},\bar{{\bm y}})&=\sup_{x,{\bm y}}\{ \langle (\bar{x}, \bar{{\bm y}}), (x,{\bm y})\rangle-f(x)-\delta_D({\bm A}x+{\bm y}) \} \notag\\
&\!=\!\sup_{x,{\bm y}}\{ \langle -{\bm A}^T \bar{{\bm y}}\!+\! \bar{x}, x\rangle-f(x)+ \langle \bar{{\bm y}}, \mathbf{A}x\!+\!{\bm y} \rangle-\delta_D({\bm A}x\!+\!{\bm y}) \}\notag\\
&\!=\!\sup_{x}\{ \langle -{\bm A}^T \bar{{\bm y}}\!+\! \bar{x}, x\rangle-f(x)\}+ \sup_{{\bm z}}\{\langle \bar{{\bm y}}, {\bm z} \rangle-\delta_D({\bm z}) \}\notag\\
&\!=\! f^*(-{\bm A}^T \bar{{\bm y}}\!+\!\bar{x})+\sigma_D(\bar{{\bm y}}).   \label{psi-conjugate}
\end{align}
Therefore, in view of \cite[Theorem 31.2]{Rockafellar-1970} and \eqref{psi-conjugate}, the dual problem of \eqref{problem} is
 \begin{align}\label{problem-dual-psi}
  \max_{{\bm y}\in\R^m}   -\psi^*(0,{\bm y}).
  \end{align}
Since $f^*(x)=\frac{1}{2}\|x+\bar{v}\|^2-\frac{1}{2}\|\bar{v}\|^2$, \eqref{problem-dual-psi} can be further written as
  }
\begin{align}\label{problem-dual}
\max\limits_{{\bm y}\in\R^m} -{\frac{1}{2}\left\|\sum_{i=1}^\ell A_i^T y_i-\bar{v}\right\|^2+\frac{1}{2}\|\bar{v}\|^2}-\sum_{i=1}^\ell\sigma_{C_i}(y_i).
\end{align}
Note that using the definition of ${\bm A}$ and $D$ in \eqref{A-D}, we can rewrite the above dual problem equivalently as
\begin{align}\label{problem-duald}
d^*:= \min_{{\bm y}\in\R^m} d({\bm y}),\ \ \ {\rm where} \ d({\bm y}):=\underbrace{\frac{1}{2}\left\| {\bm A}^T {\bm y}-\bar{v}\right\|^2-\frac{1}{2}\|\bar{v}\|^2}_{g({\bm y})}+\sigma_{D}({\bm y}).
\end{align}
The following proposition states that the duality gap is always zero between \eqref{problem} and its dual problem \eqref{problem-dual}.
This justifies the rationale of solving \eqref{problem} by considering \eqref{problem-duald}.\!\!\!\!
\begin{proposition}\label{prop-primal-dual}
Consider \eqref{problem}. Let the function $d$ be given in \eqref{problem-duald}. Then it holds that ${\rm inf} \{f(x): x\in \bigcap_{i=1}^\ell A_i^{-1}C_i\}=-d^*< \infty$.
Moreover, for any ${\bm y}^*=(y_1^*,\ldots,y_\ell^*)\in \Argmin d$, it holds that
\[
x^*=\bar{v}-\sum_{i=1}^\ell A_i^Ty_i^*=\bar{v}-{\bm A}^T {\bm y}^*,
\]
where $x^*$ is the unique solution of \eqref{problem}.
\end{proposition}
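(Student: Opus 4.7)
The plan is to establish strong duality $\inf f = -d^*$ via a lower semicontinuity argument on the Fenchel value function, and then read off the representation $x^* = \bar v - {\bm A}^T{\bm y}^*$ from a direct algebraic identity relating $f$ and $d$.

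First I would verify the key identity that for every $x\in\R^n$ and ${\bm y}\in\R^m$,
\begin{equation*}
  f(x) + d({\bm y}) = \tfrac{1}{2}\|x + {\bm A}^T{\bm y} - \bar v\|^2 + \bigl[\sigma_D({\bm y}) - \langle {\bm A}x, {\bm y}\rangle\bigr];
\end{equation*}
this follows from expanding the squares in the definitions of $f$ and $d$ and noting the cancellation $\langle x,{\bm A}^T{\bm y}\rangle = \langle {\bm A}x, {\bm y}\rangle$. When ${\bm A}x \in D$, the bracketed quantity is nonnegative by the very definition of the support function, so the whole right-hand side is $\ge 0$, giving the weak-duality inequality $\inf f \ge -d^*$. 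Since $f$ is strongly convex and the feasible set is nonempty and closed, the primal is attained at a unique $x^*$; hence $\inf f < \infty$ and so $-d^* < \infty$ as well.

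For the nontrivial direction $\inf f \le -d^*$, I would introduce the Fenchel value function $v(u) := \inf_{x}\psi(x,u)$. The conjugate computation already carried out in \eqref{psi-conjugate} gives $v^*({\bm y}) = \psi^*(0,{\bm y}) = d({\bm y})$, so the biconjugate satisfies $v^{**}(0) = \sup_{{\bm y}}\{-d({\bm y})\} = -d^*$, and strong duality $v(0) = v^{**}(0)$ reduces to showing $v$ is lower semicontinuous at $0$. I would verify this by a compactness argument: for any $u_k \to 0$ with $v(u_k) \to \alpha < \infty$, strong convexity of $f$ and closedness of each $\Omega_k := {\bm A}^{-1}(D - u_k)$ imply that the projection $x_k := {\rm Proj}_{\Omega_k}(\bar v)$ exists and satisfies $f(x_k) = v(u_k) \to \alpha$, so $\{x_k\}$ is bounded; any cluster point $x_\infty$ must satisfy ${\bm A}x_\infty \in D$ by closedness of $D$, yielding $v(0) \le f(x_\infty) \le \alpha$.

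Finally, for any ${\bm y}^* \in \Argmin d$, strong duality gives $f(x^*) + d({\bm y}^*) = \inf f + d^* = 0$; substituting $(x^*,{\bm y}^*)$ into the identity from the second paragraph and using nonnegativity of both summands on its right-hand side (since $x^*$ is primal feasible), I conclude that each summand must vanish, and the vanishing of the squared-norm term is precisely the desired $x^* = \bar v - {\bm A}^T{\bm y}^*$. The main obstacle is the lower semicontinuity verification: since no relative interior (Slater-type) constraint qualification is imposed on \eqref{problem}, strong duality does not follow from a plain citation of Fenchel-Rockafellar, and the compactness argument leveraging the strong convexity of $f$ is essential.
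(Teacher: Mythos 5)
Your proof is correct, but it follows a genuinely different route from the paper's on both halves of the statement. For the zero duality gap, the paper simply cites Rockafellar's closed-bifunction duality theorem \cite[Theorem 30.4(i)]{Rockafellar-1970}, using the attainment of the primal optimum; you instead prove the gap is zero by hand, showing the value function $u\mapsto\inf_x\psi(x,u)$ is proper (it is nonnegative), convex, and lower semicontinuous at ${\bm 0}$ via the boundedness of the minimizers ${\rm Proj}_{\Omega_k}(\bar v)$ --- the same underlying mechanism (strong convexity of $f$ forcing attainment/compactness) made explicit and self-contained; you are right that no Slater-type condition is available at this stage, so the compactness argument, or a citation that exploits attainment, is genuinely needed. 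For the representation $x^*=\bar v-{\bm A}^T{\bm y}^*$, the paper argues independently of strong duality: it starts from ${\bm 0}\in\partial d({\bm y}^*)$ and runs a chain of normal-cone inclusions ($y_i^*\in\mathcal{N}_{C_i}(A_i\hat x)$, hence $A_i^Ty_i^*\in\mathcal{N}_{A_i^{-1}C_i}(\hat x)$, hence $\bar v-\hat x\in\mathcal{N}_{\bigcap_i A_i^{-1}C_i}(\hat x)$) to recognize $\hat x:=\bar v-{\bm A}^T{\bm y}^*$ as the projection of $\bar v$; you instead use complementary slackness, noting that your identity
\begin{equation*}
f(x)+d({\bm y})=\tfrac12\|x+{\bm A}^T{\bm y}-\bar v\|^2+\bigl[\sigma_D({\bm y})-\langle{\bm A}x,{\bm y}\rangle\bigr]
\end{equation*}
(which checks out) together with $f(x^*)+d({\bm y}^*)=0$ forces both nonnegative terms to vanish. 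Your version is more elementary, avoids subdifferential calculus, and yields the complementarity $\sigma_D({\bm y}^*)=\langle{\bm A}x^*,{\bm y}^*\rangle$ as a by-product, at the cost of making the second conclusion depend on the first; the paper's version buys a direct variational characterization of $\hat x$ that does not presuppose the zero gap. The only small points you should make explicit are that $\Omega_k\neq\emptyset$ for large $k$ (immediate from $v(u_k)<\infty$) and that properness of $v$ is what lets you pass from lower semicontinuity at ${\bm 0}$ to $v({\bm 0})=v^{**}({\bm 0})$; neither is a gap.
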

\begin{proof}
{\color{blue}
For the $\psi$ defined in \eqref{psi}, notice that ${\bm y}\mapsto\psi(\cdot,{\bm y})$  is a proper closed convex bifunction.\footnote{{\color{blue}See \cite[Pages 291-292]{Rockafellar-1970} for the definition of a bifunction. The bifunction  ${\bm y}\mapsto\psi(\cdot,{\bm y})$ is said to be proper closed convex if its
graph function $(x,{\bm y})\mapsto \psi(x,{\bm y})$ is so; see \cite[Pages 293]{Rockafellar-1970}}} Moreover, the primal problem (i.e., $\min_{x\in \R^n}\psi(x,{\bm 0})$) has a unique solution thanks to $\bigcap_{i=1}^\ell A_i^{-1}C_i\neq \emptyset$. Then we can deduce from \cite[Theorem 30.4(i)]{Rockafellar-1970} that the duality gap is zero, i.e.,
\[
 \sup_{\bm y}-\psi^*(0,{\bm y})=\inf_x \psi(x,{\bm 0}) =\inf  \left\{ f(x): x\in \bigcap_{i=1}^\ell A_i^{-1}C_i\right\}=-d^*<\infty.
\]
 }

Next, if ${\bm y}^*=(y_1^*,\ldots,y_\ell^*)\in \Argmin d$, then we have from Theorem~10.1 and Exercise 8.8 of \cite{Rockafellar-Wets-2009} that
\[
{\bm 0} \in \partial d({\bm y}^*) = {\bm A}({\bm A}^T {\bm y}^* - \bar v) + \partial \sigma_D({\bm y}^*).
\]
The above display together with \cite[Example~11.4]{Rockafellar-Wets-2009} implies
\[
{\bm y^*}\in {\cal N}_D({\bm A}(\bar{v}-{\bm A}^T {\bm y}^*)).
\]
Write $\hat x:= \bar{v}-{\bm A}^T {\bm y}^*$ for notational simplicity. Then we have, in view of \cite[Proposition~10.5]{Rockafellar-Wets-2009}, that $y^*_i \in {\cal N}_{C_i}(A_i\hat x)$ for all $i$. Hence,
we have $
A_i^Ty_i^* \in A_i^T{\cal N}_{C_i}(A_i\hat x)\subseteq {\cal N}_{A_i^{-1}C_i}(\hat x)
$ for all $i$,
where the set inclusion follows directly from the definition of normal cone. Consequently, it holds that
\[
\bar v - \hat x = {\bm A}^T {\bm y}^* = \sum_{i=1}^\ell A_i^Ty_i^* \subseteq \sum_{i=1}^\ell {\cal N}_{A_i^{-1}C_i}(\hat x)\subseteq {\cal N}_{\bigcap_{i=1}^\ell A_i^{-1}C_i}(\hat x),
\]
where the last set inclusion follows directly from definition. The above display shows that $\hat x$ is the projection of $\bar v$ onto $\bigcap_{i=1}^\ell A_i^{-1}C_i$, and hence we have $\hat x = x^*$ as desired.
\end{proof}

Now, we derive the Dykstra-type projection algorithm by applying the (proximal) CGD method to solve \eqref{problem-duald}.
Set the starting point
\begin{align}\label{initial-point}
{\bm y}^0=(y_1^0,\ldots, y_\ell^0)=(0,\ldots,0).
\end{align}
For any $t\ge 0$ and any $i\in \{1,\ldots,\ell\}$, for notational simplicity, write
\begin{align}\label{y-tilde}
\tilde{{\bm y}}^{t+1}_{i-1}:=(y^{t+1}_1,\ldots,y^{t+1}_{i-1}, y^t_i,\ldots, y^t_\ell).
\end{align}
By applying the (proximal) CGD in a cyclic order to \eqref{problem-duald} with a proximal term induced by $\gamma_i I -  A_iA^T_i$, where ${\gamma}_i= \lambda_{{\rm max}}( A_i A^T_i )$, we obtain the following update formula for $y_i$, $i = 1,\ldots,\ell$:
 \begin{align}
 y^{t+1}_i :=& \Argmin_{y_i\in \R^{m_i}}\left\{ \langle \nabla_{y_i} g(\tilde{{\bm y}}^{t+1}_{i-1}), y_i-y^t_i\rangle+\frac{\gamma_i}{2}\|y_i-y^t_i\|^2+\sigma_{C_i}(y_i)\right\}. \label{coordinate}
 \end{align}
Following a similar  argument  in \cite[Pages 33-34]{Gaffke-Mathar-1989} and using \eqref{initial-point}-\eqref{coordinate},
one can derive the Dykstra-type projection algorithm for solving \eqref{problem} as we present in Algorithm \ref{alg-Dykstra}. In particular, one can show by induction that
\begin{equation}\label{xtdefinition}
x^t:= x^t_\ell=\bar{v}-{\bm A}^T{\bm y}^t\ \ \mbox{for all }t\ge 0.
\end{equation}
The detailed derivation is given in Appendix \ref{app-derivation}.

\begin{algorithm}[h]
\caption{Dykstra-type projection algorithm for  BA-MSF problem \eqref{problem}} \label{alg-Dykstra}
\begin{enumerate}[\textbf{Step} 1.]
  \item  Choose  $y^{0}_i=0\in\R^{m_i}$ for $i = 1,\ldots,\ell$, $x^0=\bar{v}\in\R^n$ and set ${\gamma}_i= \lambda_{{\rm max}}(A_i^T A_i)$ for $i=1,\ldots, \ell$.
  Set ${\bm y}^0=(y^0_1,\ldots,y^0_\ell)$,  $x^0_\ell=x^0$ and $t=0$.
  \item Set $x^{t+1}_0=x^t_\ell$. For $i=1,\ldots, \ell$, compute
\begin{equation}\displayindent-15pt\displaywidth\textwidth
\begin{split}
& x^{t+1}_i= (I-\gamma_i^{-1}A_i^T A_i )x^{t+1}_{i-1}+ \gamma_i^{-1}A_i^T {\rm Proj}_{C_i}(\gamma_i y^t_i+A_ix^{t+1}_{i-1}),\label{alg-1}  \\
& y^{t+1}_i=y^t_i+\gamma_i^{-1}A_i x^{t+1}_{i-1}-\gamma_i^{-1}{\rm Proj}_{C_i}(\gamma_i y^t_i+A_ix^{t+1}_{i-1}).
 \end{split}
  \end{equation}
  \item  Set ${x}^{t+1}\!\!=\!x^{t+1}_\ell\!$ and ${\bm y}^{t+1}\!\!=\!(y^{t+1}_1\!,\ldots,y^{t+1}_\ell)$.\!  Update $t\leftarrow t+1$ and go to Step 2.
\end{enumerate}
\end{algorithm}

One can see that  Algorithm~\ref{alg-Dykstra}  reduces to Algorithm \ref{alg-Dykstra-original} if each $A_i=I$.
As discussed above,  Algorithm~\ref{alg-Dykstra} is exactly a CGD method for solving \eqref{problem-duald}.
The following proposition collects some properties concerning the dual iterates $\{{\bm y}^t\}$ in Algorithm \ref{alg-Dykstra}, which are immediate consequences of known results on CGD; see, for example, \cite{Tseng-Yun-2009}.
We list them here together with their simple proofs for the convenience of our subsequent convergence rate analysis.
\begin{proposition}\label{prop-descent}
Consider \eqref{problem}. Let the function $d$ be  given in \eqref{problem-duald}   and $\{{\bm y}^t\}$ be the sequence generated by Algorithm \ref{alg-Dykstra}.
Then the following statements hold:
\begin{enumerate}[{\rm (i)}]
\item  For each $i = 1,\ldots,\ell$, it holds that
\[
\begin{aligned}
  d(\tilde{{\bm y}}^{t+1}_i)-d(\tilde{{\bm y}}^{t+1}_{i-1})
  &\le {\color{blue}\frac{1}{2}} \Delta_i^t\le -{\color{blue}\frac{\gamma_i}{2}}\|y^{t+1}_i - y^t_i\|^2,
\end{aligned}
\]
where $\Delta_i^t \!:=\! \langle  \nabla_{y_i} g(\tilde{{\bm y}}^{t+1}_{i-1}),y^{t+1}_i \!-\! y^t_i\rangle\!+\sigma_D(\tilde{{\bm y}}^{t+1}_i) \!- \sigma_D(\tilde{{\bm y}}^{t+1}_{i-1})$, {\color{blue} $\tilde{{\bm y}}^{t+1}_{i-1}$ is as in \eqref{y-tilde} for $i = 1,\ldots,\ell$  and  $\tilde{{\bm y}}^{t+1}_\ell:={{\bm y}}^{t+1}$.}\!
\item There exists $c > 0$ such that $d({\bm y}^{t+1})-d({\bm y}^t)\leq -c\|{\bm y}^{t+1}-{\bm y}^t\|^2$.
\item  $\lim_{t\to\infty}\|{\bm y}^{t+1}-{\bm y}^t\|=0$.
\item $\{{\bm y}^t\}$ is a stationary sequence, i.e., $\lim_{t\to\infty}{\rm dist}({\bm 0},\partial d({\bm y}^t))=0$.
\item Every accumulation point of $\{{\bm y}^t\}$ is a solution of  \eqref{problem-duald}.
\end{enumerate}
\end{proposition}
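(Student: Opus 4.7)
The plan is to derive all five items from the block proximal gradient structure of Algorithm~\ref{alg-Dykstra}. Observe first that $g({\bm y})=\frac{1}{2}\|{\bm A}^T{\bm y}-\bar v\|^2-\frac{1}{2}\|\bar v\|^2$ is a convex quadratic whose Hessian with respect to the $i$-th block is $A_iA_i^T$; consequently, with the remaining blocks fixed, $y_i\mapsto g(\cdot)$ is $\gamma_i$-smooth with $\gamma_i=\lambda_{\max}(A_iA_i^T)$. Combined with the separability $\sigma_D({\bm y})=\sum_i \sigma_{C_i}(y_i)$, each inner update \eqref{coordinate} is exactly a proximal-gradient step on the $i$-th block of $d$.

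For (i), I would combine two standard consequences of this structure. First, the descent lemma applied along the block $y_i$ of the $\gamma_i$-smooth function $g$ gives
\[
g(\tilde{{\bm y}}^{t+1}_i)\le g(\tilde{{\bm y}}^{t+1}_{i-1}) + \langle \nabla_{y_i} g(\tilde{{\bm y}}^{t+1}_{i-1}),\, y_i^{t+1}-y_i^t\rangle + \frac{\gamma_i}{2}\|y_i^{t+1}-y_i^t\|^2,
\]
which, after adding $\sigma_D(\tilde{{\bm y}}^{t+1}_i)-\sigma_D(\tilde{{\bm y}}^{t+1}_{i-1})=\sigma_{C_i}(y_i^{t+1})-\sigma_{C_i}(y_i^t)$ to both sides, rewrites as $d(\tilde{{\bm y}}^{t+1}_i)-d(\tilde{{\bm y}}^{t+1}_{i-1})\le \Delta_i^t + \frac{\gamma_i}{2}\|y_i^{t+1}-y_i^t\|^2$. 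Second, the optimality condition for the strongly convex subproblem \eqref{coordinate} reads $\gamma_i(y_i^t-y_i^{t+1})-\nabla_{y_i}g(\tilde{{\bm y}}^{t+1}_{i-1}) \in \partial\sigma_{C_i}(y_i^{t+1})$; pairing this inclusion with the subgradient inequality for $\sigma_{C_i}$ at $y_i^{t+1}$ evaluated at $y_i^t$ yields $\Delta_i^t \le -\gamma_i\|y_i^{t+1}-y_i^t\|^2$. Combining the two displayed bounds gives both inequalities in (i).

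Parts (ii) and (iii) are then mechanical: summing the second inequality of (i) over $i=1,\ldots,\ell$ proves (ii) with $c=\frac{1}{2}\min_i \gamma_i>0$ (each $A_i\neq 0$ ensures $\gamma_i>0$), and telescoping (ii) together with the lower bound $d\ge d^*>-\infty$ coming from Proposition~\ref{prop-primal-dual} forces $\sum_t \|{\bm y}^{t+1}-{\bm y}^t\|^2<\infty$, hence (iii).

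For (iv), I would explicitly exhibit a subgradient of $d$ at ${\bm y}^{t+1}$ whose norm is controlled by $\|{\bm y}^{t+1}-{\bm y}^t\|$. Since $\partial d({\bm y})={\bm A}({\bm A}^T{\bm y}-\bar v)+\prod_i \partial\sigma_{C_i}(y_i)$, the inclusion from the subproblem optimality above shows that the vector whose $i$-th block equals $\nabla_{y_i}g({\bm y}^{t+1})-\nabla_{y_i}g(\tilde{{\bm y}}^{t+1}_{i-1})-\gamma_i(y_i^{t+1}-y_i^t)$ belongs to $\partial d({\bm y}^{t+1})$. Using $\nabla_{y_i}g({\bm y}^{t+1})-\nabla_{y_i}g(\tilde{{\bm y}}^{t+1}_{i-1})=A_i\sum_{j>i}A_j^T(y_j^{t+1}-y_j^t)$, its norm is bounded by a constant multiple of $\|{\bm y}^{t+1}-{\bm y}^t\|$, and (iii) finishes the job. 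Finally, for (v), let ${\bm y}^{t_k}\to {\bm y}^*$; choosing $\eta^k\in\partial d({\bm y}^{t_k})$ with $\eta^k\to{\bm 0}$ via (iv) and passing to the limit in the convex subgradient inequality $d({\bm z})\ge d({\bm y}^{t_k})+\langle \eta^k,\, {\bm z}-{\bm y}^{t_k}\rangle$ while invoking lower semicontinuity of $d$ yields ${\bm 0}\in\partial d({\bm y}^*)$. I anticipate no substantive obstacle beyond careful index bookkeeping; the only mildly delicate step is ensuring the subgradient constructed in (iv) lies at ${\bm y}^{t+1}$ rather than at $\tilde{{\bm y}}^{t+1}_{i-1}$, which is why the correction term $\nabla_{y_i}g({\bm y}^{t+1})-\nabla_{y_i}g(\tilde{{\bm y}}^{t+1}_{i-1})$ must be included.
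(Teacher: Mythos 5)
Your proposal is correct and follows essentially the same route as the paper: the block descent inequality for the quadratic $g$ combined with the optimality condition of the strongly convex subproblem yields (i), summing and telescoping give (ii)--(iii), the perturbed optimality condition supplies an explicit element of $\partial d({\bm y}^{t+1})$ of size $O(\|{\bm y}^{t+1}-{\bm y}^t\|)$ for (iv), and a limiting argument (the paper cites graph-closedness of $\partial d$ where you pass to the limit in the subgradient inequality using lower semicontinuity) gives (v). The only slip is the index range in (iv): since the $i$-th block of $\tilde{{\bm y}}^{t+1}_{i-1}$ is $y_i^t$ while that of ${\bm y}^{t+1}$ is $y_i^{t+1}$, the gradient difference equals $A_i\sum_{j\ge i}A_j^T(y_j^{t+1}-y_j^t)$ rather than a sum over $j>i$, which does not affect the $O(\|{\bm y}^{t+1}-{\bm y}^t\|)$ bound.
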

\begin{proof}
We first note from the definition of $\tilde{{\bm y}}^{t+1}_{i-1}$ in \eqref{y-tilde}, the definition of $y_i^{t+1}$ in \eqref{coordinate}, and the strong convexity of the objective in \eqref{coordinate} that for $i = 1,\ldots,\ell$
\begin{align}\label{eq:additional}
\!\!\langle  \nabla_{y_i} g(\tilde{{\bm y}}^{t+1}_{i-1}),y^{t+1}_i \!\!-\! y^t_i\rangle \!+ \!{\color{blue}\frac{\gamma_i}{2}}\|y^{t+1}_i\!\! -\! y^t_i\|^2\!\!+\!\sigma_D(\tilde{{\bm y}}^{t+1}_i)
\!\le\! \sigma_D(\tilde{{\bm y}}^{t+1}_{i-1}) \!-\! {\color{blue}\frac{\gamma_i}{2}}\|y^{t+1}_i\!\!\! -\! y^t_i\|^2\!\!.\!\!\!\!\!
\end{align}
In addition, if we let $z_i^t := \sum_{j=1}^{i-1} A_j^T y^{t+1}_j + \sum_{j=i+1}^{\ell} A_j^T y^t_j-\bar{v}$, then
\begin{align}
& d(\tilde{{\bm y}}^{t+1}_i) = g(\tilde{{\bm y}}^{t+1}_i) + \sigma_D(\tilde{{\bm y}}^{t+1}_i) = {\frac{1}{2}\left\|A_i^T y^{t+1}_i + z_i^t\right\|^2-\frac{1}{2}\|\bar{v}\|^2}+\sigma_D(\tilde{{\bm y}}^{t+1}_i)\notag\\
 &
 \overset{\rm (a)}= \!\frac{1}{2}\left\|A_i^T y^{t}_i + z_i^t\right\|^2\!-\!\frac{1}{2}\|\bar{v}\|^2\! +\!\langle  \nabla_{y_i} g(\tilde{{\bm y}}^{t+1}_{i-1}),y^{t+1}_i - y^t_i\rangle \!+\! \frac12\|A_i^T(y^{t+1}_i - y^t_i)\|^2\!+\!\sigma_D(\tilde{{\bm y}}^{t+1}_i)\notag\\
&   \le \frac{1}{2}\left\|A_i^T y^{t}_i + z_i^t\right\|^2\!-\!\frac{1}{2}\|\bar{v}\|^2\! + \!\langle  \nabla_{y_i} g(\tilde{{\bm y}}^{t+1}_{i-1}),y^{t+1}_i - y^t_i\rangle \!+\! \frac{\gamma_i}2\|y^{t+1}_i - y^t_i\|^2\!+\!\sigma_D(\tilde{{\bm y}}^{t+1}_i)\notag\\
&  \overset{\rm (b)}= d(\tilde{{\bm y}}^{t+1}_{i-1}) + \Delta_i^t + \frac{\gamma_i}2\|y^{t+1}_i - y^t_i\|^2{\color{blue}\overset{\rm (c)}\le} d(\tilde{{\bm y}}^{t+1}_{i-1}) - {\color{blue}\frac{\gamma_i}{2}}\|y^{t+1}_i - y^t_i\|^2,\notag
\end{align}
where (a) holds since $ A_i(z_i^t + A_i^Ty_i^t) = \nabla_{y_i}g(\tilde{{\bm y}}^{t+1}_{i-1})$, (b) follows from the definition of $\Delta_i^t$, and {\color{blue}(c)  follows from \eqref{eq:additional} and the definition of $\Delta^t_i$.}
This proves item (i).
Item (ii) is a direct consequence of (i), and item (iii) follows from item (ii) (since $\inf d>-\infty$ by Proposition~\ref{prop-primal-dual}).

For item (iv), from the first-order optimality condition of \eqref{coordinate}, we see that
\begin{align*}
0&\in \nabla_{y_i}g(\tilde{{\bm y}}^{t+1}_{i-1}) + \gamma_i (y^{t+1}_i-y^t_i)+\partial\sigma_{C_i}(y^{t+1}_i)\\
&= [\gamma_i (y^{t+1}_i-y^t_i)+(\nabla_{y_i}g(\tilde{{\bm y}}^{t+1}_{i-1})-\nabla_{y_i}g({{\bm y}}^{t+1}))]+\nabla_{y_i}g({{\bm y}}^{t+1})+\partial\sigma_{C_i}(y^{t+1}_i).
\end{align*}
Using the fact that ${\bm y}^{t+1}-{\bm y}^t\rightarrow 0$ from  item (iii), the {\em uniform} continuity of $\nabla g$ on $\R^m$ and the definitions  ${\bm y}^{t+1}=(y^{t+1}_1,\ldots, y^{t+1}_\ell)$ and $\tilde{{\bm y}}^{t+1}_{i-1}=(y^{t+1}_1,\ldots,y^{t+1}_{i-1}, y^t_i,\ldots, y^t_\ell)$ (see \eqref{y-tilde}), the above display implies that
$\{{\bm y}^t\}$ is a stationary sequence. Finally, item (v) follows immediately from item (iv) and the closedness of the subdifferential as a set-valued mapping \cite[Theorem 24.4]{Rockafellar-1970}.
\end{proof}

%

Proposition~\ref{prop-descent}(v) is useful only when accumulation points exist. In the case when each $A_i=I$ in \eqref{problem}, it was shown in \cite[Lemma 4.6]{Han-1988} that $d$ is level-bounded if
$\bigcap_{i=1}^\ell {\rm int}\,C_i\neq \emptyset$, which would further imply the boundedness of $\{{\bm y}^t\}$ and hence the existence of accumulation points.
However, in general, it can happen that $\bigcap_{i=1}^\ell {\rm int}\,C_i= \emptyset$ and the sequence $\{{\bm y}^t\}$ can be unbounded.
Fortunately, Proposition~\ref{prop-descent}(iv) states that $\{{\bm y}^t\}$ is \emph{always} a stationary sequence. Below, as in \cite{Auslender-Cominetti-Crouziex-1993}
which studied the case when $A_i = I$ for all $i$, we will show that every stationary sequence of the function $d$ in \eqref{problem-duald} is minimizing, under mild conditions on the set of intersection.
Recall from \cite[Definition~4.2.2]{Auslender-Alfred-Teboulle-2006} that proper closed convex functions with all stationary sequences being minimizing are said to be asymptotically well-behaved (AWB).

\begin{proposition}[AWB property of $d$]\label{prop-awb}
Consider \eqref{problem} and assume the condition $\bigcap_{i=1}^{\ell} A_i^{-1}{\rm ri}\,C_i \neq \emptyset$. Let $d$ be given in \eqref{problem-duald} and {\color{blue}$q({\bm y}):=\inf_x \psi(x,{\bm y})$ with $\psi$  defined in \eqref{psi}}. Then ${\bm 0}\in {\rm ri}\,{\rm dom}\,q$.
Moreover, it holds that $d({\bm y}) = q^*({\bm y})$ for all ${\bm y}$ and
\begin{align}\label{prop-awb-2}
\Argmin d = \partial q_{E}({\bm 0})+E^\bot \neq \emptyset,
\end{align}
where $q_{E}({\bm y}):=q({\rm Proj}_E({\bm y}))$ and
\begin{align}\label{prop-awb-3}
E:={\rm span}({\rm dom}\,q)={\rm span}(D- {\rm Range}({\bm A}))
\end{align}
with $D$ and ${\bm A}$ given in \eqref{A-D}.
Furthermore, every stationary sequence $\{{\bm z}^t\}$ of $d$ satisfies $d({\bm z}^t)\rightarrow d^*$ and
${\rm dist}({\bm z}^t,\Argmin d)\rightarrow 0$, where $d^*$ is given in \eqref{problem-duald}.
\end{proposition}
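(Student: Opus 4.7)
The plan is to recognize $d$ as the conjugate of the primal value function $q$, exploit the hypothesis $\bigcap_{i=1}^\ell A_i^{-1}{\rm ri}\,C_i \neq \emptyset$ to place ${\bm 0}$ in the relative interior of ${\rm dom}\,q$, and handle the possibly unbounded solution set $\Argmin d$ via the invariance of $d$ along $E^\perp$.

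First I would compute ${\rm dom}\,q$: we have $q({\bm y}) < \infty$ iff there exists $x$ with ${\bm A}x + {\bm y} \in D$, so ${\rm dom}\,q = D - {\rm Range}({\bm A})$, and by standard relative-interior calculus for sums and images of convex sets, ${\rm ri}\,{\rm dom}\,q = {\rm ri}\,D - {\rm Range}({\bm A})$. The hypothesis supplies some $\hat x$ with ${\bm A}\hat x \in {\rm ri}\,D$, which places ${\bm 0} = {\bm A}\hat x - {\bm A}\hat x$ in ${\rm ri}\,{\rm dom}\,q$. Because $q$ is the infimal projection in $x$ of the jointly convex $\psi$, the conjugate satisfies $q^* = \psi^*(0,\cdot)$; combining this with \eqref{psi-conjugate} and $f^*(u) = \frac{1}{2}\|u + \bar v\|^2 - \frac{1}{2}\|\bar v\|^2$ yields $q^* = d$.

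Second, I would use strong duality to identify $\Argmin d$. Since ${\bm 0} \in {\rm ri}\,{\rm dom}\,q$, we have $q^{**}({\bm 0}) = q({\bm 0})$ and $\partial q({\bm 0}) \neq \emptyset$ by \cite[Theorem 23.4]{Rockafellar-1970}. Then ${\bm y}^* \in \Argmin q^*$ is equivalent to $q({\bm 0}) + q^*({\bm y}^*) = \langle {\bm 0}, {\bm y}^*\rangle$, which via \eqref{young} is ${\bm y}^* \in \partial q({\bm 0})$. Because $q$ is infinite off $E$, the subgradient inequality for $q$ at ${\bm 0}$ only constrains the $E$-projection of ${\bm \xi} \in \partial q({\bm 0})$ and leaves its $E^\perp$-component free, so $\partial q({\bm 0}) = \partial q_E({\bm 0}) + E^\perp$, giving the claimed formula for $\Argmin d$.

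The hard part is the AWB assertion for a stationary sequence $\{{\bm z}^t\}$, because $\Argmin d$ and potentially $\{{\bm z}^t\}$ can be unbounded. Since ${\rm dom}\,q \subseteq E$, the function $d$ is invariant under $E^\perp$-translations, which forces $\partial d({\bm y}) \subseteq E$ and $\partial d({\bm y} + {\bm \eta}) = \partial d({\bm y})$ for every ${\bm \eta} \in E^\perp$; splitting ${\bm z}^t = {\bm z}^t_E + {\bm z}^t_\perp$ accordingly preserves both $d({\bm z}^t)$ and $\partial d({\bm z}^t)$, reducing the analysis to $\{{\bm z}^t_E\} \subseteq E$. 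Taking ${\bm w}^t \in \partial d({\bm z}^t_E)$ with ${\bm w}^t \to {\bm 0}$ and inverting via \eqref{young} gives ${\bm z}^t_E \in \partial q^{**}({\bm w}^t)$; since ${\bm 0} \in {\rm ri}\,{\rm dom}\,q$, $q^{**}$ restricted to $E$ is continuous in an $E$-neighborhood of ${\bm 0}$, so its subdifferential is locally bounded there, which forces $\{{\bm z}^t_E\}$ to be bounded. Closedness of the convex subdifferential then promotes every cluster point of $\{{\bm z}^t_E\}$ to an element of $\Argmin d$, yielding $d({\bm z}^t) \to d^*$ and ${\rm dist}({\bm z}^t_E, \Argmin d) \to 0$ via a standard subsequence argument, and the $E^\perp$-translation invariance of $\Argmin d$ finally gives ${\rm dist}({\bm z}^t, \Argmin d) \to 0$.
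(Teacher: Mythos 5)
Your proposal is correct, and its first half --- computing ${\rm dom}\,q = D-{\rm Range}({\bm A})$, invoking relative-interior calculus to get ${\rm ri}\,{\rm dom}\,q = {\rm ri}\,D - {\rm Range}({\bm A}) \ni {\bm 0}$, and identifying $d = \psi^*(0,\cdot) = q^*$ --- is exactly what the paper does. The difference lies in the second half: the paper stops there and obtains \eqref{prop-awb-2} together with the AWB conclusion by citing \cite[Theorem~3.1]{Auslender-Cominetti-Crouziex-1993} as a black box, whereas you reconstruct that theorem's content from scratch. Your reconstruction is sound: the identification $\Argmin q^* = \partial q^{**}({\bm 0}) = \partial q({\bm 0})$ uses ${\bm 0}\in{\rm ri}\,{\rm dom}\,q$ to guarantee $q^{**}({\bm 0})=q({\bm 0})$ and nonemptiness of the subdifferential; the decomposition $\partial q({\bm 0}) = \partial q_E({\bm 0})+E^\perp$ follows because the subgradient inequality is only tested on ${\rm dom}\,q\subseteq E$; and the AWB argument correctly reduces a stationary sequence to its $E$-component via the $E^\perp$-translation invariance of $d$ (the paper's Lemma~\ref{lemma-constant}), then traps $\{{\bm z}^t_E\}$ in a bounded set via local boundedness of the subdifferential. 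One point of precision: $\partial q^{**}({\bm w})$ itself is \emph{not} locally bounded near ${\bm 0}$ (it contains $E^\perp$-directions), so the boundedness claim must be applied to the restriction $q^{**}|_E$ viewed as a convex function on $E$ with ${\bm 0}$ in the interior (relative to $E$) of its domain, using that both ${\bm w}^t\in\partial d({\bm z}^t_E)\subseteq E$ and ${\bm z}^t_E\in E$; your wording gestures at exactly this, and the conclusion holds. The trade-off is clear: the paper's route is shorter but opaque at the crucial step, while yours is self-contained and makes visible precisely where ${\bm 0}\in{\rm ri}\,{\rm dom}\,q$ and the subspace structure of $E^\perp$ enter --- which is also the mechanism the paper later reuses in Theorem~\ref{the-ukl}.
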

\begin{proof}
 From {\color{blue}the definition of $q$}, we have  $q({\bm y})<+\infty$ if and only if there exists $x\in \R^n$  such that $A_i x+y_i \in C_i$ for all $i$ (since ${\rm  dom}\, f=\R^n$), which implies that
${\rm dom}\, q=D- {\rm Range}({\bm A})$.
According to  \cite[Exercise 2.45]{Rockafellar-Wets-2009}, the above display implies that
\[
{\rm ri}\,{\rm dom}\, q={\rm ri}\, D-{\rm Range}({\bm A}).
\]
Thus, the assumption $\bigcap_{i=1}^{\ell} A_i^{-1}{\rm ri}\,C_i\neq \emptyset$ implies ${\bm 0}\in {\rm ri}\,{\rm dom}\, q$.
{\color{blue}
It follows from \eqref{psi-conjugate}, \eqref{problem-duald} and the definition of $q$ that} $d({\bm y})=\psi^*(0,{\bm y}) = q^*({\bm y})$ for all ${\bm y}$.
The desired conclusions now follow from \cite[Theorem~3.1]{Auslender-Cominetti-Crouziex-1993}.
\end{proof}


We can now state a global convergence result that is informative even when $\{{\bm y}^t\}$ is possibly unbounded. The proof follows directly from Propositions \ref{prop-descent}(iv) and \ref{prop-awb}.
\begin{theorem}[Global convergence without  boundedness condition]\label{the-convergence-awb}
Consider \eqref{problem} and assume that $\bigcap_{i=1}^{\ell} A_i^{-1}{\rm ri}\,C_i \neq \emptyset$. Let $\{{\bm y}^t\}$ be the sequence generated by  Algorithm \ref{alg-Dykstra}. Then it holds that
$d({\bm y}^t)\rightarrow d^*$ and ${\rm dist}({\bm y}^t, \Argmin d)\rightarrow 0$, where $d$ and $d^*$ are defined in \eqref{problem-duald}.
\end{theorem}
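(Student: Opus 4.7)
The plan is to simply combine two earlier results that have already been set up with this application in mind. By Proposition~\ref{prop-descent}(iv), the dual iterates $\{{\bm y}^t\}$ produced by Algorithm~\ref{alg-Dykstra} form a stationary sequence of $d$, meaning ${\rm dist}({\bm 0},\partial d({\bm y}^t))\to 0$. Note that this conclusion does not require any compactness of $\{{\bm y}^t\}$ or even the existence of accumulation points, so it is available here without further assumption.

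Next, Proposition~\ref{prop-awb} asserts that, under the relative interior condition $\bigcap_{i=1}^\ell A_i^{-1}{\rm ri}\,C_i\neq\emptyset$, the dual objective $d$ is asymptotically well-behaved in the sense that every stationary sequence of $d$ is a minimizing sequence whose distance to $\Argmin d$ tends to zero. This is precisely the fact that rescues us from the potential unboundedness of $\{{\bm y}^t\}$ that can occur when $\bigcap_{i=1}^\ell {\rm int}\,C_i=\emptyset$.

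The plan is therefore to apply Proposition~\ref{prop-awb} with ${\bm z}^t:={\bm y}^t$: the stationarity required as input has already been verified via Proposition~\ref{prop-descent}(iv), so the conclusion $d({\bm y}^t)\to d^*$ and ${\rm dist}({\bm y}^t,\Argmin d)\to 0$ follows immediately. No additional estimates or limit arguments are needed, and in particular there is no hard step to overcome at this point; the genuine difficulty was encapsulated in establishing the AWB characterization via the decomposition $\Argmin d=\partial q_E({\bm 0})+E^\bot$ in Proposition~\ref{prop-awb}, which is done once and for all upstream.
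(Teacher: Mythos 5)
Your proposal is correct and is exactly the paper's argument: the authors likewise note that the theorem "follows directly from Propositions~\ref{prop-descent}(iv) and \ref{prop-awb}," i.e., stationarity of $\{{\bm y}^t\}$ plus the AWB property of $d$ under the relative interior condition. Nothing further is needed.
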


The above theorem states that the sequences $\{d({\bm y}^t)-d^*\}$ and $\{{\rm dist}({\bm y}^t, \Argmin d)\}$ converge to zero. In this paper, we are interested in their convergence rates. To this end, we recall that the KL property is widely used for analyzing the convergence rate of first-order methods, with the rate usually depending explicitly on the \emph{KL exponent} of a suitable potential function; see, for example, \cite{Attouch-Bolte-2009,Attouch-Bolte-Redont-Soubeyran-2010,Attouch-Bolte-Svaiter-2013,Li-Pong-2018,Yu-Li-Pong-2022}. In the next section, we identify a large class of sets $C_i$ such that the corresponding function $d$ can be shown to satisfy the KL property with an \emph{explicitly known exponent}. Moreover, we show that the KL exponent can be chosen to be \emph{uniform} over a neighborhood of the possibly {\em unbounded} $\Argmin d$. These results will be further developed in Section~\ref{sec5} to derive explicit convergence rates of $\{d({\bm y}^t)-d^*\}$ and $\{{\rm dist}({\bm y}^t, \Argmin d)\}$.

\section{KL property and $C^{1,\alpha}$-cone reducible sets}\label{sec4}

\subsection{$C^{1,\alpha}$-cone reducible sets}

In this subsection, we introduce a class of sets for our subsequent convergence rate analysis.
{\color{blue}Specifically, we define the following notion of $C^{1,\alpha}$-cone reducibility for a closed set ${\frak D}\subseteq \mathbb{X}$, which can be seen as a generalization of the notion of $C^{2}$-cone reducibility in \cite{Shapiro-2003};
here and throughout, we use the typefaces $\mathbb{X}$, $\mathbb{Y}$, $\mathbb{Z}$, etc., to denote finite-dimensional Hilbert spaces}
 with their associated inner products and norms denoted by $\langle\cdot,\cdot\rangle$ and $\|\cdot\|$, respectively, by an abuse of notation.
{\color{blue}We also let $\mathcal{T}^*$ denote the adjoint of a linear operator $\mathcal{T}: \mathbb{X}\rightarrow\mathbb{Z}$, defined via $\langle \mathcal{T}x, z\rangle=\langle x, \mathcal{T}^*z\rangle$ for all $x\in\mathbb{X}$ and $z\in\mathbb{Z}$}; and $\|{\cal T}\|$ is the operator norm of ${\cal T}$.

\begin{definition}[$C^{1,\alpha}$-cone reducible sets] \label{def-cone-reducible}
Let $\alpha\in (0,1]$.
A closed set $\mathfrak{D}\subseteq\mathbb{X}$ is said to be  $C^{1,\alpha}$-cone reducible at $\hat{x}\in\mathfrak{D}$ if there exist positive constant $\rho$,  a   closed convex pointed cone {\color{blue}$K$ in a   finite-dimensional Hilbert space $\mathbb{Y}$} and a mapping $\Xi:\mathbb{X}\rightarrow \mathbb{Y}$ that maps  $\hat{x}$ to $0$ and is continuously differentiable in $B(\hat{x},\rho)$  such that   $D\Xi(\hat{x})$ is surjective,\footnote{Recall that $D\Xi(x)$ is the derivative mapping of $\Xi$ at an $x\in\mathbb{X}$, which is the linear mapping from $\mathbb{X}$ to $\mathbb{Y}$ such that
$[D\Xi(x)](p)=\lim_{t \downarrow 0} \frac{\Xi(x+tp)-\Xi(x)}{t}$ for all $p\in\mathbb{X}$.}
the mapping $x\mapsto D\Xi(x)$ is $\alpha$-H\"{o}lder-continuous in $B(\hat{x},\rho)$ and
\[
\mathfrak{D}\cap B(\hat{x},\rho)=\{x: \Xi(x)\in K\}\cap B(\hat{x},\rho).
\]
A closed set $\mathfrak{D}$ is said to be  $C^{1,\alpha}$-cone reducible if it is $C^{1,\alpha}$-cone reducible   at every $x\in\mathfrak{D}$.
\end{definition}

Clearly, every $C^2$-cone reducible set (see \cite[Definition~3.1]{Shapiro-2003}) is $C^{1,\alpha}$-cone reducible for any $\alpha\in (0,1]$. According to \cite{Shapiro-2003}, all polyhedrons, the second-order cone and the cone of positive semidefinite matrices are $C^2$-cone reducible sets; thus, they are also $C^{1,\alpha}$-cone reducible for any $\alpha\in (0,1]$. The next lemma establishes a ``calmness-type" property for the normal cone mapping of a $C^{1, \alpha}$-cone reducible closed convex set. This can be seen as an extension of \cite[Theorem 4.4]{Yu-Li-Pong-2022}, which considered $C^{2}$-cone reducible sets. Our proof is also similar to that of \cite[Theorem 4.4]{Yu-Li-Pong-2022}.
\begin{lemma}\label{lemma-metric}
Let  $\alpha\in (0,1]$ and $\mathfrak{D}\subseteq\mathbb{X}$ be a  closed convex set that is   $C^{1,\alpha}$-cone reducible at $\hat{x}\in\mathfrak{D}$.
Let $\hat{y}\in\mathcal{N}_{\mathfrak{D}}(\hat{x})$.
Then there exist $\hat{\rho}>0$,  $\hat{\delta}>0$ and $\hat{\kappa}>0$ such that
\begin{align}\label{lemma-c-reducible}
\mathcal{N}_{\mathfrak{D}}(x)\cap B(\hat{y},\hat{\delta})\subseteq \mathcal{N}_{\mathfrak{D}}(\hat{x})+\hat{\kappa}\|x-\hat{x}\|^\alpha B(0,1) \ \ \mbox{for all }x\in B(\hat{x},\hat{\rho}).
\end{align}
\end{lemma}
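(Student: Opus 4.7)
The plan is to transfer the local structure supplied by $C^{1,\alpha}$-cone reducibility to a corresponding local estimate on the normal cone mapping, adapting the strategy used for the $C^{2}$-cone reducible case in \cite{Yu-Li-Pong-2022}. Near $\hat{x}$, cone reducibility gives $\mathfrak{D}\cap B(\hat{x},\rho)=\{x:\Xi(x)\in K\}\cap B(\hat{x},\rho)$, where $\Xi$ is continuously differentiable, $D\Xi$ is $\alpha$-H\"older continuous with some constant $L>0$, $D\Xi(\hat{x})$ is surjective, and $K$ is a closed convex pointed cone. Surjectivity of $D\Xi(\hat{x})$ amounts to Robinson's constraint qualification at $\hat{x}$; by continuity of $D\Xi$, it persists on a slightly smaller ball around $\hat{x}$. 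On this ball, I would invoke the standard chain-rule formula
\[
\mathcal{N}_{\mathfrak{D}}(x)=D\Xi(x)^{*}\,\mathcal{N}_{K}(\Xi(x)).
\]

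Given $y\in\mathcal{N}_{\mathfrak{D}}(x)\cap B(\hat{y},\hat{\delta})$, the above representation lets me write $y=D\Xi(x)^{*}\eta$ with $\eta\in\mathcal{N}_{K}(\Xi(x))$. Because $K$ is a cone and $\Xi(x)\in K$, we have $\mathcal{N}_{K}(\Xi(x))\subseteq\mathcal{N}_{K}(0)=K^{\circ}$, so $\eta\in K^{\circ}$ and hence $D\Xi(\hat{x})^{*}\eta\in D\Xi(\hat{x})^{*}K^{\circ}=\mathcal{N}_{\mathfrak{D}}(\hat{x})$ is a natural candidate to approximate $y$ inside $\mathcal{N}_{\mathfrak{D}}(\hat{x})$. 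The error telescopes to
\[
\|y-D\Xi(\hat{x})^{*}\eta\|=\|(D\Xi(x)^{*}-D\Xi(\hat{x})^{*})\eta\|\le L\,\|x-\hat{x}\|^{\alpha}\|\eta\|,
\]
so the whole task reduces to a uniform bound on $\|\eta\|$ in terms of the radii $\hat{\rho}$ and $\hat{\delta}$.

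The main obstacle is this uniform bound on $\|\eta\|$. I plan to exploit that surjectivity of $D\Xi(\hat{x})$ is equivalent, in finite dimensions, to injectivity of $D\Xi(\hat{x})^{*}$, which in turn gives the uniform lower bound $\sigma:=\inf_{\|\zeta\|=1}\|D\Xi(\hat{x})^{*}\zeta\|>0$. Continuity of $x\mapsto D\Xi(x)$ then yields a possibly smaller neighborhood of $\hat{x}$ on which $\|D\Xi(x)^{*}\zeta\|\ge\tfrac{\sigma}{2}\|\zeta\|$ for every $\zeta$. Combined with $\|y\|\le\|\hat{y}\|+\hat{\delta}$, this produces $\|\eta\|\le\tfrac{2}{\sigma}(\|\hat{y}\|+\hat{\delta})$. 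Substituting this into the H\"older estimate above yields \eqref{lemma-c-reducible} with $\hat{\kappa}=\tfrac{2L}{\sigma}(\|\hat{y}\|+\hat{\delta})$. The radius $\hat{\rho}$ is then taken small enough so that all three local conditions hold simultaneously on $B(\hat{x},\hat{\rho})$: staying inside $B(\hat{x},\rho)$, validity of the normal-cone chain rule, and the near-isometry estimate on $D\Xi(x)^{*}$.
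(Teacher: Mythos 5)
Your proposal is correct and follows essentially the same route as the paper's proof: reduction to the cone $K$ via $\Xi$, the chain rule $\mathcal{N}_{\mathfrak{D}}(x)=D\Xi(x)^{*}\mathcal{N}_{K}(\Xi(x))$ on a ball where surjectivity persists, the approximation of $y=D\Xi(x)^{*}\eta$ by $D\Xi(\hat{x})^{*}\eta\in\mathcal{N}_{\mathfrak{D}}(\hat{x})$ using $\mathcal{N}_{K}(\Xi(x))\subseteq K^{\circ}=\mathcal{N}_{K}(0)$, and the $\alpha$-H\"older estimate on the error. The only (harmless) divergence is in bounding the multiplier: the paper writes $\eta=[D\Xi(x)D\Xi(x)^{*}]^{-1}D\Xi(x)y$ and uniformly bounds $\|[D\Xi(x)D\Xi(x)^{*}]^{-1}\|$, whereas you use the equivalent lower bound $\|D\Xi(x)^{*}\zeta\|\ge\tfrac{\sigma}{2}\|\zeta\|$ coming from injectivity of $D\Xi(\hat{x})^{*}$, which also guarantees surjectivity on the smaller ball and yields a cleaner explicit constant $\hat{\kappa}$.
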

\begin{proof}
Since $\mathfrak{D}$  is   $C^{1,\alpha}$-cone reducible at $\hat{x}$, there exist a positive constant ${\rho}$,  closed convex pointed cone $K\subseteq \mathbb{Y}$ and
 continuously differentiable mapping $\Xi:\mathbb{X}\rightarrow \mathbb{Y}$ satisfying the conditions in Definition \ref{def-cone-reducible} such that
\begin{align}\label{lemma-c-reducible-1-1}
 \mathfrak{D}\cap B(\hat{x},{\rho})=\{x: \Xi(x)\in K\}\cap B(\hat{x},{\rho}).
 \end{align}
 Since $D\Xi(\hat{x})$ is surjective, one can choose a positive  $\tilde{\rho}\in(0,\rho)$ such that  for any $x\in B(\hat{x},\tilde{\rho})$, the linear mapping $D\Xi({x})$ is  surjective (meaning that $D\Xi(x)D\Xi(x)^*$ is  invertible) and that
\begin{align}\label{lemma-c-reducible-1-2}
\sup_{x\in B(\hat x,\tilde \rho)}\|[D\Xi(x)D\Xi(x)^*]^{-1}\|=: \tau < \infty.
\end{align}

The  relation \eqref{lemma-c-reducible-1-1} means that
 \[
\delta_\mathfrak{D}(x)=\delta_K(\Xi(x))  \ \ \mbox{for all }x\in B(\hat{x},\tilde{\rho}).
 \]
 Using  the fact that $D\Xi({x})$ is surjective whenever $x\in B(\hat{x},\tilde{\rho})$, and invoking \cite[Exercise 10.7]{Rockafellar-Wets-2009}, we deduce from the above display that
 \begin{align}\label{lemma-c-reducible-1}
\mathcal{N}_\mathfrak{D}(x)=D\Xi({x})^*\mathcal{N}_K(\Xi(x))  \ \ \mbox{for all }x\in \mathfrak{D}\cap B(\hat{x},\tilde{\rho}).
 \end{align}
Fix any  $\tilde{\delta}>0$ and take
\begin{align}\label{lemma-c-reducible-1-3}
x\in \mathfrak{D}\cap B(\hat{x},\tilde{\rho}), \quad  y\in\mathcal{N}_\mathfrak{D}(x)\cap B(\hat{y},\tilde{\delta}).
\end{align}
 Relation \eqref{lemma-c-reducible-1} implies that there exists a $v_x\in\mathcal{N}_K(\Xi(x))$ so that
 \begin{align}\label{lemma-c-reducible-2}
 y=D\Xi({x})^*v_x.
 \end{align}
Recall that for any ${z}_0\in K$, it holds that
 \begin{align}\label{lemma-c-reducible-3}
 \mathcal{N}_K({z}_0)&\overset{\rm (a)}=\{ u: \langle u, z-{z}_0\rangle\leq 0, \forall\, z\in K\}\overset{\rm (b)}\subseteq\{ u: \langle u, z\rangle\leq 0, \forall\, z\in K\}\overset{\rm (c)}=K^\circ,
 \end{align}
 where (a) follows from the  definition of normal cone, and (b) and (c) hold because  $K$ is  a closed convex  cone.
In particular, equality holds throughout \eqref{lemma-c-reducible-3} if $z_0=0.$

Now, for the  $v_x$ in \eqref{lemma-c-reducible-2}, we have
 \begin{align}\label{lemma-c-reducible-4}
 \!D\Xi(\hat{x})^*v_x\!\overset{\rm (a)}\in\!  D\Xi(\hat{x})^*\mathcal{N}_K(\Xi(x)) \!\!\overset{\rm (b)}\subseteq\!\!  D\Xi(\hat{x})^*K^\circ \!\overset{\rm (c)}=\! D\Xi(\hat{x})^*\mathcal{N}_K(\Xi(\hat{x}))
 \!\overset{\rm (d)}=\! \mathcal{N}_\mathfrak{D}(\hat{x}),
 \end{align}
where (a) holds because $v_x\in\mathcal{N}_K(\Xi(x))$, (b) follows from $\Xi(x)\in K$ and \eqref{lemma-c-reducible-3},
(c) holds thanks to $\Xi(\hat{x})=0$, and (d) follows from \eqref{lemma-c-reducible-1}.

Next, for any $x$ and $y$ as in \eqref{lemma-c-reducible-1-3}, by   \eqref{lemma-c-reducible-2} and the surjectivity of  $D\Xi(x)$, we have
 \begin{align}\label{lemma-c-reducible-5}
v_x= [D\Xi(x)D\Xi(x)^*]^{-1}D\Xi(x)y=:H(x)D\Xi(x)y,
 \end{align}
where $H(x):=[D\Xi(x)D\Xi(x)^*]^{-1}$.

Let $L$ be the $\alpha$-H\"{o}lder continuity modulus of $D\Xi(\cdot)$ over the set  $x\in \mathfrak{D}\cap B(\hat{x},\tilde{\rho})$. Then, for any  $x$ and $y$ chosen as in \eqref{lemma-c-reducible-1-3},
we have
 \begin{align}
 \|v_x-v_{\hat{x}}\|&=\|[H(x)D\Xi(x)y-H(\hat{x})D\Xi(\hat{x})\hat{y}\|  \notag\\
 &\leq  \|H(x)D\Xi(x)y-H(\hat{x})D\Xi(\hat{x}){y}\|  + \|H(\hat{x})D\Xi(\hat{x})(y-\hat{y})\| \notag\\
 &\leq  \|H({x}) D\Xi(x)-H({x})D\Xi(\hat{x})+H({x})D\Xi(\hat{x})-H(\hat{x})D\Xi(\hat{x})\| \|{y}\|\notag \\
& \quad+ \|H(\hat{x})D\Xi(\hat{x})(y-\hat{y})\| \notag\\
& \leq (\|H({x})\| \|D\Xi(x)-D\Xi(\hat{x})\|+\|H({x})-H(\hat{x})\|\|D\Xi(\hat{x})\|) \|(y-\hat{y})+\hat{y}\|\notag\\
  &\quad+ \|H(\hat{x})D\Xi(\hat{x})\| \|y-\hat{y}\| \notag\\
 &\leq (\tau L \|x-\hat{x}\|^{\alpha}+2\tau \|D\Xi(\hat{x})\|)(\|\hat{y}\|+\|y-\hat{y}\|)+\tau\|D\Xi(\hat{x})\|\|y-\hat{y}\| \notag\\
 &\leq  (\tau L \tilde{\rho}^{\alpha}+2\tau \|D\Xi(\hat{x})\|)(\|\hat{y}\|+\tilde{\delta})+\tau\|D\Xi(\hat{x})\|\tilde{\delta}, \label{lemma-c-reducible-6}
 \end{align}
 where the first equality is due to \eqref{lemma-c-reducible-5}, the fourth inequality follows from \eqref{lemma-c-reducible-1-2} and the  $\alpha$-H\"{o}lder continuity of the mapping $D\Xi({\cdot})$,
  and the last inequality follows from \eqref{lemma-c-reducible-1-3}. Inequality \eqref{lemma-c-reducible-6} means that $v_x$ is bounded whenever $x$ and $y$ are chosen as in \eqref{lemma-c-reducible-1-3}, and we denote $\tilde{\kappa}:=\sup\|v_x\|<+\infty$.

 For any $x$ and $y$ chosen according to \eqref{lemma-c-reducible-1-3}, it holds that
 \[
 {\rm dist}(y,\mathcal{N}_{\mathfrak{D}}(\hat{x})) \leq \|y-D\Xi(\hat{x})^*v_x\|=\|D\Xi({x})^*v_x-D\Xi(\hat{x})^*v_x\|\leq L\tilde{\kappa}\|x-\hat{x}\|^\alpha,
 \]
 where the first inequality follows from \eqref{lemma-c-reducible-4}, the equality follows from \eqref{lemma-c-reducible-2}, and
  the last inequality follows from the $\alpha$-H\"{o}lder continuity of $D\Xi$ and the definition of $\tilde{\kappa}$. Letting $\hat{\rho}:=\tilde{\rho}, \hat{\delta}:=\tilde{\delta}$ and $\hat{\kappa}:=L\tilde{\kappa}$,
  the above display implies \eqref{lemma-c-reducible}.
\end{proof}

Equipped with Lemma~\ref{lemma-metric}, we are now ready to prove the main theorem of this subsection, which will be used for deriving KL exponent of the function $d$ given in \eqref{problem-duald} in the next subsection. It can be seen as an extension of \cite[Theorem 4.4]{Yu-Li-Pong-2022}, which studied $C^2$-cone reducible sets. Before stating the theorem, we first recall that a pair of closed convex sets $\{\mathfrak{D}_1, \mathfrak{D}_2\}$ is said to be boundedly linearly regular (see  \cite[Definition 5.6]{Bauschke-Borwein-1996}) at $\hat{x}\in\mathfrak{D}_1\cap \mathfrak{D}_2$ if for any  bounded neighborhood $\mathfrak{U}$ of $\hat{x}$,
there exists $c > 0$  such that
\[
{\rm dist}\,({x},\mathfrak{D}_1\cap \mathfrak{D}_2)\leq c({\rm dist}({x}, \mathfrak{D}_1)+{\rm dist}(x, \mathfrak{D}_2))\ \ \mbox{for all } x\in\mathfrak{U}.
\]
It is known that bounded linear regularity holds at any  $\hat{x}\in\mathfrak{D}_1\cap \mathfrak{D}_2$
when $\mathfrak{D}_1$ and $\mathfrak{D}_2$ are  polyhedral or when $\mathfrak{D}_1$ is polyhedral and $\mathfrak{D}_1 \cap {\rm ri}\,\mathfrak{D}_2\neq \emptyset$; see \cite[Corollary 3]{Bauschke-Borwein-Li-1999}.
\begin{theorem}\label{thm-kl-g}
Let $\mathfrak{D}\subseteq \mathbb{X}$ be a nonempty $C^{1,\alpha}$-cone reducible closed convex set with $\alpha\in(0,1]$, $\mathcal{T}: \mathbb{X}\rightarrow \mathbb{Y}$ be a linear mapping and   $l: \mathbb{Y}\rightarrow \R$ be strongly convex on any compact convex set with locally Lipschitz gradient, and $v\in\mathbb{X}$.
Consider
\[
h(x):=l(\mathcal{T}x)+\langle v,x\rangle+\sigma_\mathfrak{D}(x).
\]
Then, for any  $\bar{x}\in \Argmin h$, it holds that
\begin{align}\label{thm-kl-g-1}
\bar{x}\in\mathcal{N}_{\mathfrak{D}}(\bar{w})\ \text{with $\bar{w}:=-\mathcal{T}^*\nabla l(\mathcal{T}\bar{x})-v$}.
\end{align}
Moreover,   there exist   positive constants $\rho$,  $\delta$ and $\kappa$ such that
\begin{align}\label{thm-kl-g-2}
\mathcal{N}_{\mathfrak{D}}(w)\cap B(\bar{x},\delta)\subseteq \mathcal{N}_{\mathfrak{D}}(\bar{w})+\kappa\|w-\bar{w}\|^\alpha B(0,1)\ \ \mbox{for all }w\in B(\bar{w},\rho).
\end{align}
Furthermore, if  $\{\mathcal{T}^{-1}\{\mathcal{T}\bar{x}\}, \mathcal{N}_{\mathfrak{D}}(-\mathcal{T}^*\nabla l(\mathcal{T}\bar{x})-v)\}$ is boundedly linearly regular at $\bar x$, then $h$ satisfies the KL property at $\bar{x}$ with exponent $\frac{1}{\alpha+1}$.
\end{theorem}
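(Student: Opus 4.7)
For \eqref{thm-kl-g-1}, the first-order optimality at $\bar{x}\in \Argmin h$ reads $0 \in \mathcal{T}^*\nabla l(\mathcal{T}\bar{x}) + v + \partial\sigma_{\mathfrak{D}}(\bar{x})$, so $\bar{w} \in \partial\sigma_{\mathfrak{D}}(\bar{x})$; the Fenchel--Young equivalence \eqref{young} rewrites this as $\bar{x} \in \mathcal{N}_{\mathfrak{D}}(\bar{w})$, and in particular $\bar{w}\in \mathfrak{D}$. Claim \eqref{thm-kl-g-2} is then the direct application of Lemma~\ref{lemma-metric} to the $C^{1,\alpha}$-cone reducible set $\mathfrak{D}$ at the point $\bar{w}\in\mathfrak{D}$, with $\bar{x}$ in the role of the normal vector $\hat{y}$.

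For the KL property I would use the error-bound characterization \cite[Theorem~5]{Bolte-Nguyen-Peypouquet-Suter-2017} recalled in the preliminaries: it suffices to establish ${\rm dist}(x, \Argmin h) \leq C(h(x) - h(\bar{x}))^{\alpha/(\alpha+1)}$ on a neighborhood of $\bar{x}$ with $h(x) > h(\bar{x})$. First, strong convexity of $l$ on compact convex sets, combined with an averaging argument applied to two points of $\Argmin h$, forces $\mathcal{T}\bar{x}$ to be constant across $\Argmin h$; consequently $\bar{w}$ is also constant, and $\Argmin h = \mathcal{T}^{-1}\{\mathcal{T}\bar{x}\}\cap \mathcal{N}_{\mathfrak{D}}(\bar{w})$. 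The bounded linear regularity hypothesis then gives, on a bounded neighborhood of $\bar{x}$, $ {\rm dist}(x,\Argmin h) \leq c\bigl({\rm dist}(x, \mathcal{T}^{-1}\{\mathcal{T}\bar{x}\}) + {\rm dist}(x, \mathcal{N}_{\mathfrak{D}}(\bar{w}))\bigr)$.

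Using $\mathcal{T}^*\nabla l(\mathcal{T}\bar{x}) + v = -\bar{w}$, strong convexity of $l$ at $\mathcal{T}\bar{x}$ with some modulus $\mu>0$, convexity of $\sigma_{\mathfrak{D}}$, and the Fenchel--Young identity $\sigma_{\mathfrak{D}}(\bar{x}) = \langle \bar{w},\bar{x}\rangle$, one derives the two-term descent bound $h(x) - h(\bar{x}) \geq \tfrac{\mu}{2}\|\mathcal{T}x - \mathcal{T}\bar{x}\|^2 + [\sigma_{\mathfrak{D}}(x) - \langle \bar{w}, x\rangle]$. The first summand controls the first distance via the pseudoinverse bound ${\rm dist}(x,\mathcal{T}^{-1}\{\mathcal{T}\bar{x}\})\leq \|\mathcal{T}^+\|\,\|\mathcal{T}x - \mathcal{T}\bar{x}\|\leq C_1 \sqrt{h(x)-h(\bar{x})}$. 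The crux is a H\"older-type sharpness estimate for the Fenchel gap, $\sigma_{\mathfrak{D}}(x) - \langle\bar{w},x\rangle \geq c'\,{\rm dist}\bigl(x,\mathcal{N}_{\mathfrak{D}}(\bar{w})\bigr)^{(\alpha+1)/\alpha}$ for $x$ near $\bar{x}$. I would prove this by decomposing $x = x_N + x_T$ via Moreau along the mutually polar cones $\mathcal{N}_{\mathfrak{D}}(\bar{w})$ and $\mathcal{T}_{\mathfrak{D}}(\bar{w})$, so that with $e := x_T/\|x_T\|$ one has $\langle x,e\rangle = \|x_T\| = {\rm dist}(x,\mathcal{N}_{\mathfrak{D}}(\bar{w}))$ and $e\in \mathcal{T}_{\mathfrak{D}}(\bar{w})$. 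For each small $\tau > 0$, using the $C^{1,\alpha}$-cone reducing map $\Xi$ at $\bar w$ together with surjectivity of $D\Xi(\bar{w})$ and the $\alpha$-H\"older continuity of $D\Xi$, a quantitative Lyusternik--Graves-type adjustment produces a test point $w_\tau \in \mathfrak{D}$ with $w_\tau - \bar{w} = \tau e + \mathcal{O}(\tau^{1+\alpha})$. Hence $\sigma_{\mathfrak{D}}(x) - \langle\bar{w}, x\rangle \geq \langle x, w_\tau - \bar{w}\rangle \geq \tau\|x_T\| - C\tau^{1+\alpha}$, and maximizing in $\tau>0$ yields the claimed exponent $(\alpha+1)/\alpha$, so the second distance is bounded by $C_2(h(x)-h(\bar{x}))^{\alpha/(\alpha+1)}$.

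Assembling, ${\rm dist}(x,\Argmin h) \leq C_1\sqrt{h(x)-h(\bar{x})} + C_2(h(x)-h(\bar{x}))^{\alpha/(\alpha+1)}$; since $\alpha\in(0,1]$ implies $\tfrac12 \geq \tfrac{\alpha}{\alpha+1}$, the second term dominates on a small sublevel set, giving the required error bound and hence the KL property at $\bar{x}$ with exponent $\tfrac{1}{\alpha+1}$. The main obstacle is precisely the H\"older sharpness step for the Fenchel gap: the quantitative implicit-function / Lyusternik--Graves construction of $w_\tau\in\mathfrak{D}$, with the correct $\mathcal{O}(\tau^{1+\alpha})$ correction extracted from the $\alpha$-H\"older regularity of $D\Xi$ and the surjectivity of $D\Xi(\bar{w})$, is the geometric input that converts the local structure encoded in \eqref{thm-kl-g-2} into the sharp exponent $(\alpha+1)/\alpha$; everything else is convex-analytic bookkeeping.
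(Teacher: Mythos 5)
Your proposal is correct in substance and shares the overall skeleton of the paper's argument: the first-order optimality plus Fenchel--Young for \eqref{thm-kl-g-1}, Lemma~\ref{lemma-metric} for \eqref{thm-kl-g-2}, the averaging argument identifying $\Argmin h=\mathcal{T}^{-1}\{\mathcal{T}\bar{x}\}\cap\mathcal{N}_{\mathfrak{D}}(\bar{w})$, the split of ${\rm dist}(x,\Argmin h)$ via bounded linear regularity and Hoffman's bound, the two-term decomposition of $h(x)-h(\bar{x})$ (the paper's identity \eqref{thm-kl-g-12}), and the final appeal to the error-bound characterization of KL from \cite{Bolte-Nguyen-Peypouquet-Suter-2017}. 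Where you genuinely diverge is the crux you correctly isolate: the H\"older growth $\sigma_{\mathfrak{D}}(x)-\langle\bar{w},x\rangle\gtrsim{\rm dist}(x,\mathcal{N}_{\mathfrak{D}}(\bar{w}))^{(\alpha+1)/\alpha}$. The paper obtains this \emph{from} the calmness estimate \eqref{thm-kl-g-2}: it converts \eqref{thm-kl-g-2} into metric subregularity of $\partial F$ for $F:=\sigma_{\mathfrak{D}}-\langle\bar{w},\cdot\rangle$ (using \cite[Exercise 3H.4]{Dontchev-Rockafellar-2009} to drop the localization in $w$), then chains through \eqref{VA} to get the KL inequality for $F$ with exponent $\frac{1}{1+\alpha}$, and finally invokes \cite[Theorem 5]{Bolte-Nguyen-Peypouquet-Suter-2017} to recover the growth. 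You instead prove the growth directly and primally: Moreau-decompose $x$ along $\mathcal{N}_{\mathfrak{D}}(\bar{w})$ and its polar, and build test points $w_\tau\in\mathfrak{D}$ with $w_\tau-\bar{w}=\tau e+\mathcal{O}(\tau^{1+\alpha})$ via the reducing map $\Xi$ and a Lyusternik--Graves correction, then optimize in $\tau$. This construction does go through (the $\mathcal{O}(\tau^{1+\alpha})$ defect of $\Xi(\bar w+\tau e)$ from $K$ follows from the $\alpha$-H\"older continuity of $D\Xi$ and $D\Xi(\bar w)e\in\mathcal{T}_K(0)=K$, and surjectivity of $D\Xi(\bar w)$ gives the metric regularity needed to pull the correction back to $\mathfrak{D}$, uniformly over unit tangent vectors $e$), but it is exactly the step you leave as a sketch and it must be written out with uniform constants. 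What each route buys: the paper's is shorter because it recycles \eqref{thm-kl-g-2} and off-the-shelf equivalences between calmness, subregularity, KL and growth; yours is more self-contained and makes the geometric origin of the exponent $\frac{\alpha+1}{\alpha}$ transparent, at the cost of a quantitative Lyusternik--Graves argument. A minor cosmetic difference: for the $\|\mathcal{T}x-\mathcal{T}\bar{x}\|$ term the paper extracts the exponent $\frac{\alpha}{\alpha+1}$ directly by factoring $\|\mathcal{T}x-\mathcal{T}\bar{x}\|^{\frac{1+\alpha}{\alpha}}=\|\mathcal{T}x-\mathcal{T}\bar{x}\|^{\frac{1}{\alpha}-1}\|\mathcal{T}x-\mathcal{T}\bar{x}\|^{2}$ on a bounded neighborhood, whereas you take the exponent $\frac12$ and absorb it using $\frac12\ge\frac{\alpha}{\alpha+1}$ on a small sublevel set; both are fine.
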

\begin{proof}
The relation \eqref{thm-kl-g-1}  follows from the same argument as in \cite[Theorem 4.4]{Yu-Li-Pong-2022}. 
The relation  \eqref{thm-kl-g-2} follows from applying Lemma \ref{lemma-metric} to $(\bar{x},\bar{w})$ with $\bar{x}\in\mathcal{N}_{\mathfrak{D}}(\bar{w})$.

We now prove the alleged KL property at $\bar{x}$.
We first show that  \eqref{thm-kl-g-2} implies
\begin{align}\label{thm-kl-g-3}
{\rm dist}(x,(\partial\sigma_{\mathfrak{D}})^{-1}(\bar{w}))\leq \kappa {\rm dist}(\bar{w}, \partial \sigma_{\mathfrak{D}}(x)\cap B(\bar{w},\rho))^\alpha\ \ \mbox{for all }x\in B(\bar{x},\delta).
\end{align}
Our argument is similar to the proof of \cite[Theorem~3H.3]{Dontchev-Rockafellar-2009}.
Specifically, notice that the above display holds trivially if $\partial\sigma_{\mathfrak{D}}(x)\cap B(\bar{w},\rho)=\emptyset$.
Now, consider the case $\partial\sigma_{\mathfrak{D}}(x)\cap B(\bar{w},\rho)\neq \emptyset$.  Using $(\partial\sigma_{\mathfrak{D}})^{-1}=\mathcal{N}_{\mathfrak{D}}$ (see \cite[Example~11.4]{Rockafellar-Wets-2009}),
we see that
\[
w\in \partial \sigma_{\mathfrak{D}}(x)\cap B(\bar{w},\rho), x\in B(\bar{x},\delta)\Longleftrightarrow
w\in B(\bar{w},\rho), x\in \mathcal{N}_{\mathfrak{D}}(w)\cap B(\bar{x},\delta).
\]
Consequently, for any $x,w$ satisfying $w\in \partial \sigma_{\mathfrak{D}}(x)\cap B(\bar{w},\rho)$ and $x\in B(\bar{x},\delta)$, we have from the above display and \eqref{thm-kl-g-2} {\color{blue}that}
\[
{\rm dist}(x,(\partial\sigma_{\mathfrak{D}})^{-1}(\bar{w}))\leq \kappa\|w-\bar{w}\|^\alpha.
\]
Taking infimum with respect to $w\in \partial \sigma_{\mathfrak{D}}(x)\cap B(\bar{w},\rho)$, we obtain \eqref{thm-kl-g-3}.

In view of \cite[Exercise 3H.4]{Dontchev-Rockafellar-2009},
the inequality \eqref{thm-kl-g-3} implies that there exists $\delta'\in(0,\delta]$ such that
\begin{align}\label{thm-kl-g-4}
{\rm dist}(x,(\partial\sigma_{\mathfrak{D}})^{-1}(\bar{w}))\leq \kappa {\rm dist}(\bar{w}, \partial \sigma_{\mathfrak{D}}(x))^\alpha\ \ \mbox{for all }x\in B(\bar{x},\delta').
\end{align}

Now, define a proper closed convex function
\[
F(x):=\sigma_{\mathfrak{D}}(x)-\langle \bar{w}, x\rangle.
\]
Using \eqref{young}, the following equivalence holds
\begin{align}\label{thm-kl-g-5}
z\in (\partial\sigma_{\mathfrak{D}})^{-1}(\bar{w})\Leftrightarrow\bar{w}\in\partial\sigma_{\mathfrak{D}}(z)\Leftrightarrow0\in \partial F(z)\Leftrightarrow z\in (\partial F)^{-1}(0).
\end{align}
Using \eqref{thm-kl-g-5}, we can rewrite \eqref{thm-kl-g-4}  as
\begin{align}\label{thm-kl-g-6}
{\rm dist}(x,(\partial F)^{-1}(0))\leq \kappa {\rm dist}(0, \partial F(x))^\alpha\ \ \mbox{for all }x\in B(\bar{x},\delta').
\end{align}
Then, we have for any $x\in B(\bar{x},\delta')$ that
\begin{align*}
F(x)-F(\bar{x})
\leq {\rm dist}(0, \partial F(x)) {\rm dist}(x,(\partial F)^{-1}(0))\leq \kappa{\rm dist}(0, \partial F(x))^{1+\alpha},
\end{align*}
where the first inequality follows from \eqref{VA},\footnote{Note that $(\partial F)^{-1}(0)\neq \emptyset$ since it contains $\bar x$ in view of \eqref{thm-kl-g-1}, \eqref{thm-kl-g-5} and the fact that $(\partial\sigma_{\mathfrak{D}})^{-1}=\mathcal{N}_{\mathfrak{D}}$ (see \cite[Example~11.4]{Rockafellar-Wets-2009}).} the second inequality  is due to \eqref{thm-kl-g-6}.
The above display implies  that $F$ satisfies the KL property at $\bar{x}$ with exponent $\frac{1}{1+\alpha}$, which by \cite[Theorem 5]{Bolte-Nguyen-Peypouquet-Suter-2017} also implies the existence of $\bar\kappa>0$ such that
\begin{align*}
{\rm dist}(x, (\partial F)^{-1}(0))\leq \bar{\kappa}(F(x)-F(\bar{x}))^{1-\frac{1}{1+\alpha}}\ \ \mbox{for all }x\in B(\bar{x},\delta').
\end{align*}
Using the definition of $F$ and \eqref{thm-kl-g-5}, the above display can be equivalently written as
\begin{align}\label{thm-kl-g-7}
{\rm dist}(x, \mathcal{N}_{\mathfrak{D}}(\bar{w}))&={\rm dist}(x, (\partial\sigma_{\frak D})^{-1}(\bar{w})) = {\rm dist}(x, (\partial F)^{-1}(0))\notag\\
&\leq \bar{\kappa}(\sigma_{\mathfrak{D}}(x)-\sigma_{\mathfrak{D}}(\bar{x})-\langle\bar{w}, x-\bar{x}\rangle)^{1-\frac{1}{1+\alpha}}\ \ \mbox{for all }x\in B(\bar{x},\delta'),
\end{align}
where the first equality follows from \cite[Example~11.4]{Rockafellar-Wets-2009}.
Next, notice that we have $\mathcal{T}x = \mathcal{T}\bar x$ for all $x\in \Argmin h$ thanks to the strict convexity of $l$,\footnote{{\color{blue}Indeed, suppose that $\mathcal{T} x \neq \mathcal{T} \bar{x}$ for some $x \in{\rm Argmin}\, h$. Then we have the following  contradiction:
\begin{align*}
\inf h = (h(x)+h(\bar{x}))/2> l((\mathcal{T}x+\mathcal{T}\bar{x})/2)+\langle v, (x+\bar{x})/2\rangle+\sigma_{\mathfrak{D}}((x+\bar{x})/2) \geq \inf h,
\end{align*}
where the first inequality follows from the strict convexity of $l$, convexity of $\langle v,\cdot\rangle+\sigma_{\mathfrak{D}}(\cdot)$ and  the assumption that $\mathcal{T} x \neq \mathcal{T} \bar{x}$, and the second inequality follows from
the fact that $(x+\bar{x})/2 \in {\rm dom}\, h$ as $h$ is convex.}
} and moreover
\begin{align*}
\Argmin h&=\{x: 0\in \partial h(x)\}=\{x:  \mathcal{T}x=\mathcal{T}\bar{x}, x\in \mathcal{N}_{\mathfrak{D}}(-\mathcal{T}^*\nabla l(\mathcal{T}\bar{x})-v)\}\\
&=\{x:  \mathcal{T}x=\mathcal{T}\bar{x}, x\in \mathcal{N}_{\mathfrak{D}}(\bar{w})\}.
\end{align*}
Also, for any bounded convex neighborhood $U\subseteq  B(\bar{x},\delta')$ of $\bar{x}$, we have for any $x\in U$ that
\begin{align}
\|\mathcal{T}x-\mathcal{T}\bar{x}\|^{\frac{1+\alpha}{\alpha}}
&=\|\mathcal{T}x-\mathcal{T}\bar{x}\|^{\frac{1}{\alpha}-1}\|\mathcal{T}x-\mathcal{T}\bar{x}\|^{2}\notag \leq
\bigg(\sup_{u\in U}\|\mathcal{T}u-\mathcal{T}\bar{x}\|^{\frac{1}{\alpha}-1}\bigg)\|\mathcal{T}x-\mathcal{T}\bar{x}\|^{2} \notag\\
&\leq \bar{M}(l(\mathcal{T}x)-l(\mathcal{T}\bar{x})-\langle \mathcal{T}^*\nabla l(\mathcal{T}\bar{x}),x-\bar{x}\rangle),   \label{thm-kl-g-8}
\end{align}
where the last inequality holds for some $\bar M > 0$ thanks to the strongly convexity of $l$ on $U$ and the fact that $\sup_{u\in U}\|\mathcal{T}u-\mathcal{T}\bar{x}\|^{\frac{1}{\alpha}-1}<\infty$ (thanks to $\alpha\in (0,1]$).
Now, using the bounded linear regularity condition and the Hoffman's error bound, there exist $\beta_1> 0$ and $\beta_2>0$ such that whenever $x\in U$, we have
\begin{align}
&{\rm dist}(x,\mathcal{T}^{-1}\{\mathcal{T}\bar{x}\}\cap  \mathcal{N}_{\mathfrak{D}}(\bar{w}) )\leq \beta_1[{\rm dist}(x,\mathcal{T}^{-1}\{\mathcal{T}\bar{x}\})+ {\rm dist}(x,\mathcal{N}_{\mathfrak{D}}(\bar{w}))], \label{thm-kl-g-9}\\
&{\rm dist}(x,\mathcal{T}^{-1}\{\mathcal{T}\bar{x}\})\leq \beta_2\|\mathcal{T}x-\mathcal{T}\bar{x}\|. \label{thm-kl-g-10}
\end{align}
Then, it follows that for any $x\in U$
\begin{align}
{\rm dist}(x,\Argmin h)&={\rm dist}(x,\mathcal{T}^{-1}\{\mathcal{T}\bar{x}\}\cap  \mathcal{N}_{\mathfrak{D}}(\bar{w}))\!\leq\! \beta_1[\beta_2\|\mathcal{T}x-\mathcal{T}\bar{x}\|+ {\rm dist}(x,\mathcal{N}_{\mathfrak{D}}(\bar{w}))] \notag\\
&\leq \beta_1[\beta_2  \bar{M}^{\frac{\alpha}{\alpha+1}}(l(\mathcal{T}x)-l(\mathcal{T}\bar{x})-\langle \mathcal{T}^*\nabla l(\mathcal{T}\bar{x}),x-\bar{x}\rangle)^{1-\frac{1}{\alpha+1}}\notag\\
  &\quad+ \bar{\kappa}(\sigma_{\mathfrak{D}}(x)-\sigma_{\mathfrak{D}}(\bar{x})-\langle\bar{w}, x-\bar{x}\rangle)^{1-\frac{1}{\alpha+1}}],
   \label{thm-kl-g-11}
\end{align}
where the first inequality follows from \eqref{thm-kl-g-9}-\eqref{thm-kl-g-10}, and the last inequality follows from \eqref{thm-kl-g-7} and \eqref{thm-kl-g-8}.
Finally, notice that
\begin{equation}\label{thm-kl-g-12}
h(x)-h(\bar{x})=l(\mathcal{T}x)-l(\mathcal{T}\bar{x})-\langle \mathcal{T}^*\nabla l(\mathcal{T}\bar{x}),x-\bar{x}\rangle+\sigma_{\mathfrak{D}}(x)-\sigma_{\mathfrak{D}}(\bar{x})-\langle \bar{w},x-\bar{x}\rangle.
\end{equation}
Combining \eqref{thm-kl-g-11}-\eqref{thm-kl-g-12} and the inequality $a^p+b^p\leq 2^{1-p}(a+b)^p$ for any $a,b\geq 0$ and $p\in(0,1]$, we deduce that there exists positive constant $c$ such that
\[
{\rm dist}(x,\Argmin h)\leq  c(h(x)-h(\bar{x}))^{{1-\frac{1}{\alpha+1}}}\ \ \mbox{for all }x\in U.
\]
The desired result now follows upon invoking \cite[Theorem 5]{Bolte-Nguyen-Peypouquet-Suter-2017}.
\end{proof}

Verifying $C^{1,\alpha}$-cone reducibility directly from the definition can be nontrivial. 
The following proposition is handy for checking whether a set is $C^{1,\alpha}$-cone reducible. It is an analogue of \cite[Proposition 3.2]{Shapiro-2003}, which studied $C^2$-cone reducible sets.
\begin{proposition}\label{prop:C11}
Let $V:=\{ x : G(x)\in K\}$, where $G:\mathbb{X}\rightarrow \mathbb{Y}$ is continuously differentiable with locally $\alpha$-H\"older-continuous derivative for some $\alpha\in (0,1]$, and $K\subseteq \mathbb{Y}$ be a closed convex set.
If $K$ is $C^{1,\alpha}$-cone reducible at $y_0=G(x_0)$ and
\[
D G(x_0)\mathbb{X}+{\rm lin}\, T_K(y_0)=\mathbb{Y},
\]
where $T_K(y_0)$ is the tangent cone of $K$ at $y_0$ and ${\rm lin}\, T_K(y_0):=T_K(y_0)\cap -T_K(y_0)$. Then $V$ is $C^{1,\alpha}$-cone reducible at $x_0$.
\end{proposition}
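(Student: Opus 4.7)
The plan is to mimic the construction in the proof of Proposition~3.2 of \cite{Shapiro-2003} for the $C^2$ case, with the regularity track adjusted to $C^{1,\alpha}$. Since $K$ is $C^{1,\alpha}$-cone reducible at $y_0=G(x_0)$, pick $\rho_1>0$, a pointed closed convex cone $\tilde K$ in a finite-dimensional Hilbert space $\mathbb{Z}$, and a mapping $\Xi_K:\mathbb{Y}\to\mathbb{Z}$ that is continuously differentiable on $B(y_0,\rho_1)$ with $\alpha$-H\"older-continuous derivative there, such that $\Xi_K(y_0)=0$, $D\Xi_K(y_0)$ is surjective, and $K\cap B(y_0,\rho_1)=\Xi_K^{-1}(\tilde K)\cap B(y_0,\rho_1)$. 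The candidate reducing map for $V$ at $x_0$ (with the same target cone $\tilde K$) is the composition $\Phi:=\Xi_K\circ G$.

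By continuity of $G$, I can choose $\rho>0$ so small that $G(B(x_0,\rho))\subseteq B(y_0,\rho_1)$. For $x\in B(x_0,\rho)$ the reduction identity for $K$ gives $x\in V\iff G(x)\in K\iff \Xi_K(G(x))\in \tilde K\iff \Phi(x)\in \tilde K$, so $V\cap B(x_0,\rho)=\{x:\Phi(x)\in \tilde K\}\cap B(x_0,\rho)$ and $\Phi(x_0)=0$. The chain rule gives $D\Phi(x)=D\Xi_K(G(x))\circ DG(x)$; since $DG$ is locally $\alpha$-H\"older by hypothesis, $D\Xi_K$ is $\alpha$-H\"older on $B(y_0,\rho_1)$, and $G$ is locally Lipschitz (being $C^1$), a standard composition/product estimate shows that $D\Phi$ is locally $\alpha$-H\"older on a possibly smaller ball around $x_0$.

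The main obstacle is showing that $D\Phi(x_0)=D\Xi_K(y_0)\circ DG(x_0)$ is surjective onto $\mathbb{Z}$; this is where the constraint qualification $DG(x_0)\mathbb{X}+{\rm lin}\,T_K(y_0)=\mathbb{Y}$ enters. Since $D\Xi_K(y_0)$ is surjective, Robinson's CQ for the constraint system $\Xi_K(\cdot)\in \tilde K$ holds trivially at $y_0$, and the standard tangent-cone preimage rule combined with $K\cap B(y_0,\rho_1)=\Xi_K^{-1}(\tilde K)\cap B(y_0,\rho_1)$ yields
\[
T_K(y_0)=[D\Xi_K(y_0)]^{-1}(T_{\tilde K}(0))=[D\Xi_K(y_0)]^{-1}(\tilde K),
\]
where the last equality uses that $\tilde K$ is a closed convex cone containing $0$. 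Taking the lineality space on both sides and invoking the pointedness of $\tilde K$ (so that $\tilde K\cap(-\tilde K)=\{0\}$) gives ${\rm lin}\,T_K(y_0)=\ker D\Xi_K(y_0)$. Applying the surjective linear map $D\Xi_K(y_0)$ to the CQ identity then annihilates the lineality term and produces
\[
\mathbb{Z}=D\Xi_K(y_0)\,\mathbb{Y}=D\Xi_K(y_0)\,DG(x_0)\,\mathbb{X},
\]
so $D\Phi(x_0)$ is surjective. Together with the reduction identity, $\Phi(x_0)=0$, and the H\"older regularity of $D\Phi$, this verifies every item of Definition~\ref{def-cone-reducible} for $V$ at $x_0$, completing the proof.
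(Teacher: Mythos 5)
Your proposal is correct and follows essentially the same route as the paper, which simply defers to the argument of Proposition~3.2 in Shapiro's work: compose the reducing map $\Xi_K$ with $G$, verify the local reduction identity and the $\alpha$-H\"older continuity of $D(\Xi_K\circ G)$, and use the constraint qualification together with ${\rm lin}\,T_K(y_0)=\ker D\Xi_K(y_0)$ (from pointedness of $\tilde K$) to get surjectivity of the composed derivative. You have in fact spelled out the details that the paper leaves implicit, and each step checks out.
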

\begin{proof}
The proof follows the same argument as in \cite[Proposition 3.2]{Shapiro-2003}.
\end{proof}

When $\mathbb{X}=\R^n$, $\mathbb{Y}=\R^m$, $K=-\R^m_+$ and $G(x)=(g_1(x),\ldots, g_m(x))$ with each $g_i$ being continuously differentiable with locally $\alpha$-H\"older-continuous derivative, since $K=-\R^m_+$ is $C^2$-cone reducible \cite{Shapiro-2003}, we deduce from Proposition~\ref{prop:C11} that the set
\[
V:=\{ x\in \R^n : g_i(x)\leq 0, i=1,\ldots,m \}
\]
is $C^{1,\alpha}$-cone reducible at $x_0$ if  $\{\nabla g_i(x_0): i\in {\cal I}\}$ is linearly independent, where ${\cal I}:=\{ i : g_i(x_0)=0\}$.
As further concrete examples utilizing Proposition~\ref{prop:C11}, we show below that the $p$-cone and the $p$-norm ball with $p\in (1,\infty)$ are $C^{1,\alpha}$-cone reducible for some $\alpha\in (0,1]$. Recall that the $p$-cone $\mathcal{K}_p^{n+1}\subseteq \R^{n+1}$ is defined as
\[
\mathcal{K}_p^{n+1}:=\{(x,r)\in\R^{n}\times \R: r\geq \|x\|_p \}.
\]

\begin{example}[$\mathcal{K}_p^{n+1}$, $p \in (1,\infty)$, is $C^{1,\alpha}$-cone reducible]\label{c-1a-cone}
Notice that ${\cal K}_p^{n+1} = \{(x,r)\in\R^{n}\times \R: G(x,r)\le 0\}$ with $G(x,r):=\|x\|_p-r$.
As pointed out in \cite{Shapiro-2003}, any closed convex cone is $C^2$-cone reducible at the origin and its relative interior, and the corresponding $\Xi$ can be chosen to be linear.
So, we only need to show that $\mathcal{K}_p^{n+1}$ is $C^{1,\alpha}$-cone reducible  at every nonzero boundary point.
Note that when $x\neq 0$, we have at any $(x,r)$ satisfying $r\ge \|x\|_p$ that
\[
\nabla G(x,r)=
\|x\|_p^{1-p}\cdot
\begin{bmatrix}
  {\rm sgn}(x_1)|x_1|^{p-1} & \cdots & {\rm sgn}(x_n)|x_n|^{p-1} &  -\|x\|_p^{p-1}
\end{bmatrix}^T.
\]
Thus, for any $(x,r)\neq 0$ on the boundary of $\mathcal{K}_p^{n+1}$, one has $r = \|x\|_p$ and $\nabla G(x,\|x\|_p)\neq 0$; hence, $\{\nabla G(x,\|x\|_p)\}$ is linearly independent.

From the discussion preceding this example, it now remains to show that $\nabla G$ is locally H\"{o}lder continuous at any nonzero boundary point of $\mathcal{K}_p^{n+1}$. Using the display above, it is routine to check that $\nabla G$ is locally $(p-1)$-H\"{o}lder continuous when $p \in (1,2)$ and is locally Lipschitz continuous (and hence $1$-H\"{o}lder continuous) when $p \in [2,\infty)$.
Therefore, $\mathcal{K}_p^{n+1}$ is $C^{1,\alpha}$-cone reducible for any $p\in(1,+\infty)$ with $\alpha=\min\{1, p-1\}$.
\end{example}
\begin{example}[The $p$-norm ball, $p \in (1,\infty)$, is $C^{1,\alpha}$-cone reducible]\label{c-1a-ball}
Similar to the discussion in Example \ref{c-1a-cone}, one can show that the $p$-norm ball  $\{ x \in \R^n : \|x-x_0\|_p \leq \beta\}$ (center at $x_0$ with radius $\beta>0$) is  $C^{1,\alpha}$-cone reducible with  $\alpha=\min\{1, p-1\}$.
\end{example}

\subsection{KL properties of the function $d$ in \eqref{problem-duald}}
In this subsection, we show that the function $d$ in \eqref{problem-duald} satisfies the KL property with an explicit exponent under suitable assumptions on \eqref{problem}.
Specifically, we consider the following assumption.
\begin{assumption}\label{assu-kl}
Consider \eqref{problem}. Suppose the following conditions hold.
\begin{enumerate}[{\rm (i)}]
\item Each $C_i$ is a $C^{1,\alpha}$-cone reducible closed convex set with $\alpha \in(0,1]$;\footnote{Using this, one can verify directly from definition that $D$ is $C^{1,\alpha}$-cone reducible for the same $\alpha$.}
\item $\bigcap_{i=1}^{\ell} A_i^{-1}{\rm ri}\,C_i \neq \emptyset$;
\item $0\in x^*-\bar{v}+ {\rm ri}\, \partial (\sum_{i=1}^\ell \delta_{A_i^{-1}C_i })(x^*)$, where $x^*$ is the unique solution of \eqref{problem}.
\end{enumerate}
\end{assumption}

The following proposition gives the KL exponent of  the function $d$ in \eqref{problem-duald}.
\begin{proposition}[KL exponent of $d$]\label{prop-kl}
Consider  \eqref{problem}.  
Suppose that Assumption~\ref{assu-kl} holds.
Then the function $d$ in \eqref{problem-duald} is a  KL function with exponent $\frac{1}{\alpha+1}$.
\end{proposition}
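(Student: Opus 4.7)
The plan is to apply Theorem~\ref{thm-kl-g} directly to $d$ at every point $\bar{\bm y}\in\Argmin d$, with the identifications $\mathcal{T}={\bm A}^T$, $l(z)=\frac12\|z-\bar v\|^2-\frac12\|\bar v\|^2$, $v=0$ and $\mathfrak{D}=D$. Under these choices the function $h$ of Theorem~\ref{thm-kl-g} coincides with $d$. The set $D$ is $C^{1,\alpha}$-cone reducible by Assumption~\ref{assu-kl}(i) together with its footnote, and $l$ is a strongly convex quadratic with globally Lipschitz gradient, so the structural hypotheses on $\mathfrak{D}$ and $l$ are met.

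The substantive task is then to verify the bounded linear regularity of the pair $\{\mathcal{T}^{-1}\{\mathcal{T}\bar{\bm y}\},\mathcal{N}_D(-\mathcal{T}^*\nabla l(\mathcal{T}\bar{\bm y}))\}$ at each $\bar{\bm y}\in\Argmin d$. Proposition~\ref{prop-primal-dual} gives ${\bm A}^T\bar{\bm y}=\bar v-x^*$ for every $\bar{\bm y}\in\Argmin d$, so $\nabla l(\mathcal{T}\bar{\bm y})=-x^*$, the first set is the affine subspace $\{{\bm y}:{\bm A}^T{\bm y}=\bar v-x^*\}$, and the second set is $\mathcal{N}_D({\bm A}x^*)$; both are independent of the particular $\bar{\bm y}$ chosen. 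Because the first set is polyhedral, \cite[Corollary~3]{Bauschke-Borwein-Li-1999} reduces the task to exhibiting a single point in $({\bm A}^T)^{-1}\{\bar v-x^*\}\cap{\rm ri}\,\mathcal{N}_D({\bm A}x^*)$.

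To produce such a point I would translate Assumption~\ref{assu-kl}(iii) from the primal side to the dual side. Under Assumption~\ref{assu-kl}(ii), the Moreau--Rockafellar sum rule and the chain rule for the normal cone under a linear map both apply at $x^*$, yielding
\[
\partial\Big(\sum_{i=1}^\ell \delta_{A_i^{-1}C_i}\Big)(x^*)=\sum_{i=1}^\ell A_i^T\mathcal{N}_{C_i}(A_ix^*)={\bm A}^T\mathcal{N}_D({\bm A}x^*).
\]
Since linear images commute with relative interiors \cite[Theorem~6.6]{Rockafellar-1970}, Assumption~\ref{assu-kl}(iii) becomes $\bar v-x^*\in{\bm A}^T\,{\rm ri}\,\mathcal{N}_D({\bm A}x^*)$, which furnishes a $\tilde{\bm y}\in{\rm ri}\,\mathcal{N}_D({\bm A}x^*)$ with ${\bm A}^T\tilde{\bm y}=\bar v-x^*$. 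Bounded linear regularity now follows, and Theorem~\ref{thm-kl-g} delivers the KL property with exponent $\frac1{\alpha+1}$ at every point of $\Argmin d$. For any $\bar{\bm y}\in{\rm dom}\,\partial d\setminus\Argmin d$, the KL property (with any exponent in $[0,1)$) holds automatically since $d$ is convex and $0\notin\partial d(\bar{\bm y})$ forces ${\rm dist}(0,\partial d(\cdot))$ to be bounded away from zero on a neighborhood of $\bar{\bm y}$, so a linear $\varphi$ suffices.

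In my view the main obstacle is the primal-to-dual translation in the previous paragraph: Assumption~\ref{assu-kl}(iii) is a subdifferential statement at the \emph{primal} minimizer $x^*$, while bounded linear regularity is a property of two sets living in the \emph{dual} space. Pushing the relative interior through the normal cone chain rule and through the linear map ${\bm A}^T$ is where Assumption~\ref{assu-kl}(ii) becomes indispensable, because without it the identity $\partial\bigl(\sum_i\delta_{A_i^{-1}C_i}\bigr)(x^*)={\bm A}^T\mathcal{N}_D({\bm A}x^*)$ underlying the argument would be unavailable.
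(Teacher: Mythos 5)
Your proposal is correct and follows essentially the same route as the paper: apply Theorem~\ref{thm-kl-g} with $\mathcal{T}={\bm A}^T$, $l(z)=\frac12\|z-\bar v\|^2-\frac12\|\bar v\|^2$, $\mathfrak{D}=D$, and verify bounded linear regularity by pushing Assumption~\ref{assu-kl}(iii) through the sum/chain rules and the relative-interior calculus of \cite[Theorem~6.6]{Rockafellar-1970} to produce a point of $({\bm A}^T)^{-1}\{\bar v-x^*\}\cap{\rm ri}\,\mathcal{N}_D({\bm A}x^*)$, then invoke \cite[Corollary~3]{Bauschke-Borwein-Li-1999}. The only cosmetic difference is that the paper handles non-minimizers by citing \cite[Lemma~2.1]{Li-Pong-2018} rather than spelling out the closed-graph argument you give, and it constructs the intersection point coordinatewise via ${\rm ri}\,\mathcal{N}_D({\bm A}x^*)=\prod_i{\rm ri}\,\mathcal{N}_{C_i}(A_ix^*)$, which is equivalent to your formulation.
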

\begin{proof}
Fix any ${\bm y}^*=(y_1^*,\ldots, y_\ell^*)\in \Argmin d$.
Then Proposition~\ref{prop-primal-dual} implies that
\begin{align}\label{prop-kl-1}
x^*=\bar{v}-{\bm A}^T{\bm y}^*,
\end{align}
where $x^*$ is the unique solution of \eqref{problem}.

Recall from \eqref{problem} that $f(x)=\frac{1}{2}\|x-\bar{v}\|^2$.
We first  show that
\begin{align}\label{prop-kl-2}
\{({\bm A^T})^{-1}\{{\bm A}^T{\bm y}^*\},\mathcal{N}_D(-{\bm A}\nabla f({\bm A}^T{\bm y}^*))\}
\end{align}
is boundedly linearly regular at ${\bm y}^*$.
To this end, we first observe that
\begin{align}\label{prop-kl-3}
({\bm A^T})^{-1}\{{\bm A}^T{\bm y}^*\}=(y_1^*,\ldots,y_\ell^*)+{\rm ker}\,{\bm A}^T.
\end{align}
Moreover, using \eqref{prop-kl-1} and noting $\nabla f({\bm A}^T{\bm y}^*) = {\bm A}^T{\bm y}^* - \bar v$, we see that
\begin{equation}
\mathcal{N}_D(-{\bm A}\nabla f({\bm A}^T{\bm y}^*))=\mathcal{N}_D({\bm A}x^*)
=\mathcal{N}_{C_1}(A_1x^*) \times \cdots\times \mathcal{N}_{C_\ell}(A_\ell x^*). \label{prop-kl-4}
\end{equation}
Furthermore, we deduce using Assumption~\ref{assu-kl}(iii) that
\begin{align*}
\textstyle
\bar v - x^*\!\in\!  {\rm ri}\, \partial\left(\sum_{i=1}^\ell\delta_{A_i^{-1}C_i}\right)(x^*)\!=\! {\rm ri}\, \left(\sum_{i=1}^\ell A_i^T \mathcal{N}_{C_i}(A_i x^*)\right)\!=\!\sum_{i=1}^\ell A_i^T {\rm ri}\, \mathcal{N}_{C_i}(A_i x^*),
\end{align*}
where the first equality follows from Assumption \ref{assu-kl}(ii) and Theorems~23.8 and 23.9 of \cite{Rockafellar-1970}, and the second equality follows from \cite[Theorem~6.6]{Rockafellar-1970}.
Combining the above display with \eqref{prop-kl-4}, we conclude that there exists
\begin{align}
\!\!{\bm u}^*\!=\!(u_1^*,\ldots, u_\ell^*)&\!\in\!{\rm ri}\,\mathcal{N}_{C_1}(A_1 x^*) \times \cdots\times  {\rm ri}\, \mathcal{N}_{C_\ell}(A_\ell x^*)\!=\!{\rm ri}\, \mathcal{N}_D(-{\bm A}\nabla f({\bm A}^T{\bm y}^*))\label{prop-kl-5}
\end{align}
such that
$
x^*-\bar{v}+{\bm A}^T {\bm u}^*=0
$.
This last equation together with \eqref{prop-kl-1} implies that ${\bm u}^*-{\bm y}^*\in {\rm ker}\, {\bm A}^T$. Consequently,
\begin{align}\label{prop-kl-6}
{\bm u}^*={\bm y}^*+({\bm u}^*-{\bm y}^*) \in {\bm y}^* + \ker {\bm A}^T.
\end{align}
Using \eqref{prop-kl-3}, \eqref{prop-kl-5} and \eqref{prop-kl-6}, we see that ${\bm u}^*\!\in\!({\bm A^T})^{-1}\!\{\!{\bm A}^T\!{\bm y}^*\}\cap{\rm ri}\,\mathcal{N}_D(-{\bm A}\nabla f({\bm A}^T{\bm y}^*))$, which implies that
\[
({\bm A^T})^{-1}\{{\bm A}^T{\bm y}^*\}\cap{\rm ri}\,\mathcal{N}_D(-{\bm A}\nabla f({\bm A}^T{\bm y}^*))\neq \emptyset.
\]
This and the polyhedrality of $({\bm A^T})^{-1}\{{\bm A}^T{\bm y}^*\}$ give the  bounded linear regularity for \eqref{prop-kl-2} at ${\bm y}^*$ (see \cite[Corollary 3]{Bauschke-Borwein-Li-1999}).
We can then deduce from Assumption~\ref{assu-kl}(i) and Theorem~\ref{thm-kl-g} that $d$ satisfies the KL property with exponent $\frac{1}{\alpha+1}$ at ${\bm y}^*$.
The desired conclusion now follows from the arbitrariness of ${\bm y^*}\in \Argmin d$ and \cite[Lemma~2.1]{Li-Pong-2018}.
\end{proof}

The next theorem establishes the uniformized growth condition and KL property for the function $d$, which is useful in deriving the Luo-Tseng type error bound in the next section.
We remark that existing results on uniformized KL property in \cite[Lemma 6]{Bolte-Sabach-Teboulle-2014} concern the KL property over a \emph{compact} set.
Since the set $\Argmin d$  can be unbounded in general, these existing results cannot be applied directly. Instead, we establish the uniformity result by noting that $d$ stays constant when moving along $E^\perp$. This is formally registered in the following auxiliary lemma, which is an immediate consequence of Corollary~2.5.5 and Theorem~2.5.3 of \cite{Auslender-Alfred-Teboulle-2006} because $d = q^*$ (see Proposition~\ref{prop-awb}).
\begin{lemma}\label{lemma-constant}
Consider \eqref{problem}. Let the function $d$ be given in \eqref{problem-duald} and $E$ be given in \eqref{prop-awb-3}. Suppose that Assumption \ref{assu-kl}(ii) holds.
Then for any ${\bm y}\in {\rm dom} \, d$, it holds that $d({\bm y} + {\bm u})=d({\bm y})$ whenever ${\bm u}\in E^\perp$.
\end{lemma}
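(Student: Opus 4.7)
The plan is to exploit the identification $d = q^*$ from Proposition~\ref{prop-awb} together with the elementary fact that the convex conjugate of a function whose effective domain is contained in a subspace $E$ is invariant under translations by $E^\perp$. Under Assumption~\ref{assu-kl}(ii), Proposition~\ref{prop-awb} gives both $d({\bm y}) = q^*({\bm y})$ for every ${\bm y}$ and the definition $E = {\rm span}({\rm dom}\,q)$, from which ${\rm dom}\,q \subseteq E$ is immediate.

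The key step is then the observation that for any ${\bm u}\in E^\perp$ and any ${\bm z}\in {\rm dom}\,q \subseteq E$ one has $\langle {\bm u},{\bm z}\rangle = 0$. Plugging this into the definition of the conjugate yields, for every ${\bm y}\in \R^m$,
\begin{align*}
d({\bm y}+{\bm u}) = q^*({\bm y}+{\bm u})
&= \sup_{{\bm z}\in {\rm dom}\,q}\{\langle {\bm y},{\bm z}\rangle + \langle {\bm u},{\bm z}\rangle - q({\bm z})\}\\
&= \sup_{{\bm z}\in {\rm dom}\,q}\{\langle {\bm y},{\bm z}\rangle - q({\bm z})\} = q^*({\bm y}) = d({\bm y}),
\end{align*}
which in particular gives the claim whenever ${\bm y}\in {\rm dom}\,d$.

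There is no substantive obstacle here once the identifications from Proposition~\ref{prop-awb} are invoked; the only point that needs a moment's care is that $E$ is a subspace \emph{containing} ${\rm dom}\,q$ (rather than merely its affine hull), which follows directly from the defining formula $E = {\rm span}({\rm dom}\,q)$ and the fact that $\bm 0 \in {\rm ri}\,{\rm dom}\,q$ established in Proposition~\ref{prop-awb}. Alternatively, as indicated by the authors, the very same invariance can be read off from Corollary~2.5.5 and Theorem~2.5.3 of \cite{Auslender-Alfred-Teboulle-2006}, which describe exactly the behavior of a conjugate function along the orthogonal complement of the span of its precursor's domain.
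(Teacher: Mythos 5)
Your proof is correct and follows essentially the same route as the paper: the paper deduces the invariance from $d=q^*$ by citing Corollary~2.5.5 and Theorem~2.5.3 of \cite{Auslender-Alfred-Teboulle-2006}, and you have simply unpacked that citation into the one-line conjugate computation using $\langle {\bm u},{\bm z}\rangle=0$ for ${\bm z}\in{\rm dom}\,q\subseteq E$. (One small remark: the inclusion ${\rm dom}\,q\subseteq{\rm span}({\rm dom}\,q)$ holds automatically for any set, so the appeal to ${\bm 0}\in{\rm ri}\,{\rm dom}\,q$ at the end is unnecessary, though harmless.)
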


\begin{theorem}[Uniformized KL property and growth condition]\label{the-ukl}
Consider \eqref{problem}.
Let the function $d$ be given in \eqref{problem-duald}.
Suppose that Assumption \ref{assu-kl} holds.
Then there exist positive constants $\epsilon$ and $c$ such that
\begin{align}
 &{\rm dist}({\bm y}, \Argmin d )\leq c\,(d({\bm y})-d^*)^{1-\frac{1}{1+\alpha}}\label{the-ukl-1}\\
 {and}\qquad&(d({\bm y})-d^*)^{\frac{1}{1+\alpha}}\leq  c\,{\rm dist}({\bm 0}, \partial d({\bm y}))\label{the-ukl-2}
\end{align}
whenever ${\bm y}\in Y_\epsilon:=\{{\bm y} : {\rm dist}({\bm y},  \Argmin d)\leq \epsilon, d^*\leq d({\bm y})\leq d^*+\epsilon\}$.
\end{theorem}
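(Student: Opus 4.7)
The plan is to combine Proposition~\ref{prop-kl} (which gives the pointwise KL property with exponent $\frac{1}{1+\alpha}$ at every ${\bm y}^\ast\in\Argmin d$) with \cite[Theorem~5]{Bolte-Nguyen-Peypouquet-Suter-2017} and the KL inequality in its original form, yielding, at each ${\bm y}^\ast$, local versions of \eqref{the-ukl-1} and \eqref{the-ukl-2}. To promote these to a uniform statement on the tube $Y_\epsilon$, the standard tool \cite[Lemma~6]{Bolte-Sabach-Teboulle-2014} requires a compact reference set; this fails here because Proposition~\ref{prop-awb} describes $\Argmin d=\partial q_E({\bm 0})+E^\perp$ with a potentially nontrivial subspace $E^\perp$. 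My plan is therefore to uniformize the pointwise estimates over the compact slice $M:=\partial q_E({\bm 0})\subseteq E$ and then transfer them along $E^\perp$ using the fact that both $d$ and $\partial d$ are invariant under such translations.

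Before the covering argument I would record three structural facts. First, $M$ is compact: ${\bm 0}\in{\rm ri}\,{\rm dom}\,q$ together with ${\rm dom}\,q\subseteq E$ (both from Proposition~\ref{prop-awb}) yields a small $\R^m$-ball at ${\bm 0}$ inside ${\rm dom}\,q+E^\perp={\rm dom}\,q_E$, placing ${\bm 0}$ in the interior of the domain of the proper convex function $q_E$; hence $\partial q_E({\bm 0})$ is nonempty, bounded, and closed, so compact. Second, Lemma~\ref{lemma-constant} extends globally on $\R^m$: if $d({\bm y})=+\infty$ while $d({\bm y}+{\bm u})<+\infty$ for some ${\bm u}\in E^\perp$, applying the lemma at ${\bm y}+{\bm u}$ with direction $-{\bm u}$ yields a contradiction, so $d({\bm y}+{\bm u})=d({\bm y})$ for every ${\bm y}\in\R^m$ and ${\bm u}\in E^\perp$; as an immediate consequence $\partial d({\bm y}+{\bm u})=\partial d({\bm y})$. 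Third, writing ${\bm y}_E:={\rm Proj}_E{\bm y}$ and using $M\subseteq E$ together with $\Argmin d=M+E^\perp$, a short orthogonal-decomposition computation gives ${\rm dist}({\bm y},\Argmin d)={\rm dist}({\bm y}_E,M)$.

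Now for each ${\bm z}\in M\subseteq\Argmin d$, the pointwise KL property supplies an open ball $B({\bm z},r_{\bm z})$ and constants $c_{\bm z},\eta_{\bm z}>0$ such that both \eqref{the-ukl-1} and \eqref{the-ukl-2} hold on $B({\bm z},r_{\bm z})$ whenever $d^*<d({\bm y})<d^*+\eta_{\bm z}$, with constant $c_{\bm z}$ (the case $d({\bm y})=d^*$ is trivial since both sides vanish). Compactness of $M$ produces a finite subcover $\{B({\bm z}_i,r_{{\bm z}_i}/2)\}_{i=1}^N$; setting $\epsilon_0:=\min_i\min\{r_{{\bm z}_i}/2,\eta_{{\bm z}_i}\}$ and $c:=\max_i c_{{\bm z}_i}$, the triangle inequality shows that any ${\bm w}$ with ${\rm dist}({\bm w},M)\leq\epsilon_0$ and $d({\bm w})\leq d^*+\epsilon_0$ lies in some $B({\bm z}_i,r_{{\bm z}_i})$, so both inequalities hold at ${\bm w}$ with the uniform constant $c$. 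Finally, for ${\bm y}\in Y_\epsilon$ with $\epsilon:=\epsilon_0$, the three structural facts give ${\rm dist}({\bm y}_E,M)={\rm dist}({\bm y},\Argmin d)\leq\epsilon$, $d({\bm y}_E)=d({\bm y})\in[d^*,d^*+\epsilon]$, and $\partial d({\bm y}_E)=\partial d({\bm y})$, so the uniform estimates at ${\bm y}_E$ pass verbatim to ${\bm y}$. The main obstacle is the structural step---particularly verifying compactness of $\partial q_E({\bm 0})$ and globalizing the $E^\perp$-invariance of $d$ and $\partial d$---after which the Lebesgue-number style cover argument is routine.
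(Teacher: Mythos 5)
Your proposal is correct and follows essentially the same route as the paper: both reduce the uniformization to the compact slice $\partial q_E({\bm 0})$ (compact because ${\bm 0}\in{\rm ri}\,{\rm dom}\,q$ gives ${\bm 0}\in{\rm int}\,{\rm dom}\,q_E$) and then translate along $E^\perp$ using the invariance of $d$ from Lemma~\ref{lemma-constant}. The only cosmetic deviations are that you re-derive the compact uniformization by a finite-cover argument instead of citing \cite[Lemma~6]{Bolte-Sabach-Teboulle-2014}, and you transfer \eqref{the-ukl-2} directly via the $E^\perp$-invariance of $\partial d$ (which does require the small check that subgradients of $d$ lie in $E$, so that the subgradient inequality is preserved under the translation), whereas the paper transfers only \eqref{the-ukl-1} and then obtains \eqref{the-ukl-2} from \eqref{VA}; both variants are sound.
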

\begin{proof}
Under the assumptions, by Proposition \ref{prop-kl}, we know that $d$ is a KL function with exponent $\frac{1}{1+\alpha}$.  In \cite[Lemma 2.1(a)]{Auslender-Cominetti-Crouziex-1993}, it was shown that
\begin{align*}
\text{${\rm dom}\, q_E={\rm dom}\, q+E^\perp$ with ${\rm int}({\rm dom}\, q_E)={\rm ri}\,{\rm dom}\, q+E^\perp$},
\end{align*}
{\color{blue}where $q_E$ and $q$ are defined in Proposition \ref{prop-awb}.}
Now, note that $\partial q_E({\bm 0})(\subseteq  \Argmin d)$ is nonempty and compact, since we have ${\bm 0}\in {\rm  int}\,({\rm dom}\, q_E)$ in view of the above display and the fact that ${\bm 0}\in {\rm ri}\,{\rm dom}\, q$ (see Proposition \ref{prop-awb}).
Then, using \cite[Lemma 6]{Bolte-Sabach-Teboulle-2014} and \cite[Theorem 5]{Bolte-Nguyen-Peypouquet-Suter-2017}, there exist positive constants $\epsilon$ and $c$ so that
\begin{align}\label{the-ukl-3}
{\rm dist}({\bm z}, \Argmin d)\leq c (d({\bm z})-d^*)^{1-\frac{1}{1+\alpha}},
\end{align}
whenever ${\bm z}\in \bar{Y}_\epsilon:=\{ {\bm z}: {\rm dist}({\bm z}, \partial q_E({\bm 0}))\leq \epsilon, d^*\leq d({\bm z})\leq d^*+\epsilon\}$. Next, we show  that
\begin{align}\label{the-ukl-4}
Y_\epsilon=\bar{Y}_\epsilon+E^\perp.
\end{align}
First, recall \eqref{prop-awb-2}. For any ${\bm y}\in Y_\epsilon$, let ${\rm Proj}_{\Argmin d}({\bm y})=\hat{{\bm z}}+\hat{{\bm u}}$ with  $\hat{{\bm z}}\in \partial q_E({\bm 0})$ and $\hat{{\bm u}}\in E^\perp$.
Then
\[
{\rm dist}({\bm y}-\hat{{\bm u}}, \partial q_E({\bm 0})) \le \|({\bm y}-\hat{{\bm u}})-\hat{{\bm z}}\| = \|{\bm y}-{\rm Proj}_{ \Argmin d}{\color{blue}({\bm y})}\|\leq \epsilon;
\]
moreover, we also have $d({\bm y}-\hat{{\bm u}})=d({\bm y})$ thanks to Lemma \ref{lemma-constant}. From these we conclude that ${\bm y}-\hat{{\bm u}}\in \bar{Y}_\epsilon$.
Then  ${\bm y}=({\bm y}-\hat{{\bm u}})+\hat{{\bm u}}\in \bar{Y}_\epsilon+E^\perp$, which means $Y_\epsilon\subseteq\bar{Y}_\epsilon+E^\perp$.
To prove the converse inclusion, take any ${\bm z}+{\bm u}\in \bar{Y}_\epsilon+E^\perp$ with ${\bm z}\in\bar{Y}_\epsilon$ and ${\bm u}\in E^\perp$. Then it holds that
$d({\bm z}+{\bm u})=d({\bm z})$  thanks to Lemma \ref{lemma-constant} and
 \[
 {\rm dist}({\bm z}+{\bm u}, \Argmin d)\leq {\rm dist}({\bm z}+{\bm u},\partial q_E({\bm 0})+{\bm u})= {\rm dist}({\bm z},\partial q_E({\bm 0}))\leq \epsilon,
 \]
 where the first inequality follows from $\partial q_E({\bm 0})+{\bm u}\subseteq \Argmin d$ (see \eqref{prop-awb-2}). Then, $\bar{Y}_\epsilon+E^\perp\subseteq Y_\epsilon$ and
 relation \eqref{the-ukl-4} holds.

Now, for any ${\bm y}\in Y_\epsilon$, according to \eqref{the-ukl-4}, there exist ${\bm z}\in \bar{Y}_\epsilon$ and ${\bm u}\in E^{\perp}$   such that ${\bm y}={\bm z}+{\bm u}$.
 It then follows  that for any such ${\bm y}$,
\begin{align}
{\rm dist}({\bm y},  \Argmin d)&\overset{\rm (a)}={\rm dist}({\bm z}+{\bm u},  (\Argmin d)+{\bm u}) = {\rm dist}({\bm z}, \Argmin d)  \notag\\
&\overset{\rm (b)}\leq c (d({\bm z})-d^*)^{1-\frac{1}{1+\alpha}}\overset{\rm (c)}=c (d({\bm y})-d^*)^{1-\frac{1}{1+\alpha}}, \label{the-ukl-5}
\end{align}
where (a) follows from  $\Argmin d=(\Argmin d)+{\bm u}$ (which holds because $\Argmin d=\partial q_E({\bm 0})+E^\perp$ and ${\bm u}\in E^\perp$), (b) follows from \eqref{the-ukl-3}, and (c) follows from Lemma \ref{lemma-constant}.
This proves \eqref{the-ukl-1}. Finally, \eqref{the-ukl-1} together with \eqref{VA} implies \eqref{the-ukl-2}.
\end{proof}

Before ending this section, we demonstrate that the exponent in \eqref{the-ukl-1} is the ``best possible" under Assumption~\ref{assu-kl} by presenting an instance of BA-MSF \eqref{problem} with a path along which both sides of \eqref{the-ukl-1} vanish in the same order of magnitude.
{\color{blue}The following example which argues the tightness of the exponent is in line with the recent research on the study of tight error bounds: see the notions of exact modulus of the generalized concave KL property in \cite[Definition~6]{Wang-Wang-2022} and consistent error bound in \cite[Definition~3.1]{Liu-Lourenco-2022}.}
\begin{example}[Tightness of the exponent in \eqref{the-ukl-1}]\label{example-tight}
Consider 
\[
\min_{x\in\R^2}   {\color{blue}\frac{1}{2}}\|x-\bar{v}\|^2 \ \ {\rm s.t.}\ \  A_1 x\in C_1,
\]
where $\bar{v}=(2,0)$, $p\in(1,2]$,
$C_1=\{x\in \R^2: \|x\|_p\leq 1\}$, and
$A_1=\begin{bmatrix}
   1 & 0 \\
   0 & 0
 \end{bmatrix}$. Since $A_1^{-1}C_1 = \{x\in \R^2:\; A_1x \in C_1\} = [-1,1]\times \R$, it follows that $x^*=(1,0)$ is the unique solution.
Moreover, $C_1$ is $C^{1,\alpha}$-cone reducible with $\alpha = p-1$ in view of Example~\ref{c-1a-ball},
and it holds that $A_1^{-1}{\rm ri}\,C_1\neq \emptyset$ as $(0,0)\in ({\rm ri}\,C_1)\cap {\rm Range}(A_1)$.
Furthermore,
\begin{align*}
\begin{bmatrix}
  1 & 0
\end{bmatrix}^T=\bar{v}-x^*\in \big\{\begin{bmatrix}
  t & 0
\end{bmatrix}^T : t> 0\big\}={\rm ri}\, \mathcal{N}_{[-1,1]\times \R}(x^*) = {\rm ri}\, \mathcal{N}_{A_1^{-1} C_1}(x^*).
\end{align*}
Thus, Assumption \ref{assu-kl} is satisfied. Then Theorem~\ref{the-ukl} shows that \eqref{the-ukl-1} holds with exponent $1-\frac1{1+\alpha} = 1-\frac1p = \frac{p-1}{p}$, where the $d$ in \eqref{the-ukl-1} now takes the form
\[
d(y_1)= (1/2)\|A_1 y_1-\bar{v}\|^2- (1/2)\|\bar{v}\|^2+\|y_1\|_{\frac{p}{p-1}}.
\]
Next, in view of Proposition \ref{prop-primal-dual}, we have
$\begin{bmatrix}
  1 & 0
\end{bmatrix}^T=\bar{v}-x^*=A_1 \hat{y}_1$ whenever $\hat{y}_1 \in {\Argmin\, d}$. This implies that the first coordinate of $\hat y_1$ is $1$. Also, one can see from the definition of $d$ that the second coordinate of $\hat y_1$ is $0$. Thus, ${\Argmin\, d} = \{\begin{bmatrix}
  1 & 0
\end{bmatrix}^T\}$.

Now, let $y_1^\epsilon=(1,\epsilon)$ for $\epsilon \downarrow 0$. By direct computation, we obtain as $\epsilon\downarrow 0$ that
\begin{align*}
d(y_1^\epsilon)-d({\color{blue}\hat y_1})=(1+\epsilon^{\frac{p}{p-1}})^{\frac{p-1}{p}}-1=\Theta(\epsilon^{\frac{p}{p-1}})\quad{ and}\quad {\rm dist}\,(y_1^\epsilon, \Argmin d)=\Theta(\epsilon),
\end{align*}
showing that both sides of \eqref{the-ukl-1} vanish in the same order of magnitude along $y_1^\epsilon$.
\end{example}
{\color{blue}
\begin{remark}\label{remark-C1a}
When Assumption \ref{assu-kl} holds with $\alpha = 1$ in item (i) (the latter holds when each $C_i$ is $C^2$-cone reducible), Theorem \ref{the-ukl} asserts that the function $d$ given in \eqref{problem-duald} satisfies the quadratic growth condition; that is, \eqref{the-ukl-1} holds with exponent $1-\frac{1}{1+\alpha}=\frac{1}{2}$.
However, for the $p$-norm ball in Example \ref{c-1a-ball} with $p\in(1,2)$, in view of Example  \ref{example-tight}, $d$ satisfies \eqref{the-ukl-1} with a tight exponent $1-\frac{1}{\alpha+1}=\frac{p-1}{p}<\frac{1}{2}$.
Therefore, the $p$-norm ball in Example \ref{c-1a-ball} with $p\in(1,2)$ is $C^{1,\alpha}$-cone  reducible with $\alpha=p-1\in(0,1)$ but not $C^2$-cone reducible.
\end{remark}
}

\section{Convergence rate analysis}  \label{sec5}
In this section, we study the convergence rate of Algorithm~\ref{alg-Dykstra}. Recall from Theorem~\ref{the-dykstra-ori} that the (classical) Dykstra's projection algorithm is known to converge linearly when each $C_i$ is polyhedral. We will argue that the same conclusion holds for Algorithm~\ref{alg-Dykstra}. The proof technique will shed lights on how tools from Section~\ref{sec4} can be further developed to analyze convergence rate of Algorithm~\ref{alg-Dykstra} for a more general class of sets.

The analysis below relies on the following mapping $\mathcal{G}:\R^m\rightarrow \R^m$,
\begin{align}\label{G}
\mathcal{G}({\bm y}):={\bm y}-{\rm Prox}_{\sigma_D}({\bm y}-\nabla g({\bm y})),
\end{align}
where $g$ and  $D$ are given in \eqref{problem-duald} and \eqref{A-D}, respectively. Note that the ${\cal G}$ in \eqref{G} is instrumental in the framework of convergence rate analysis developed in \cite{Luo-Tseng-1993,Tseng-Yun-2009} for first-order methods.

\begin{theorem}[Linear convergence with polyhedral $C_i$]\label{the-rate-polyhedral}
Consider \eqref{problem} and let the function $d$ be given in \eqref{problem-duald}.
Suppose that each $C_i$ is polyhedral.  Then ${\Argmin d}\neq \emptyset$.
Moreover, if $\{x^t\}$, $\{{\bm y}^t\}$ are the sequences  generated by Algorithm \ref{alg-Dykstra}, then there exist  ${\bm y}^*\in  \Argmin d$, $r\in(0,1)$,  $a_0\in(0,1)$, $a_1>0$ and  positive integer $\bar{t}$ such that
\begin{align*}
\|x^t-x^*\| \leq a_1 a_0^t, \quad   d({\bm y}^{t+1})-d^*\leq r (d({\bm y}^{t})-d^*), \quad \|{\bm y}^t-{\bm y}^*\|\leq a_1 a_0^t, \quad \forall\, t\geq \bar{t},
\end{align*}
where $x^*$ is the unique solution of \eqref{problem}, and $d^*$ is given in \eqref{problem-duald}.
\end{theorem}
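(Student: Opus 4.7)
The plan is to argue existence of a dual minimizer, then invoke a Luo--Tseng type local error bound available in the polyhedral case to deduce linear convergence via the framework of \cite{Luo-Tseng-1993,Tseng-Yun-2009}, and finally to transfer the rate from $\{{\bm y}^t\}$ to $\{x^t\}$ via the identity $x^t = \bar v - {\bm A}^T {\bm y}^t$ established in \eqref{xtdefinition}.

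For the first step, since each $C_i$ is polyhedral, each $\sigma_{C_i}$ is a polyhedral convex function, so $\sigma_D$ is polyhedral convex and $d = g + \sigma_D$ is the sum of a convex quadratic and a polyhedral convex function. Strong duality (Proposition~\ref{prop-primal-dual}) gives $d^* = -f(x^*) > -\infty$, and the Frank--Wolfe theorem for a convex quadratic minimized over (an epigraph equivalent of) a polyhedron yields $\Argmin d \neq \emptyset$.

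For the linear rate, I would observe that Algorithm~\ref{alg-Dykstra} is, by construction, a cyclic (proximal) coordinate gradient descent method applied to $d$ with block-separable polyhedral regularizer $\sigma_D$ and quadratic smooth part $g$. Under this structure the Luo--Tseng local error bound holds, i.e., there exist $\tau,\epsilon>0$ with ${\rm dist}({\bm y},\Argmin d) \leq \tau\|\mathcal{G}({\bm y})\|$ whenever $\|\mathcal{G}({\bm y})\|\leq\epsilon$ and $d({\bm y})\leq d({\bm y}^0)$, where $\mathcal{G}$ is given in \eqref{G}. Combining this error bound with the sufficient-decrease estimate of Proposition~\ref{prop-descent}(ii) and the boundedness of $\|\mathcal{G}({\bm y}^t)\|$ along the iterates (which follows from Proposition~\ref{prop-descent}(iii)--(iv) together with Lipschitzness of $\nabla g$ and nonexpansiveness of $\mathrm{prox}_{\sigma_D}$), I would invoke the linear convergence theorem of \cite{Tseng-Yun-2009} for CGD to obtain both $d({\bm y}^{t+1})-d^*\leq r(d({\bm y}^t)-d^*)$ for some $r\in(0,1)$ and $\|{\bm y}^t-{\bm y}^*\|\leq a_1 a_0^t$ for some ${\bm y}^*\in\Argmin d$, $a_0\in(0,1)$ and $a_1>0$, valid for all sufficiently large $t$.

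The rate on $\{x^t\}$ then follows cheaply: from \eqref{xtdefinition} we have $x^t = \bar v - {\bm A}^T{\bm y}^t$, and applying Proposition~\ref{prop-primal-dual} to the limit ${\bm y}^*\in\Argmin d$ gives $x^* = \bar v - {\bm A}^T{\bm y}^*$, so
\[
\|x^t - x^*\| = \|{\bm A}^T({\bm y}^t-{\bm y}^*)\| \leq \|{\bm A}\|\cdot\|{\bm y}^t-{\bm y}^*\| \leq \|{\bm A}\|\,a_1 a_0^t,
\]
and absorbing $\|{\bm A}\|$ into a redefined constant gives the claimed inequality. The main obstacle is the careful verification that Algorithm~\ref{alg-Dykstra}'s iteration with proximal kernel $\gamma_i I - A_i A_i^T$ fits the hypotheses of the Tseng--Yun framework (in particular, the proximal kernel is only positive semidefinite, not strictly positive definite, so one must express the update via the composite step exactly as in \eqref{coordinate} and rely on the strongly convex majorant interpretation). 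Once that identification is made, the error bound and convergence conclusions follow as in the cited references, and I do not expect additional technical difficulties in either step beyond bookkeeping.
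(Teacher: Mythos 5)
Your proposal is correct and follows essentially the same route as the paper's proof: polyhedrality of the $\sigma_{C_i}$ makes $d$ piecewise linear-quadratic, Frank--Wolfe gives $\Argmin d\neq\emptyset$, the Luo--Tseng error bound from \cite{Tseng-Yun-2009} yields linear convergence of the dual iterates, and the rate transfers to $\{x^t\}$ via $x^t=\bar v-{\bm A}^T{\bm y}^t$. The identification issue you flag (fitting Algorithm~\ref{alg-Dykstra} into the Tseng--Yun framework) is exactly what the paper resolves, using Proposition~\ref{prop-descent}(i) to show the Armijo rule is satisfied with unit stepsize.
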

\begin{proof}
We first note from \cite[Proposition 8.29]{Rockafellar-Wets-2009} that $\sigma_{C_i}$ is a piecewise linear function for each $i$ and hence the function $d$ is a piecewise-linear-quadratic function.
Since $d$ is bounded below according to Proposition~\ref{prop-primal-dual}, invoking Frank-Wolfe theorem (see \cite[Theorem 2.8.1]{Cottle-Pang-Stone-1992}),
one can see that $ \Argmin d\neq \emptyset$. Combining this with \cite[Theorem 4]{Tseng-Yun-2009},
we assert that the following first-order error bound condition\footnote{This condition is also known as the Luo-Tseng error bound in the literature;
 see \cite{Luo-Tseng-1993, Tseng-Yun-2009, Li-Pong-2018, Yue-Zhou-So-2019}.} is satisfied: there exist positive constants $\epsilon_0$ and $c_0$ such that
\begin{equation}\label{LTeb}
{\rm dist}({\bm y}, \Argmin d)\leq c_0 \|\mathcal{G}({\bm y})\|\quad  \mbox{whenever $\|\mathcal{G}({\bm y})\|\leq \epsilon_0$ and $d({\bm y})\leq d^* + \epsilon_0$,}
\end{equation}
where ${\cal G}$ is given in \eqref{G}.
Then, the linear convergence results of $\{d({\bm y}^{t+1})\}$ and $\{{\bm y}^t\}$
follow from \cite[Theorem 2(b)]{Tseng-Yun-2009}.\footnote{One has to argue that the proximal CGD \eqref{initial-point} and \eqref{coordinate} is a special instance of the algorithm studied in \cite{Tseng-Yun-2009} so that \cite[Theorem 2(b)]{Tseng-Yun-2009} is applicable. In details, we see from Proposition~\ref{prop-descent}(i) that if one chooses $\sigma = 0.5$, $\gamma = 0$ and $\alpha^k_{\rm init} \equiv 1$ in the Armijo rule used in \cite{Tseng-Yun-2009} and consider the search direction $y_i^{t+1} - y_i^t$, then the Armijo rule will be satisfied with stepsize 1 so that $y_i^{t+1}$ will appear in the next iterate. Thus, the proximal CGD \eqref{initial-point} and \eqref{coordinate} coincides with the algorithm considered in \cite{Tseng-Yun-2009} with the aforementioned choices of $\sigma$, $\gamma$ and $\{\alpha^k_{\rm init}\}$.}
Moreover, from \eqref{xtdefinition} and Proposition~\ref{prop-primal-dual}, we have
 \begin{align}\label{the-rate-polyhedral-1}
\textstyle\|x^t-x^*\|=\left\|\bar{v}-\sum_{j=1}^\ell A_j^T y^t_j-\left(\bar{v}-\sum_{j=1}^\ell A_j^T y^*_j\right)\right\|\leq \|{\bm A}^T\| \|{\bm y}^t-{\bm y}^*\|.
\end{align}
The claimed convergence result of $\{x^t\}$ then follows immediately.
\end{proof}

Notice that in the above proof, the crucial ingredient is the first-order error bound condition \eqref{LTeb}, and it is known in \cite[Section~3]{Dmitriy-Lewis-2018} that this condition is intrinsically related to second-order growth condition. Below, leveraging the study of growth conditions in Section~\ref{sec4} for $C^{1,\alpha}$-cone reducible sets, we will develop an analogue of \eqref{LTeb} (with a general exponent on $\|{\cal G}({\bm y})\|$) to analyze the convergence rate of Algorithm~\ref{alg-Dykstra} when each $C_i$ is $C^{1,\alpha}$-cone reducible for some $\alpha\in (0,1]$.

\subsection{Convergence rate analysis for $C^{1,\alpha}$-cone reducible $C_i$}

We first derive a Luo-Tseng type error bound based on the study of growth conditions in Section~\ref{sec4}.
\begin{lemma}[Luo-Tseng type error bound]\label{lemma-Luo-Tseng}
Consider \eqref{problem}. Let $d^*$ and  the function $d$ be given in \eqref{problem-duald} and $\mathcal{G}$ be defined in \eqref{G}.
Suppose that Assumption \ref{assu-kl} holds. Then there exist $c > 0$ and $\epsilon>0$ such  that
\begin{align*}
{\rm dist}({\bm y}, \Argmin d)\leq c\|\mathcal{G}({\bm y})\|^\alpha \quad\text{whenever $\|{\cal G}(\bm y)\|\leq \epsilon$, $d^*\leq d({\bm y})\leq d^*+\epsilon$}.
\end{align*}
\end{lemma}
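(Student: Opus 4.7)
The plan is to combine the uniformized KL property of Theorem~\ref{the-ukl} with the standard derivation of Luo--Tseng-type error bounds from growth conditions on the objective, in the spirit of \cite[Section~3]{Dmitriy-Lewis-2018}. Throughout, let $L:=\|{\bm A}\|^2$ denote the Lipschitz constant of $\nabla g$, and set $\bm z:={\rm Prox}_{\sigma_D}(\bm y - \nabla g(\bm y))$, so that $\mathcal{G}(\bm y) = \bm y - \bm z$ by definition.

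The first step is to bound the quantities that appear on the right-hand side of \eqref{the-ukl-1}--\eqref{the-ukl-2} by $\|\mathcal{G}(\bm y)\|$. From the first-order optimality condition for the proximal subproblem defining $\bm z$, we have $\bm y - \nabla g(\bm y) - \bm z \in \partial \sigma_D(\bm z)$, and therefore
\[
\mathcal{G}(\bm y) - (\nabla g(\bm y) - \nabla g(\bm z)) \in \nabla g(\bm z) + \partial \sigma_D(\bm z) = \partial d(\bm z),
\]
so that ${\rm dist}(\bm 0, \partial d(\bm z)) \le (1+L)\|\mathcal{G}(\bm y)\|$. A short computation combining the descent lemma for $g$ with the optimality of $\bm z$ in the proximal subproblem also yields
\[
d(\bm z) \le d(\bm y) + \tfrac{\max\{0,\,L-1\}}{2}\|\mathcal{G}(\bm y)\|^2,
\]
so under the hypotheses of the lemma, $d(\bm z) - d^*$ can be made as small as desired by shrinking $\epsilon$.

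The main obstacle is to guarantee ${\bm z} \in Y_{\tilde{\epsilon}}$, i.e., to show ${\rm dist}(\bm z, \Argmin d) \le \tilde{\epsilon}$, so that Theorem~\ref{the-ukl} is applicable at $\bm z$; this cannot be extracted from the function-value bound on $d(\bm z)$ alone because $\Argmin d$ is typically unbounded. I would handle this by contradiction: if a sequence $\{\bm y^k\}$ satisfying the hypotheses with $\|\mathcal{G}(\bm y^k)\| \to 0$ produced $\bm z^k := \bm y^k - \mathcal{G}(\bm y^k)$ with ${\rm dist}(\bm z^k, \Argmin d) \not\to 0$, then the residual bound above would give ${\rm dist}(\bm 0, \partial d(\bm z^k)) \to 0$, so $\{\bm z^k\}$ is a stationary sequence of $d$. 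Since Assumption~\ref{assu-kl}(ii) is in force, Proposition~\ref{prop-awb} then yields ${\rm dist}(\bm z^k, \Argmin d) \to 0$, a contradiction.

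Once ${\bm z} \in Y_{\tilde{\epsilon}}$ is secured, \eqref{the-ukl-1} and \eqref{the-ukl-2} applied at $\bm z$ chain together to give
\[
{\rm dist}(\bm z, \Argmin d) \le c\,(d(\bm z) - d^*)^{\alpha/(1+\alpha)} \le c\bigl[c(1+L)\|\mathcal{G}(\bm y)\|\bigr]^{\alpha} = C\|\mathcal{G}(\bm y)\|^\alpha
\]
for some $C > 0$. The triangle inequality
\[
{\rm dist}(\bm y, \Argmin d) \le \|\bm y - \bm z\| + {\rm dist}(\bm z, \Argmin d) = \|\mathcal{G}(\bm y)\| + C\|\mathcal{G}(\bm y)\|^\alpha,
\]
followed by a further shrinkage of $\epsilon$ so that $\|\mathcal{G}(\bm y)\| \le \|\mathcal{G}(\bm y)\|^\alpha$ (possible since $\alpha \in (0,1]$), then delivers the claimed bound after merging constants.
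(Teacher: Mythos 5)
Your proof is correct, but it follows a somewhat different route from the paper's, and the comparison is instructive. The paper's proof uses the Moreau proximal point ${\rm prox}_d({\bm y})$ of the \emph{full} objective $d$ as the auxiliary point: it first combines \eqref{the-ukl-1} with \eqref{VA} to get ${\rm dist}(\cdot,\Argmin d)\le c_1\,{\rm dist}({\bm 0},\partial d(\cdot))^\alpha$ on $Y_\epsilon$, verifies ${\rm prox}_d({\bm y})\in Y_\epsilon$ essentially for free from the nonexpansiveness of ${\rm prox}_d$ and the definition of the Moreau envelope, applies the bound at ${\rm prox}_d({\bm y})$ using ${\bm y}-{\rm prox}_d({\bm y})\in\partial d({\rm prox}_d({\bm y}))$, and then invokes \cite[Theorem 3.5]{Dmitriy-Lewis-2018} to compare the prox residual $\|{\bm y}-{\rm prox}_d({\bm y})\|$ with $\|{\cal G}({\bm y})\|$. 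You instead work with the forward--backward point ${\bm z}={\bm y}-{\cal G}({\bm y})$ itself, which makes the residual comparison trivial (the subgradient residual at ${\bm z}$ is bounded by $(1+L)\|{\cal G}({\bm y})\|$ by a direct computation) and removes the dependence on the external prox-comparison theorem; you then chain \eqref{the-ukl-2} and \eqref{the-ukl-1} at ${\bm z}$, which is equivalent to the paper's use of \eqref{VA}. The price you pay is having to verify membership of ${\bm z}$ in $Y_{\tilde\epsilon}$ by hand, which you correctly split into the function-value part (the descent-type estimate $d({\bm z})\le d({\bm y})+O(\|{\cal G}({\bm y})\|^2)$) and the distance part (the AWB contradiction via Proposition~\ref{prop-awb}); the paper needs the same AWB argument, but only once at the very end, to convert the hypothesis $\|{\cal G}({\bm y})\|\le\epsilon$ into ${\rm dist}({\bm y},\Argmin d)\le\epsilon$. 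Both arguments are sound; yours is more self-contained, the paper's delegates more work to cited results.
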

\begin{proof}
By Theorem~\ref{the-ukl}, there exist $\epsilon\in (0,{\color{blue}1/2})$ and $c > 0$ such that  for any  ${\bm y}\in Y_\epsilon$
\[
{\rm dist}({\bm y}, \Argmin d)\leq c(d({\bm y})-d^*)^{1-\frac{1}{1+\alpha}}\leq c\,{\rm dist}({\bm 0},\partial d({\bm y}))^{\frac{\alpha}{1+\alpha}}{\rm dist}({\bm y}, \Argmin d)^{\frac{\alpha}{1+\alpha}},
\]
where the second inequality follows from \eqref{VA}.
This further implies that
\begin{align}\label{lemma-Luo-Tseng-2}
{\rm dist}({\bm y}, \Argmin d)\leq c_1{\rm dist}({\bm 0},\partial d({\bm y}))^{\alpha}\ \ \mbox{for all }{\bm y}\in Y_\epsilon
\end{align}
where $c_1 := c^{1+\alpha}$.
Next, for any ${\bm y}\in Y_\epsilon$, we have\vspace{-0.2 cm}
\begin{equation*}
\left\{
\begin{aligned}
d^*\leq d({\rm prox}_d({\bm y})) &\le d({\rm prox}_d({\bm y})) + {\color{blue}\frac{1}{2}}\|{\bm y} - {\rm prox}_d({\bm y})\|^2 \\
&= \inf\limits_{\bm u\in \R^m} \{d({\bm u}) + {\color{blue}\frac{1}{2}}\|{\bm y} - {\bm u}\|^2\}\leq d({\bm y})\leq d^*+\epsilon,\\
{\rm dist}({\rm prox}_d({\bm y}),  \Argmin d)&\le \|{\rm prox}_d({\bm y})-\bar{\bm y}\| = \|{\rm prox}_d({\bm y})\!-\!{\rm prox}_d(\bar{\bm y})\|\\
&\overset{\rm (a)}\leq \|{\bm y}-\bar{\bm y}\|= {\rm dist}({\bm y}, \Argmin d) \leq \epsilon,
\end{aligned}
\right.
\end{equation*}
where $\bar{\bm y}:={\rm Proj}_{\Argmin d}({\bm y})$, (a) holds because the proximal operator is nonexpansive.
The above display implies that ${\rm prox}_d({\bm y})\in Y_\epsilon$.
Similarly, for any ${\bm y}\in Y_\epsilon$, it holds that
\begin{align}\label{lemma-Luo-Tseng-3}
\begin{split}
\|{\bm y}-{\rm prox}_d({\bm y})\|&\leq \|{\bm y}-\bar{\bm y}\|+\|{\rm prox}_d({\bm y})-\bar{\bm y}\| \leq 2\|{\bm y}-\bar{\bm y}\|\le 2\epsilon < 1.
\end{split}
\end{align}
Then we deduce for any ${\bm y}\in Y_\epsilon$ that
\begin{align*}
&{\rm dist}({\bm y}, \Argmin d)\leq \|{\bm y}-{\rm prox}_d({\bm y})\|+{\rm dist}({\rm prox}_d({\bm y}), \Argmin d) \notag \\
& \!\overset{{\rm (a)}}\leq\!  \|{\bm y}-{\rm prox}_d({\bm y})\|+ c_1{\rm dist}({\bm 0},\partial d({\rm prox}_d({\bm y})))^{\alpha}  \!\overset{{\rm (b)}}\leq\!  \|{\bm y}-{\rm prox}_d({\bm y})\|+ c_1\|{\bm y}-{\rm prox}_d({\bm y})\|^{\alpha}  \notag\\
&\overset{{\rm (c)}}\leq (1+c_1)\|{\bm y}-{\rm prox}_d({\bm y})\|^{\alpha} \overset{{\rm (d)}}\leq (1+c_1) c_2 \|\mathcal{G}({\bm y})\|^\alpha,  
\end{align*}
where (a) follows from \eqref{lemma-Luo-Tseng-2} and the fact that ${\rm prox}_d({\bm y})\in Y_\epsilon$, (b) follows from the fact that ${\bm y}-{\rm prox}_d({\bm y})\in \partial d({\rm prox}_d({\bm y}))$,
(c) holds as $\|{\bm y}-{\rm prox}_d({\bm y})\|< 1$ (thanks to \eqref{lemma-Luo-Tseng-3}) and $\alpha\in (0,1]$,
and (d) holds for some $c_2>0$ according to \cite[Theorem 3.5]{Dmitriy-Lewis-2018}.

We can now conclude that there exists $c_3 > 0$ such that ${\rm dist}({\bm y}, \Argmin d)\leq c_3\|\mathcal{G}({\bm y})\|^\alpha$ whenever ${\bm y}\in Y_\epsilon$. To complete the proof, it suffices to show that there exists $\nu \in (0,\epsilon)$ such that
 \[
 \{ {\bm y} : \|\mathcal{G}({\bm y})\|\leq \nu\}\subseteq \{ {\bm y} : {\rm dist}({\bm y}, \Argmin d)\leq \epsilon\}.
 \]
Suppose to the contrary that this is not true. Then there exists $\{{\bm z}^t\}$ such that ${\rm dist}({\bm z}^t, \Argmin d)> \epsilon$ for all $t$ and $\mathcal{G}({\bm z}^t)\to {\bm 0}$.
In view of the latter limit and \cite[Theorem 3.5]{Dmitriy-Lewis-2018}, we see that $\|{\bm z}^t - {\rm prox}_d({\bm z}^t)\|\to 0$.
Since ${\bm z}^t-{\rm prox}_d({\bm z}^t)\in \partial d({\rm prox}_d({\bm z}^t))$, we then deduce further that ${\rm dist}({\bm 0},\partial d({\rm prox}_d({\bm z}^t)))\to 0$,
and consequently ${\rm dist}({\rm prox}_d({\bm z}^t), \Argmin d)\to 0$ in view of Proposition~\ref{prop-awb}. We are now led to the following contradiction:
\[
\epsilon< {\rm dist}({\bm z}^t, \Argmin d)\le \|{\bm z}^t-{\rm prox}_d({\bm z}^t)\|+{\rm dist}({\rm prox}_d({\bm z}^t), \Argmin d)\to 0.
\]
This completes the proof.
\end{proof}

The following theorem establishes the convergence rate of  Algorithm \ref{alg-Dykstra}. The steps for deriving upper bounds on $\|{\cal G}({\bm y}^t)\|$ (see \eqref{the-rate-1-4}) and $d({\bm y}^{t+1}) - d^*$ (see \eqref{the-rate-2}) follow a similar argument as in
the proof of \cite[Theorem 2]{Tseng-Yun-2009}.
\begin{theorem}[convergence rate with $C^{1,\alpha}$-cone reducible $C_i$]\label{the-rate}
Consider \eqref{problem} and suppose that Assumption \ref{assu-kl} holds. Let $d$ be the function given in \eqref{problem-duald} and $\{x^t\}$, $\{{\bm y}^t\}$ be  the sequences  generated by Algorithm \ref{alg-Dykstra}.
Then it holds that $x^t\rightarrow x^*$ and ${\rm dist}({\bm y}^t, \Argmin d)\to 0$, where $x^*$ is the unique solution of \eqref{problem}. {\color{blue}Moreover, the following statements hold:}
\begin{enumerate}[{\rm (i)}]
  \item If each $C_i$ is $C^{1,1}$-cone reducible, then there exist ${\bm y}^*\in  \Argmin d$, {\color{blue}$a_1>0$, $a_0\in(0,1)$ and a positive integer $\bar{t}$} such that for any $t\geq \bar{t}$,
\begin{align*}
\|x^t-x^*\|\leq a_1 a_0^t,\quad \|{\bm y}^t-{\bm y}^*\|\leq a_1 a_0^t.
\end{align*}
  \item If each  $C_i$ is $C^{1,\alpha}$-cone reducible with $\alpha\in(0,1)$, then {\color{blue}there exist $a_1>0$ and a positive integer $\bar{t}$ such that}  for any $t\geq \bar{t}$,
{\color{blue}
\begin{align*}
&\|x^t-x^*\|\leq a_1 t^{-\frac{\alpha}{2}\cdot\frac{1-\theta}{2\theta-1}},\\
 & d({\bm y}^t)-d^*\leq a_1 t^{-\frac{1-\theta}{2\theta-1}},\\
  & {\rm dist}({\bm y}^t,\Argmin d)\leq a_1 t^{-\frac{\alpha}{2}\cdot\frac{1-\theta}{2\theta-1}},
\end{align*}
}
where $d^*$ is given in \eqref{problem-duald} {\color{blue}and $\theta:=\frac{1}{1+\alpha}\in(\frac{1}{2},1)$ is the KL exponent of $d$.}
\end{enumerate}
\end{theorem}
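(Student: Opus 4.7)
The plan is to run a KL-based rate analysis adapted to our coordinate descent setting, with special care because the dual iterates may be unbounded. First I would dispose of the qualitative statements. Theorem~\ref{the-convergence-awb} already yields $d({\bm y}^t)\to d^*$ and ${\rm dist}({\bm y}^t,\Argmin d)\to 0$ under Assumption~\ref{assu-kl}(ii). To transfer convergence to $\{x^t\}$ I invoke the identity \eqref{xtdefinition} together with Proposition~\ref{prop-primal-dual}: the latter says $\bar v-{\bm A}^T{\bm y}^*=x^*$ for every ${\bm y}^*\in\Argmin d$, so ${\bm A}^T$ is constant on $\Argmin d$ with value $\bar v-x^*$, and writing ${\bm y}^*_t:={\rm Proj}_{\Argmin d}({\bm y}^t)$ yields $\|x^t-x^*\|=\|{\bm A}^T({\bm y}^t-{\bm y}^*_t)\|\le\|{\bm A}^T\|\,{\rm dist}({\bm y}^t,\Argmin d)\to 0$.

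Next I assemble three ingredients. A \emph{relative error} estimate at ${\bm y}^{t+1}$: the subgradient manipulation in the proof of Proposition~\ref{prop-descent}(iv), together with the (global) Lipschitz continuity of $\nabla g$, produces a subgradient in $\partial d({\bm y}^{t+1})$ of norm at most $C_1\|{\bm y}^{t+1}-{\bm y}^t\|$, hence ${\rm dist}({\bm 0},\partial d({\bm y}^{t+1}))\le C_1\|{\bm y}^{t+1}-{\bm y}^t\|$; an analogous comparison between the full proximal step defining $\mathcal{G}$ and the cyclic coordinate updates, using nonexpansiveness of ${\rm prox}_{\sigma_D}$, gives $\|\mathcal{G}({\bm y}^{t+1})\|\le C_2\|{\bm y}^{t+1}-{\bm y}^t\|$. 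A \emph{descent} estimate: Proposition~\ref{prop-descent}(ii) yields $\Delta_t-\Delta_{t+1}\ge c_0\|{\bm y}^{t+1}-{\bm y}^t\|^2$ with $\Delta_t:=d({\bm y}^t)-d^*$. A \emph{KL inequality}: since ${\bm y}^{t+1}\in Y_\epsilon$ for all large $t$ (by the qualitative convergence), \eqref{the-ukl-2} gives $\Delta_{t+1}^\theta\le c\,{\rm dist}({\bm 0},\partial d({\bm y}^{t+1}))$ with $\theta:=1/(1+\alpha)$.

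Chaining these three produces the master recurrence $\Delta_{t+1}^{2\theta}\le C_3(\Delta_t-\Delta_{t+1})$ for all sufficiently large $t$. In case (i) ($\alpha=1$, $\theta=1/2$, $2\theta=1$) this is a Q-linear contraction $\Delta_{t+1}\le r\Delta_t$ with some $r\in(0,1)$; the descent bound then forces $\|{\bm y}^{t+1}-{\bm y}^t\|=O(r^{t/2})$, so $\{{\bm y}^t\}$ is Cauchy and converges to some ${\bm y}^*\in\Argmin d$ at linear rate, and $\|x^t-x^*\|\le\|{\bm A}^T\|\|{\bm y}^t-{\bm y}^*\|$ inherits that rate. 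In case (ii) ($\alpha\in(0,1)$, $2\theta>1$) I apply a standard Chung/Attouch--Bolte--Svaiter discrete recursion lemma to the master recurrence to extract the claimed sublinear rate for $\Delta_t$; the distance rate is then obtained by chaining the Luo--Tseng bound of Lemma~\ref{lemma-Luo-Tseng} with the two bounds on $\|{\bm y}^{t+1}-{\bm y}^t\|$:
\[
{\rm dist}({\bm y}^{t+1},\Argmin d)\le c\|\mathcal{G}({\bm y}^{t+1})\|^\alpha\le C'\|{\bm y}^{t+1}-{\bm y}^t\|^\alpha\le C''(\Delta_t-\Delta_{t+1})^{\alpha/2},
\]
which produces the $\alpha/2$ adjustment to the function-value exponent; the bound on $\|x^t-x^*\|$ then follows from \eqref{xtdefinition} exactly as in case (i).

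The main obstacle is precisely the potential \emph{unboundedness} of $\{{\bm y}^t\}$: classical KL-based rate analyses localize to a single accumulation point and implicitly require the sequence to be bounded, but here $\Argmin d$ itself may be unbounded and $\{{\bm y}^t\}$ need not converge in the sublinear regime. The decisive tool is the uniformized KL property and growth condition of Theorem~\ref{the-ukl}, which holds on the neighborhood $Y_\epsilon$ of the \emph{entire} (possibly unbounded) $\Argmin d$ thanks to Lemma~\ref{lemma-constant}; this permits applying the KL inequality to each ${\bm y}^{t+1}$ for all large $t$ without first extracting a bounded subsequence or localizing to a single limit point. A secondary technical point is that Lemma~\ref{lemma-Luo-Tseng} only delivers the Luo--Tseng error bound with exponent $\alpha\in(0,1]$ rather than $1$, and propagating this through the recurrence is what produces the extra $\alpha/2$ factor in the distance and iterate rates.
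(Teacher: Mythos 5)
Your proposal is correct and establishes the theorem, but for case (ii) it reaches the function-value rate by a genuinely different route than the paper. The paper does \emph{not} run the three-ingredient KL chain you describe; instead it bounds $d({\bm y}^{t+1})-d^*$ directly by a mean-value-theorem expansion of $g$ combined with the coordinate optimality conditions (its \eqref{the-rate-2}), then inserts the Luo--Tseng bound ${\rm dist}({\bm y}^t,\Argmin d)\le \tilde c\|{\bm y}^{t+1}-{\bm y}^t\|^{\alpha}$ to obtain $d({\bm y}^{t+1})-d^*\le c_1\|{\bm y}^{t+1}-{\bm y}^t\|^{2\alpha}$, which with the sufficient decrease gives the recursion $\Delta_{t+1}^{\theta/(1-\theta)}\le C(\Delta_t-\Delta_{t+1})$, i.e.\ exponent $1/\alpha$, and hence the stated rate $t^{-\frac{1-\theta}{2\theta-1}}$. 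Your chain --- uniformized KL inequality \eqref{the-ukl-2} at ${\bm y}^{t+1}$, the subgradient residual bound ${\rm dist}({\bm 0},\partial d({\bm y}^{t+1}))\le C_1\|{\bm y}^{t+1}-{\bm y}^t\|$ from the coordinate optimality conditions, and the sufficient decrease --- produces instead $\Delta_{t+1}^{2\theta}\le C_3(\Delta_t-\Delta_{t+1})$ with $2\theta=\tfrac{2}{1+\alpha}<\tfrac{1}{\alpha}$ when $\alpha<1$, which yields the \emph{sharper} rate $\Delta_t=O(t^{-\frac{1}{2\theta-1}})$. This is stronger than, and therefore implies, the stated bound $t^{-\frac{1-\theta}{2\theta-1}}$; but be explicit about this, since as written you claim to ``extract the claimed sublinear rate'' from a recurrence whose natural output has a different exponent. (The paper's own remark after the theorem acknowledges that its rate is off from the standard KL rate by the factor $1-\theta$; your observation that the uniformized KL property on $Y_\epsilon$ makes the standard function-value recursion available without any boundedness of $\{{\bm y}^t\}$ is exactly what closes that gap.) Everything else --- the two coincident routes in case (i) where $2\theta=1/\alpha=1$, the reduction $\|x^t-x^*\|\le\|{\bm A}^T\|\,{\rm dist}({\bm y}^t,\Argmin d)$ via \eqref{xtdefinition} and Proposition~\ref{prop-primal-dual}, the Cauchy argument for iterate convergence when $\alpha=1$, and the distance rate obtained by chaining Lemma~\ref{lemma-Luo-Tseng} with $\|\mathcal{G}({\bm y}^{t+1})\|\le{\rm dist}({\bm 0},\partial d({\bm y}^{t+1}))\le C_1\|{\bm y}^{t+1}-{\bm y}^t\|$ and the descent inequality --- matches the paper's argument, and the extra $\alpha/2$ in the distance and iterate exponents arises in both treatments from the H\"older exponent of the Luo--Tseng bound, exactly as you say.
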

\begin{proof}
The conclusion that ${\rm dist}({\bm y}^t, \Argmin d)\to 0$ follows immediately from Theorem~\ref{the-convergence-awb}.
Recall from \eqref{A-D} that ${\bm A}^T=[A_1^T\ \cdots\ A_\ell^T]$.
Then the convergence of $\{x^t\}$ to $x^*$ can be deduced by noticing
\begin{align} \label{the-rate-0-1}
\textstyle \|x^t-x^*\|\!\overset{\rm (a)}=\!\left\|\bar{v}-\sum_{j=1}^\ell A_j^T y^t_j\!-\!\left(\bar{v}-\sum_{j=1}^\ell A_j^T \bar{y}^t_j\right)\right\|\!\leq\! \|{\bm A}^T\|{\rm dist}({\bm y}^{t}, \Argmin d),
\end{align}
where ${\rm Proj}_{ \Argmin d}({\bm y}^t)=: (\bar{y}^t_1,\ldots, \bar{y}^t_\ell)$ and (a) follows from Proposition \ref{prop-primal-dual} and \eqref{xtdefinition}.

Recall from \eqref{A-D} that  $D=C_1\times \cdots \times C_\ell$.
Define
\begin{align*}
\mathcal{G}_i^\gamma({\bm y}):=y_i-{\rm Prox}_{\gamma^{-1}\sigma_{C_i}}(y_i-\gamma^{-1}\nabla_{y_i} g({\bm y})), \quad i=1,\ldots, \ell,
\end{align*}
where $g$ is as in \eqref{problem-duald}.
Then we can write $\mathcal{G}({\bm y})=(\mathcal{G}^1_1({\bm y}), \ldots, \mathcal{G}^1_\ell({\bm y}))$ and
\eqref{coordinate} becomes
\begin{align}\label{the-rate-0}
y^{t+1}_i={\rm Prox}_{\gamma_i^{-1}\sigma_{C_i}}(y^t_i-\gamma_i^{-1}\nabla_{y_i} g(\tilde{{\bm y}}^{t+1}_{i-1})) = y^t_i-\mathcal{G}_i^{\gamma_i}(\tilde{{\bm y}}^{t+1}_{i-1}),
\end{align}
where  $\tilde{{\bm y}}^{t+1}_{i-1}$ is as in \eqref{y-tilde}.
Let $\hat{y}^{t+1}_i:=y^t_i-\mathcal{G}_i^1(\tilde{{\bm y}}^{t+1}_{i-1}) = {\rm Prox}_{\sigma_{C_i}}(y^t_i-\nabla_{y_i} g(\tilde{{\bm y}}^{t+1}_{i-1}))$.
By the definition of the proximal operator, we see that $\hat{y}^{t+1}_i$ and  $y^{t+1}_i$  satisfy  the respective first-order optimality conditions:
\begin{align*}
&0 \in \nabla_{y_i}g(\tilde{{\bm y}}^{t+1}_{i-1})+ (\hat{y}^{t+1}_i-y^{t}_i)+\partial \sigma_{C_i}(\hat{y}^{t+1}_i), \\
& 0 \in \nabla_{y_i}g(\tilde{{\bm y}}^{t+1}_{i-1})+\gamma_i (y^{t+1}_i-y^{t}_i)+\partial \sigma_{C_i}(y^{t+1}_i),
\end{align*}
 which implies that
\begin{align*}
&\hat{y}^{t+1}_i\in \Argmin_{z\in \R^{m_i}} \langle\nabla_{y_i}g(\tilde{{\bm y}}^{t+1}_{i-1})+ (\hat{y}^{t+1}_i-y^{t}_i),z\rangle+ \sigma_{C_i}(z),\\
&{y}^{t+1}_i\in \Argmin_{z\in \R^{m_i}} \langle\nabla_{y_i}g(\tilde{{\bm y}}^{t+1}_{i-1})+\gamma_i ({y}^{t+1}_i-y^{t}_i),z\rangle+ \sigma_{C_i}(z).
\end{align*}
Then, it holds that for any $z\in \R^{m_i}$
\begin{align}
&\langle\nabla_{y_i}g(\tilde{{\bm y}}^{t+1}_{i-1})+(\hat{y}^{t+1}_i-y^t_i),\hat{y}^{t+1}_i-z\rangle+\sigma_{C_i}(\hat{y}^{t+1}_i)-\sigma_{C_i}(z)\leq 0, \label{the-rate-1}\\
&\langle\nabla_{y_i}g(\tilde{{\bm y}}^{t+1}_{i-1})+\gamma_i(y^{t+1}_i-y^t_i),y^{t+1}_i-z\rangle+\sigma_{C_i}(y^{t+1}_i)-\sigma_{C_i}(z)\leq 0. \label{the-rate-1-1}
\end{align}
Substituting $z=y^{t+1}_i$ and $z=\hat{y}^{t+1}_i$ into \eqref{the-rate-1} and \eqref{the-rate-1-1} respectively and summing them together, we obtain upon recalling \eqref{the-rate-0} and the fact $\hat{y}^{t+1}_i-y^t_i= -\mathcal{G}_i^1(\tilde{{\bm y}}^{t+1}_{i-1})$ that
\[
\langle\mathcal{G}_i^1(\tilde{{\bm y}}^{t+1}_{i-1}),\mathcal{G}_i^1(\tilde{{\bm y}}^{t+1}_{i-1})-\mathcal{G}^{\gamma_i}_i(\tilde{{\bm y}}^{t+1}_{i-1})\rangle
+\gamma_i\langle\mathcal{G}_i^{\gamma_i}(\tilde{{\bm y}}^{t+1}_{i-1}),\mathcal{G}_i^{\gamma_i}(\tilde{{\bm y}}^{t+1}_{i-1})-\mathcal{G}^{1}_i(\tilde{{\bm y}}^{t+1}_{i-1})\rangle\leq 0,
\]
which gives $\|\mathcal{G}_i^1(\tilde{{\bm y}}^{t+1}_{i-1})\|^2 + \gamma_i\|\mathcal{G}_i^{\gamma_i}(\tilde{{\bm y}}^{t+1}_{i-1})\|^2\le (1+\gamma_i) \langle\mathcal{G}_i^{\gamma_i}(\tilde{{\bm y}}^{t+1}_{i-1}),\mathcal{G}_i^1(\tilde{{\bm y}}^{t+1}_{i-1})\rangle$, and hence
\begin{align}\label{the-rate-1-2}
\|\mathcal{G}_i^1(\tilde{{\bm y}}^{t+1}_{i-1})\|\leq (1+\gamma_i)\|\mathcal{G}_i^{\gamma_i}(\tilde{{\bm y}}^{t+1}_{i-1})\|=(1+\gamma_i)\|y^{t+1}_i-y^t_i\|,
\end{align}
where the last equality follows from \eqref{the-rate-0}.
Recall that ${\bm y}^{t}=({y}^{t}_1,\ldots,y^{t}_\ell)$. Then 
\begin{align}
&\|\mathcal{G}_i^1({\bm y}^t)-\mathcal{G}_i^1(\tilde{\bm y}^{t+1}_{i-1})\|\notag\\
=&\| y^{t}_i-{\rm prox}_{\sigma_{C_i}}(y^{t}_i-\nabla_{y_i}g({\bm y}^{t}))-(\tilde{\bm y}^{t+1}_{i-1})_i+{\rm prox}_{\sigma_{C_i}}((\tilde{\bm y}^{t+1}_{i-1})_i-\nabla_{y_i}g(\tilde{\bm y}^{t+1}_{i-1}))   \| \notag\\
\leq & L_g \|{\bm y}^{t+1}-{\bm y}^t\|, \label{the-rate-1-3}
\end{align}
where the inequality follows from the fact that $(\tilde{\bm y}^{t+1}_{i-1})_i=y^t_i$, the nonexpansiveness of the proximal operator, and the Lipschitz continuity of $\nabla g$ with Lipschitz modulus $L_g:=\|{\bm A}^T{\bm A}\|$.
We then deduce further that
\begin{align}
\|\mathcal{G}({\bm y}^t)\|&\textstyle\leq \sum_{i=1}^\ell \|\mathcal{G}^1_i({\bm y}^t)\|\leq \sum_{i=1}^\ell \left(\|\mathcal{G}^1_i({\bm y}^t)-\mathcal{G}^1_i(\tilde{\bm y}^{t+1}_{i-1})\|+\|\mathcal{G}^1_i(\tilde{\bm y}^{t+1}_{i-1})\|\right)\notag\\
&\textstyle\overset{\rm (a)}\leq \sum_{i=1}^\ell \left(L_g\|{\bm y}^{t+1}-{\bm y}^t\|+(1+\gamma_i)\|y^{t+1}_i-y^t_i\| \right) \le \hat M\|{\bm y}^{t+1}-{\bm y}^t\|,   \label{the-rate-1-4}
\end{align}
where (a) follows from \eqref{the-rate-1-2} and  \eqref{the-rate-1-3}, and $\hat M := \ell L_g + (\ell+\sum_{i=1}^\ell\gamma_i)\sqrt{\ell}$.

Next, let $\bar{{\bm y}}^t=(\bar{y}^t_1,\ldots, \bar{y}^t_\ell):={\rm Proj}_{ \Argmin d}({\bm y}^t)$. Then we see that
\begin{align}
&d({\bm y}^{t+1})-d^* \overset{{\rm (a)}}= g({\bm y}^{t+1})+\sigma_{D}({\bm y}^{t+1})-g(\bar{{\bm y}}^t)-\sigma_{D}(\bar{{\bm y}}^t)\notag \\
\overset{{\rm (b)}}=&\textstyle\langle\nabla g({\bm u}^t),{\bm y}^{t+1}-\bar{{\bm y}}^t\rangle +\sum_{i=1}^\ell [\sigma_{C_i}({y}^{t+1}_i)-\sigma_{C_i}(\bar{y}^{t}_i)]\notag\\
=&\textstyle\langle\nabla g({\bm u}^t)-\nabla g({\bm y}^t),{\bm y}^{t+1}-\bar{{\bm y}}^t\rangle+ \sum_{i=1}^\ell \langle\nabla_{y_i} g({\bm y}^t)-\nabla_{y_i}g(\tilde{{\bm y}}^{t+1}_{i-1}),{y}^{t+1}_i-\bar{y}_i^t\rangle\notag\\
&\textstyle+\sum_{i=1}^\ell [\langle\nabla_{y_i}g(\tilde{{\bm y}}^{t+1}_{i-1})+\gamma_i({y}^{t+1}_i-{y}^{t}_i),{y}^{t+1}_i-\bar{y}^{t}_i\rangle+\sigma_{C_i}({y}^{t+1}_i)-\sigma_{C_i}(\bar{y}^{t}_i)]\notag\\
&\textstyle-\sum_{i=1}^\ell \gamma_i \langle{y}^{t+1}_i-{y}^{t}_i,{y}^{t+1}_i-\bar{y}^{t}_i\rangle\notag \\
\overset{{\rm (c)}}\leq& L_g\|{\bm u}^t-{\bm y}^t\|\|{\bm y}^{t+1}-\bar{{\bm y}}^t\|+{\ell L_g}\|{\bm y}^{t+1}-{\bm y}^t\| \|{\bm y}^{t+1}-\bar{{\bm y}}^t\|\notag\\
&+\gamma_{\rm sum}\|{\bm y}^{t+1}-{\bm y}^t\| \|{\bm y}^{t+1}-\bar{{\bm y}}^t\| \notag\\
\leq & [(L_g+{\ell L_g}+\gamma_{\rm sum}) \|{\bm y}^{t+1}-{\bm y}^t\|+L_g\|{\bm y}^{t}-\bar{{\bm y}}^t\|](\|{\bm y}^{t+1}-{\bm y}^t\|+\|{\bm y}^t-\bar{{\bm y}}^t\|), \label{the-rate-2}
\end{align}
where (a) follows from the definition of $d$ in \eqref{problem-duald}, (b) follows from the mean value theorem with ${\bm u}^t=\tau_t {\bm y}^{t+1}+(1-\tau_t)\bar{{\bm y}}^t$ for some $\tau_t\in(0,1)$,
(c) follows from \eqref{the-rate-1-1} and the Lipschitz continuity of $\nabla g$ with $\gamma_{\rm sum}:=\sum_{i=1}^\ell\gamma_i$.

On the other hand, we have $\|\mathcal{G}({\bm y}^t)\|\to 0$  (thanks to \eqref{the-rate-1-4} and Proposition \ref{prop-descent}(iii)) and     $d({\bm y}^t)\rightarrow d^*$ (thanks to Theorem~\ref{the-convergence-awb}).
Thus, there exists sufficiently large positive integer $\bar{t}$ such that
$\|\mathcal{G}({\bm y}^t)\|\leq {\epsilon}$ and $d({\bm y}^t)\leq d^*+{\epsilon}$ for any $t\geq \bar{t}$, where ${\epsilon}$ is specified in Lemma~\ref{lemma-Luo-Tseng}. Then, it holds that for any $t\ge \bar{t}$,
\begin{align}\label{the-rate-3}
\|{\bm y}^t-\bar{{\bm y}}^t\|\overset{\rm (a)}={\rm dist}({\bm y}^t, \Argmin d)\overset{\rm (b)}\leq {c}\|\mathcal{G}({\bm y}^t)\|^\alpha\overset{\rm (c)}\leq \tilde{c}\|{\bm y}^{t+1}-{\bm y}^t\|^\alpha,
\end{align}
where (a) holds as $\bar{{\bm y}}^t={\rm Proj}_{ \Argmin d}({\bm y}^t)$, (b) follows from Lemma~\ref{lemma-Luo-Tseng},   (c) holds for  $\tilde{c}:=c\hat M^\alpha$ thanks to \eqref{the-rate-1-4}.
Now, since $\|{\bm y}^{t+1}-{\bm y}^t\|\to 0$ (see Proposition \ref{prop-descent}(iii)) and $\alpha\in(0,1]$, we deduce from \eqref{the-rate-2} and \eqref{the-rate-3} that there exists $c_1>0$ such that
\begin{align}\label{the-rate-4}
d({\bm y}^{t+1})-d^*\leq c_1\|{\bm y}^{t+1}-{\bm y}^t\|^{2\alpha} \ \ \mbox{for all }t\ge \bar t.
\end{align}
In addition, Proposition \ref{prop-descent}(ii) shows that for all $t$,
\begin{align}\label{the-rate-5}
d({\bm y}^{t+1})-d({\bm y}^t)\leq  -c_2 \|{\bm y}^{t+1}-{\bm y}^{t}\|^2,
\end{align}
for some $c_2>0$.
Then, combining the above two displays, we have for all  $t\geq \bar{t}$,
\[
(d({\bm y}^{t+1})-d^*)-(d({\bm y}^t)-d^*)\leq   -c_2 \|{\bm y}^{t+1}-{\bm y}^{t}\|^2 \leq -c_1^{-1/\alpha} c_2 (d({\bm y}^{t+1})-d^*)^{1/\alpha},
\]
which implies that
\begin{align}\label{the-rate-6}
(d({\bm y}^{t+1})-d^*)+c_1^{-1/\alpha} c_2 (d({\bm y}^{t+1})-d^*)^{1/\alpha}\leq d({\bm y}^t)-d^* \ \ \mbox{for all }t\ge \bar t.
\end{align}
Next, we consider the cases $\alpha=1$ and $\alpha\in(0,1)$ separately.

{\bf Case 1}: $\alpha=1$. In this case, from \eqref{the-rate-6} we have  for  $t\geq \bar{t}$
\[
\textstyle d({\bm y}^{t+1})-d^*\leq \frac{1}{1+c_1^{-1}c_2} (d({\bm y}^t)-d^*),
\]
which implies that $\{d({\bm y}^{t+1})-d^*\}$ is Q-linearly convergent to zero. The last inequality together with \eqref{the-rate-5} shows that
\[
\|{\bm y}^{t+1}-{\bm y}^{t}\|^2\leq c_2^{-1} [(d({\bm y}^t)-d^*)-(d({\bm y}^{t+1})-d^*)]\leq c^{-1}_2 (d({\bm y}^t)-d^*),
\]
which together with the Q-linearly convergence (to zero) of $\{d({\bm y}^{t})-d^*\}$ implies that there exists $c_3>0, \alpha_0\in(0,1)$ such that
$
\|{\bm y}^{t+1}-{\bm y}^{t}\|\leq c_3 \alpha_0^t
$.
Thus, we have
\[
\textstyle\|{\bm y}^{t_1}-{\bm y}^{t_2}\|\leq \sum_{j=t_1}^{t_2-1}\|{\bm y}^{j+1}-{\bm y}^{j}\|\leq \frac{c_3 }{1-\alpha_0}\alpha_0^{t_1}\ \ \mbox{for all }t_2>t_1\geq \bar{t},
\]
which means that $\{{\bm y}^t\}$ is convergent. Let ${\bm y}^*$ denote its limit.
Then ${\bm y}^*\in  \Argmin d$ by Theorem \ref{the-convergence-awb}.
Passing to the limit as $t_2\rightarrow \infty$ in the above display, we obtain
\[
\|{\bm y}^{t_1}-{\bm y}^*\|\leq (c_3 \alpha_0^{t_1})/(1-\alpha_0).
\]
Using this, \eqref{xtdefinition} and Proposition~\ref{prop-primal-dual}, we deduce further that
for $t\geq \bar{t}$
\begin{align}\label{the-rate-6-1}
\textstyle\|x^t-x^*\|=\left\|\bar{v}-\sum_{j=1}^\ell A_j^T y^t_j-\left(\bar{v}-\sum_{j=1}^\ell A_j^T y^*_j\right)\right\|\leq \|{\bm A}^T\|\frac{c_3 }{1-\alpha_0} \alpha_0^{t},
\end{align}
which implies the linear convergence of $\{x^t\}$ to $x^*$.

{\bf Case 2}: $\alpha\in(0,1)$. We first show that the sequence $\{d({\bm y}^t)-d^*\}$  is sublinearly convergent to zero.
{\color{blue}To this end, recalling that  $\theta=\frac{1}{\alpha+1}$,  \eqref{the-rate-6} can be written as
\begin{align}
 (d({\bm y}^{t+1})-d^*)^{\frac{\theta}{1-\theta}}\leq c_2^{-1}c_1^{\frac{\theta}{1-\theta}} [(d({\bm y}^t)-d^*)-(d({\bm y}^{t+1})-d^*)] \ \ \mbox{for all }t\ge \bar t.
\end{align}
Then, following the same arguments in \cite[Theorem 2]{Attouch-Bolte-2009} starting from \cite[Equation (13)]{Attouch-Bolte-2009}, we can deduce that there exists   $c_4>0$ such that
\begin{align}\label{the-rate-8}
 d({\bm y}^t)-d^*\leq c_4 t^{-\frac{1-\theta}{2\theta-1}}  \ \ \mbox{for all }t\ge \bar t;
\end{align}
where the last inequality corresponds to the first inequality on \cite[Page 15]{Attouch-Bolte-2009}.
}

For the sublinear convergence of $\{{\rm dist}({\bm y}^{t}, \Argmin d)\}$,
we see that for any $t\geq \bar{t}$
\begin{align}
c_2\tilde{c}^{-\frac{2}{\alpha}}  {\rm dist}({\bm y}^{t}, \Argmin d)^{\frac{2}{\alpha}}&\overset{{\rm (a)}}\leq c_2\tilde{c}^{-\frac{2}{\alpha}}(\tilde{c} \|{\bm y}^{t+1}-{\bm y}^{t}\|^\alpha)^{2/\alpha}=
 c_2 \|{\bm y}^{t+1}-{\bm y}^t\|^2 \notag\\
&\overset{{\rm (b)}}\leq  d({\bm y}^{t})-d({\bm y}^{t+1})\leq d({\bm y}^{t})-d^*,   \label{the-rate-9}
\end{align}
where (a)  follows from \eqref{the-rate-3} and (b) follows from \eqref{the-rate-5}.
Thus, the sublinear convergence rate of $\{{\rm dist}({\bm y}^{t}, \Argmin d)\}$ follows from \eqref{the-rate-8}.
Finally,  \eqref{the-rate-0-1} implies that the convergence rate of $\{x^t\}$ can be obtained from that of $\{{\rm dist}({\bm y}^{t}, \Argmin d)\}$.
\end{proof}
{\color{blue}
\begin{remark}
We reiterate that the convergence framework based on the KL property (see, e.g., \cite[Theorem 1]{Attouch-Bolte-2009}) is not directly applicable here because $\{{\bm y}^t\}$ may not have accumulation points. On the other hand, if $\{{\bm y}^t\}$ is bounded, then the convergence rate for $\{d({\bm y}^t) - d^*\}$ in Theorem~\ref{the-rate}(ii) is different by a factor of $1-\theta$ from the standard rate derived from the KL-based analysis; see, for example, \cite[Theorem~4.1(iv)]{Garr-Rosa-Villa-2023}.
\end{remark}
}

Theorem \ref{the-rate}(i) shows linear convergence rate of Algorithm \ref{alg-Dykstra} under a genericity assumption on $\bar{v}$, i.e., $0\in x^*-\bar{v}+ {\rm ri}\, \partial (\sum_{i=1}^\ell \delta_{A_i^{-1}C_i })(x^*)$.
To see that this condition is indispensable, we give an example that satisfies all  assumptions  in Theorem~\ref{the-rate}(i) except for Assumption~\ref{assu-kl}(iii), and linear convergence fails.

\begin{example}[Linear convergence fails]\label{example}
Consider the following problem
\begin{align}\label{example-1}
\textstyle \min\limits_{y_1,y_2\in\R^3} d(y_1,y_2):=  \textstyle \frac{1}{2}\|y_1+y_2-\bar{v}\|^2+\sigma_{C_1}(y_1)+\sigma_{C_2}(y_2),
\end{align}
where $\bar{v}=(1, -1,1)$ and
\[
C_1:=\{(x_1,x_2,x_3)\in \R^3: x_3\leq -\|(x_1,x_2)\|\},\ \ \
C_2:=\{(0,x_2,x_3)\in \R^3: x_2, x_3\in\R\}.
\]
 Note that \eqref{example-1} is the negative of the Lagrange dual (up to an additive constant) of the BA problem
$
\min_{x\in C_1\cap C_2} \frac12\|x - \bar {v}\|^2.
$
By the definition of $C_1$ and $C_2$, one can see that $C_1$ and $C_2$ are $C^{1,1}$-cone reducible (see also Example~\ref{c-1a-cone}) and ${\rm ri}\, C_1\cap{\rm ri}\, C_2\neq \emptyset$.

Notice that $C_1=K_1^\circ$ and $C_2 = K_2^\circ$, where
\begin{align*}
K_1:=\{(x_1,x_2,x_3)\in \R^3: x_3\geq \|(x_1,x_2)\|\},\quad  K_2:=\{ (x_1,0,0) \in \R^3: x_1\in\R\}.
\end{align*}
Then $\sigma_{C_1}=\delta_{K_1}$ and $\sigma_{C_2}=\delta_{K_2}$ according to \cite[Example 11.4(b)]{Rockafellar-Wets-2009}, and \eqref{example-1} can be equivalently written as
\begin{align}\label{mind}
\textstyle\min\limits_{y_1,y_2\in\R^3} d(y_1,y_2) =  \textstyle \frac{1}{2}\|y_1+y_2-\bar{v}\|^2+\delta_{K_1}(y_1)+\delta_{K_2}(y_2).
\end{align}
One can verify that the optimal value of the above problem is zero, and
 \[
 y_1^*=(0,-1,1)\in K_1 \ \ {\rm and}\ \  y_2^*=(1, 0,0)\in K_2
 \]
is the unique solution of \eqref{example-1}, i.e., $ \Argmin d=\{(y_1^*,y_2^*)\}$, and $x^*=(0,0,0)$ is the unique solution of the primal problem in view of Proposition~\ref{prop-primal-dual}. Moreover, we have
\[
0\notin x^*-\bar{v}+{\rm ri}\, \partial (\delta_{ C_1}+\delta_{ C_2})(x^*),
\]
since ${\rm ri}\, \partial (\delta_{ C_1}+\delta_{ C_2})(x^*)={\rm ri}\,\mathcal{N}_{C_1\cap C_2}(x^*)\subseteq \mathcal{N}_{C_1\cap C_2}(x^*)= \{(x_1, x_2,x_3) : x_3\geq |x_2|\}$.

Recall that the $y$-iterates in Algorithm~\ref{alg-Dykstra} can be obtained by applying CGD to \eqref{mind}, with the initial points $y_1^0=y_2^0=(0,0,0)$. By induction,
the $y$-iterates generated by Algorithm~\ref{alg-Dykstra} can be written as\footnote{The projection of $(u,t)\in \R^{n+1}$ onto $K_1$ can be found in \cite[Exercise~29.11]{Bauschke-Combettes-2017}. }
\begin{align}\label{example-2}
\begin{cases}
 & y_1^{t+1}={\rm Proj}_{K_1}(\bar v - y_2^t)
  = \left( a_{t+1},-\frac{1}{2}\left(1+\frac{1}{\sqrt{a_t^2+1}}\right), \frac{1}{2}\left(1+\sqrt{a_t^2+1}\right) \right),   \\
 & y_2^{t+1}={\rm Proj}_{K_2}(\bar v - y_1^{t+1})=(1-a_{t+1},0,0),   \\
 & \text{with $a_{t+1}=\frac{1}{2}\left( 1+\frac{1}{\sqrt{a_t^2+1}}\right)a_t$ and $a_0=1$}. 
\end{cases}
\end{align}
Observing from above that $\{(y_1^t,y_2^t)\}$  is convergent since $\{a_t\}$ is bounded and nonincreasing,
 we know that $(y_1^t, y_2^t)\rightarrow (y_1^*,y_2^*)$ by Proposition \ref{prop-descent}(v). This implies that $\lim_{t\rightarrow \infty}a_t= 0$. From the definition of $a_t$ in \eqref{example-2} and noting that $a_t>0$, we have
\begin{align*}
\frac{1}{a^2_{t+1}}\!-\!\frac{1}{a^2_{t}}\!=\!\frac{3a_t^2+2-2\sqrt{a_t^2+1}}{a_t^2(1+\sqrt{a_t^2+1})^2}\!=\!
\frac{3a_t^2+2-2(1+0.5a_t^2+O(a_t^4))}{a_t^2(1+\sqrt{a_t^2+1})^2}\!=\!\frac{2+O(a_t^2)}{(1+\sqrt{a_t^2+1})^2}.
\end{align*}
 Summing  both sides of the above equality from $t=0$ to $t=N$, we get
$\frac{1}{a^2_{N}}-\frac{1}{a^2_{0}}=\Theta(N),$
which implies that
$
a_{t}= \Theta(1/\sqrt{t})
$.
Then, using this relation, \eqref{example-2} and the fact $\sqrt{h^2+1}-1=\Theta(h^2)$ as $h\rightarrow 0$, we deduce that
${\rm dist}((y_1^{t+1},y_2^{t+1}),  \Argmin d)^2
=\|(y_1^{t+1},y_2^{t+1})-(y_1^*,y_2^*)\|^2$ is equal to
\begin{align*}
\textstyle 2a_{t+1}^2+\left(\frac{\sqrt{a_t^2+1}-1}{2\sqrt{a_t^2+1}}\right)^2+\left( \frac{\sqrt{a_t^2+1}-1}{2}\right)^2
=2a_{t+1}^2 +\Theta(a_t^4)=\Theta \left(\frac{1}{{t+1}}\right),
\end{align*}
which means that the convergence rate of $\{(y_1^t, y_2^t)\}$ to $(y_1^*, y_2^*)$ is not linear.
\end{example}

\appendix
\section{Equivalence between Algorithm \ref{alg-Dykstra} and a proximal CGD scheme}\label{app-derivation}
We first derive Algorithm \ref{alg-Dykstra} from \eqref{initial-point} and \eqref{coordinate}.
To this end, note that \eqref{coordinate} is equivalent to
\begin{align}
 y^{t+1}_i =\, {\rm prox}_{\gamma_i^{-1}\sigma_{C_i}}(y^t_i-\gamma_i^{-1}\nabla_{y_i} g(\tilde{{\bm y}}^{t+1}_{i-1})).  \label{app-derivation-2}
 \end{align}
Define $x^0_\ell := \bar v$ and, for $i = 0,1,\ldots,\ell$,
\begin{align}\label{app-derivation-2-1}
\textstyle x^{t+1}_{i}:=\bar{v}-\sum_{j=1}^i  A_j^T y^{t+1}_{j}-\sum_{j=i+1}^{\ell} A_j^T y^{t}_j.
\end{align}
Then according to \eqref{app-derivation-2-1} and the definition of $\tilde{{\bm y}}^{t+1}_{i-1}$ in \eqref{y-tilde}, it holds that for $i = 1,\ldots,\ell$,
\begin{align}
A_ix^{t+1}_{i-1}&=-\nabla_{y_{i}} g(\tilde{{\bm y}}^{t+1}_{i-1}), \label{app-derivation-3}\\
x^{t+1}_{i}&=x^{t+1}_{i-1}-A_i^T  (y^{t+1}_i-y^{t}_i).\label{app-derivation-4-2}
\end{align}
Consequently,
 relation \eqref{app-derivation-2} can be further rewritten as
\begin{align}
y^{t+1}_i&\overset{\rm (a)}=y^t_i-\gamma_i^{-1}\nabla_{y_i} g(\tilde{{\bm y}}^{t+1}_{i-1})-\gamma_i^{-1}{\rm Proj}_{C_i}(\gamma_i y^t_i-\nabla_{y_i} g(\tilde{{\bm y}}^{t+1}_{i-1})) \label{app-derivation-3.9}\\
&\overset{\rm  \eqref{app-derivation-3}}=y^t_i+\gamma_i^{-1}A_ix^{t+1}_{i-1}-\gamma_i^{-1}{\rm Proj}_{C_i}(\gamma_i y^t_i+ A_ix^{t+1}_{i-1}),\label{app-derivation-4}
\end{align}
where (a) holds since for any closed convex set $K\subseteq \R^n$, $r>0$ and $u\in \R^n$, we have
$
{\rm prox}_{r\sigma_K}(u)
=u-{\rm prox}_{(\sigma_{rK})^*}(u)=u-{\rm Proj}_{rK}(u)=u-r{\rm Proj}_{K}(r^{-1}u)
$.
Then, using \eqref{app-derivation-4-2} and the expression of $y^{t+1}_i - y^{t}_i$ derived from \eqref{app-derivation-4}, we have for $i=1,\ldots,\ell$,
\begin{align}
x^{t+1}_{i}&=(I-\gamma_i^{-1}A_i^T A_i )x^{t+1}_{i-1}+ \gamma_i^{-1}A_i^T {\rm Proj}_{C_i}(\gamma_i y^t_i+A_ix^{t+1}_{i-1}).\label{app-derivation-5}
\end{align}
Thus, we have shown that \eqref{coordinate} gives \eqref{app-derivation-4} and \eqref{app-derivation-5}, i.e., \eqref{alg-1} in Algorithm~\ref{alg-Dykstra}.
Also, from \eqref{initial-point}, \eqref{app-derivation-2-1} and the definition that $x^0_\ell := \bar v$, we see that
\begin{align}\label{app-derivation-5-1}
\textstyle x^{t+1}_0=\bar{v}-\sum_{j=1}^\ell A_j^T y_j^{t} = x^t_\ell\ \ \mbox{for all }t\geq 0.
\end{align}
Combining \eqref{app-derivation-4},  \eqref{app-derivation-5}  and  \eqref{app-derivation-5-1},  we derive  Algorithm \ref{alg-Dykstra}.

Conversely, we show that \eqref{initial-point} and \eqref{coordinate} can be deduced from Algorithm~\ref{alg-Dykstra}. Thanks to Step 1 of Algorithm~\ref{alg-Dykstra}, it suffices to prove \eqref{app-derivation-2}. To this end, let $\{x^t\}$, $\{x_i^t\}$ and $\{y_i^t\}$, $i = 1,\ldots,\ell$, be generated by Algorithm \ref{alg-Dykstra}. Then we obtain \eqref{app-derivation-4-2} from \eqref{alg-1}.
We claim that \eqref{xtdefinition} holds by induction. First, it clearly holds for $t = 0$ from Step 1 of Algorithm~\ref{alg-Dykstra}. Suppose that $x^t_\ell = \bar v - {\bm A}^T {\bm y}^t$ for some $t\ge 0$. Then the beginning of Step 2 of Algorithm~\ref{alg-Dykstra} shows that $x_0^{t+1} = \bar v - {\bm A}^T {\bm y}^t$, which together with \eqref{app-derivation-4-2} shows that $x^{t+1}_\ell = \bar v - {\bm A}^T {\bm y}^{t+1}$, thus establishes \eqref{xtdefinition} by induction.

Now, \eqref{xtdefinition}, \eqref{app-derivation-4-2} and $x_0^{t+1} = x^t_\ell$ (Step 2 of Algorithm~\ref{alg-Dykstra}) give \eqref{app-derivation-2-1}, and hence \eqref{app-derivation-3}. The $y$-update in \eqref{alg-1} then shows that \eqref{app-derivation-3.9} holds, which is just \eqref{app-derivation-2}.


\bibliographystyle{siamplain}
\bibliography{references_siam}
\end{document}